\numberwithin{equation}{section}
\newtheoremstyle{fancy1}{10pt}{10pt}{\itshape}{12pt}{\textsc\bgroup}{.\egroup}{8pt}{
}
\newtheoremstyle{fancy2}{10pt}{10pt}{}{12pt}{\itshape}{.}{8pt}{ }
\theoremstyle{fancy1}
\newtheorem{lem}[equation]{Lemma}
\newtheorem{prop}[equation]{Proposition}
\newtheorem{thm}[equation]{Theorem}
\newtheorem{main}{Theorem}
\newtheorem*{main*}{Theorem}
\newtheorem*{conjecture}{Conjecture}
\newtheorem*{cor*}{Corollary}
\theoremstyle{fancy2}
\newtheorem{definition}[equation]{Definition}
\newtheorem{rem}[equation]{Remark}
\newtheorem*{rem*}{Remark}
\newtheorem{example}[equation]{Example}
\newcommand{\cref}[1]{Corollary~\ref{#1}}
\newcommand{\lref}[1]{Lemma~\ref{#1}}
\newcommand{\tref}[1]{Theorem~\ref{#1}}
\newcommand{\Sph}{\mathbb{S}}
\newcommand{\Disc}{\mathbb{D}}
\newcommand{\Rho}{\mathsf{R}}
\newcommand{\Q}{\mathsf{Q}}
\newcommand{\N}{\mathsf{N}}
\newcommand{\e}{\epsilon}
\newcommand{\RP}{\mathbb{R\mkern1mu P}}
\newcommand{\CP}{\mathbb{C\mkern1mu P}}
\newcommand{\HP}{\mathbb{H\mkern1mu P}}
\newcommand{\R}{{\mathbb{R}}}
\newcommand{\Z}{{\mathbb{Z}}}
\renewcommand{\H}{\ensuremath{\operatorname{\mathsf{H}}}}
\newcommand{\F}{\ensuremath{\operatorname{\mathsf{F}}}}
\newcommand{\G}{\ensuremath{\operatorname{\mathsf{G}}}}
\newcommand{\D}{\ensuremath{\operatorname{\mathsf{D}}}}
\newcommand{\SO}{\ensuremath{\operatorname{\mathsf{SO}}}}
\renewcommand{\O}{\ensuremath{\operatorname{\mathsf{O}}}}
\newcommand{\Sp}{\ensuremath{\operatorname{\mathsf{Sp}}}}
\newcommand{\U}{\ensuremath{\operatorname{\mathsf{U}}}}
\newcommand{\SU}{\ensuremath{\operatorname{\mathsf{SU}}}}
\newcommand{\M}{\ensuremath{\operatorname{\mathsf{M}}}}
\newcommand{\T}{\ensuremath{\operatorname{\mathsf{T}}}}
\renewcommand{\S}{\ensuremath{\operatorname{\mathsf{S}}}}
\renewcommand{\r}{\ensuremath{\operatorname{\mathsf{r}}}}
\newcommand{\fg}{{\mathfrak{g}}}
\newcommand{\fk}{{\mathfrak{k}}}
\newcommand{\fa}{{\mathfrak{a}}}
\def\con#1=#2(#3){#1 \equiv #2 \bmod{#3}}
\newcommand{\Aut}{\ensuremath{\operatorname{Aut}}}
\newcommand{\Int}{\ensuremath{\operatorname{Int}}}
\renewcommand{\Im}{\ensuremath{\operatorname{Im}}}
\DeclareMathOperator{\Iso}{Iso}
\DeclareMathOperator{\Fix}{Fix}
\DeclareMathOperator{\Id}{Id}
\newcommand{\Kpm}{\mathsf{K}^{\scriptscriptstyle{\pm}}}
\newcommand{\Kp}{\mathsf{K}^{\scriptscriptstyle{+}}}
\newcommand{\Km}{\mathsf{K}^{\scriptscriptstyle{-}}}
\newcommand{\Ko}{\mathsf{K}_{\scriptscriptstyle{0}}}
\newcommand{\K}{\mathsf{K}}
\renewcommand{\L}{\mathsf{L}}
\renewcommand{\F}{\mathsf{F}}
\begin{document}

\title{Polar Manifolds and Actions}

\dedicatory{Dedicated to Richard Palais on his 80th birthday}

\author{Karsten Grove}
\address{University of Notre Dame\\
       Notre Dame, IN 46556}
\email{kgrove2@nd.edu}

\author{Wolfgang Ziller}
\address{University of Pennsylvania\\
       Philadelphia, PA 19104}
\email{wziller@math.upenn.edu}

\thanks{Both authors were supported by  a grant from the
National Science Foundation. The second author was also supported by
CAPES and IMPA in Rio de Janeiro}

\maketitle


Many geometrically interesting objects arise from spaces supporting
 large transformation groups. For example symmetric spaces form a
rich and important class of spaces. More generally special
 geometries such as, e.g., Einstein manifolds, manifolds of positive curvature, etc., are often
  first found and possibly classified in the homogeneous case \cite{Be,Wa,AW,Bb,Wan,BWZ,La}. Similarly, interesting
   minimal submanifolds have been found and constructed as invariant submanifolds relative to a
   large isometry group (see , e.g., \cite{HHS} and \cite{HL}).

The ``almost homogeneous" case, i.e., manifolds  with an isometric
$\G$ action with one dimensional
 orbit space, has also received a lot of attention recently. They are
 special in that one can reconstruct the manifold from the knowledge
 of its isotropy groups. This played a crucial role in the construction of a large class of new
 manifolds with nonnegative curvature in \cite{GZ1, GZ2}, in the classification problem for
 positively curved manifolds of cohomogeneity one in \cite{V1,V2, GWZ, VZ}, as well as in the
  construction of a new manifold of positive curvature \cite{De, GVZ}.

\smallskip

Our main objective in this paper is to exhibit and analyze $\G$
manifolds $M$, that like
 manifolds with orbit space an interval, can be reconstructed from its isotropy groups.
 Although such manifolds are rather rigid compared with all $\G$ manifolds, there are several
 natural constructions of such manifolds, and they form a rich and interesting class, including
 manifolds related to Coxeter groups and Tits buildings as observed and used in \cite{FGT}.

\smallskip

The special actions of interest to us here are the so-called
\emph{polar actions}, i.e., proper isometric actions for which there
is an (immersed) submanifold
 $\sigma : \Sigma \to M$, a so-called \emph{section},
that meets all orbits orthogonally, or equivalently  \emph{the
horizontal distribution} (on the regular part) \emph{is integrable}. This concept was pioneered by Szenthe in
\cite{Sz1,Sz2} and independently
 by Palais and Terng in  \cite{PT1}.  In fact
 Palais and Terng observed that this is the natural class of group
 actions where the classical Chevalley restriction theorem for
 the adjoint action of a compact Lie group
 holds, i.e., a $\G$ invariant function on $M$ is smooth
 if and only if its restriction to a section is smooth (and invariant under the stabilizer of $\Sigma$). This is also
 a
 natural class of group actions where a reduction to a potentially
 simpler lower dimensional problem along a smooth section is
 possible since in general the smooth structure of the quotient
 $M/\G$ is too complicated to do analysis effectively. This has already
 been used
 successfully in the cohomogeneity one case where solutions to a PDE
 were obtained via a reduction to an ODE, see, e.g., \cite{Bom,Co,Cs,CGLP}   where
 manifolds with special holonomy, Einstein metrics, Sasakian Einstein metrics and soliton
 metrics were produced.

\smallskip

   The most basic examples of polar actions are the
adjoint action of a compact Lie group on itself, or on its Lie
algebra. More generally, the isotropy representation of a symmetric
space $M$, either on $M$, or on $T_pM$, is polar. In fact Dadok
showed that a  linear representation which is polar is (up to orbit
equivalence) the isotropy representation of a symmetric space
\cite{Da}.  Polar actions on symmetric spaces (isometric with
respect to the symmetric metric) have been studied extensively, see,
e.g., \cite{HPTT1, HPTT2,HLO}. They were classified for  compact
rank one symmetric spaces in \cite{PTh} and for compact irreducible
symmetric spaces of higher rank it was shown \cite{Ko2,Ko3,KL,L}
 that a polar action must be hyperpolar, i.e., the section is flat,
and the hyperpolar actions were classified in \cite{Ko1}. For
non-compact symmetric actions the classification is still wide open,
see, e.g., \cite{BD,BDT}.

\smallskip

To describe the data involved in the reconstruction of a polar $\G$
manifold $M$, let $\sigma: \Sigma \to M$ be a section and $\Pi$ its
 stabilizer group (mod kernel), which we refer to as the \emph{polar group}. Then
 $\Sigma$ is a totally geodesic immersed submanifold of $M$ and $\Pi$ a discrete group of
 isometries acting properly discontinuously on $\Sigma$.
  Moreover $M/\G = \Sigma/\Pi$. The polar group contains a canonical
   general type \emph{``reflection group"} $\Rho \triangleleft \Pi$ with ``fundamental domain" a
    \emph{chamber}, $C \subset \Sigma$, locally the union of convex subsets of $\Sigma$
    (in the atypical case where $\Rho$ is trivial $C$ is all of $\Sigma$) .
    Furthermore, we have the subgroup of the polar group  $\Pi_C\subset\Pi$ which stabilizes
    $C$ with $\Pi=\Rho\cdot\Pi_C$ and $C/\Pi_C=M/\G$.

 The most important polar actions are those for which there are no exceptional orbits and $\Pi_C$ is
   trivial. We will refer to them as  {\it
    Coxeter polar actions}. For such actions, $C$ is in particular a convex set isometric to $M/\G$.

\smallskip

     The general data  needed for the reconstruction of a Coxeter polar action are as follows:
      The isotropy groups along $C$, one for each component of the orbit
     types. They
     satisfy
       \emph{compatibility relations} coming from the
     slice
     representations, which themselves are polar. They can be organized in a \emph{graph of groups}
      denoted by $\G(C)$.
      This group graph generalizes the
      concept of a group
       diagram for cohomogeneity one manifolds  \cite{GZ1,AA1}. The general \emph{polar data}
      $D$ needed are $D = (C, \G(C))$.
The following is  our  main result:

\begin{main} A Coxeter polar action $(M,\G)$ is determined by its polar data $D = (C,\G(C))$.
Specifically, there is a canonical construction of a Coxeter
polar $\G$ manifold $M(D)$ from polar data $D = (C,\G(C)$, and
if $D$ is the polar data for a Coxeter polar
 $\G$ manifold $M$ then $M(D)$ is equivariantly diffeomorphic to
 $M$.
\end{main}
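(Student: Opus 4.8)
The plan is to prove both halves of the statement simultaneously, by induction on the cohomogeneity $k=\dim C$. The cases $k=0$ (a homogeneous space is recovered from a single isotropy group) and $k=1$ (the reconstruction of a cohomogeneity one manifold from its group diagram, \cite{GZ1}) furnish the base of the induction.

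\emph{Construction of $M(D)$.} Since the action is Coxeter polar, $C$ is a convex subset of the section, stratified by the faces belonging to the various orbit types: the open stratum $C^{\circ}$ carries the principal isotropy $H$, and each open face $\tau$ carries an isotropy group $\K_{\tau}\supseteq H$ prescribed by $\G(C)$. By the slice theorem, for a point $p$ over $\tau$ the slice representation of $\K_{\tau}$ on the normal space $\nu_{\tau}$ is polar of cohomogeneity $\operatorname{codim}_{C}\tau$, and the compatibility relations in $\G(C)$ say precisely that a conical neighborhood $U_{\tau}$ of $\tau$ in $C$, together with the isotropy groups $\K_{\tau'}\subseteq\K_{\tau}$ of the faces running through $\tau$, is the polar data of that slice representation. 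I would build $M(D)$ by gluing, over the $U_{\tau}$ (which cover $C$, with $C^{\circ}$ included) in order of increasing codimension, the local models $\tau\times(\G\times_{\K_{\tau}}N_{\tau})$, where $N_{\tau}\subseteq\nu_{\tau}$ is the invariant ball whose orbit space realizes the cone factor of $U_{\tau}$; for $\tau=C^{\circ}$ this is just $C^{\circ}\times\G/H$, and for a vertex $\tau$ it is $\G\times_{\K_{\tau}}N_{\tau}$. When $\operatorname{codim}_{C}\tau<k$, the $\K_{\tau}$-ball $N_{\tau}$ is supplied by the inductive hypothesis applied to the polar representation $(\nu_{\tau},\K_{\tau})$; when $\operatorname{codim}_{C}\tau=k$, so that $\tau$ is a vertex, I would first reconstruct the polar $\K_{\tau}$-action on the unit sphere of $\nu_{\tau}$ --- whose polar data is the portion of $\G(C)$ lying above $\tau$, of cohomogeneity $k-1$ --- by the inductive hypothesis, and then take the cone on it. The gluings over the overlaps are the collar identifications determined by the slice representations of the intervening faces; granting that these are mutually consistent, which is exactly what the compatibility relations in $\G(C)$ encode, the resulting space $M(D)$ is a smooth $\G$-manifold carrying a natural $\G$-invariant metric for which the action is polar with chamber $C$, so that its polar data is $D$.

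\emph{Identification $M(D)\cong M$.} Now suppose $D=(C,\G(C))$ is the polar data of a given Coxeter polar $\G$-manifold $M$ with section $\sigma\colon\Sigma\to M$, and fix a lift $C\hookrightarrow\Sigma\xrightarrow{\ \sigma\ }M$, chosen so that the isotropy groups along it are the subgroups occurring in $\G(C)$. I would construct a $\G$-equivariant map $\Phi\colon M(D)\to M$ over the faces of $C$ in order of increasing codimension. Over $C^{\circ}$ set $\Phi(c,gH)=g\cdot\sigma(c)$; since $\Pi_{C}$ is trivial and there are no exceptional orbits, $C^{\circ}$ is carried bijectively onto the principal part $M^{\circ}/\G$ with $H$ the stabilizer of $\sigma(c)$, so this is a $\G$-equivariant diffeomorphism onto $M^{\circ}$. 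Over $U_{\tau}$, the slice theorem identifies the invariant neighborhood $\G\cdot\sigma(U_{\tau})\subseteq M$ with the slice model $\tau\times(\G\times_{\K_{\tau}}N_{\tau}')$ for the genuine normal slice $N_{\tau}'\subseteq\nu_{\tau}$; but $N_{\tau}'$ and the model $N_{\tau}$ used in building $M(D)$ are each reconstructed from the same polar data of $(\nu_{\tau},\K_{\tau})$, hence are canonically $\G$-equivariantly identified by the inductive hypothesis, compatibly with the collars (they already agree over $C^{\circ}$). Running through all faces extends $\Phi$ to all of $M(D)$; it is $\G$-equivariant, induces the identity on the common orbit space $C$, and is a local diffeomorphism by the slice theorem on both sides, hence an equivariant diffeomorphism.

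\emph{Main obstacle.} The decisive difficulty is the coherence of the gluing in the construction: that the collar identifications prescribed by the slice representations at the various faces fit together into a Hausdorff smooth manifold --- equivalently, that the compatibility relations recorded in $\G(C)$ are neither too weak to build $M(D)$ nor stronger than what a genuine polar action must satisfy. The inductive architecture is precisely what controls this: near every face the local geometry is a slice representation, which by the inductive hypothesis is rigidly determined by its own lower-cohomogeneity polar data, so no inconsistency can arise. A subsidiary technical point, needed to keep the induction from stalling, is the reduction at the top-codimension vertices from a slice representation of cohomogeneity $k$ to its unit sphere, of cohomogeneity $k-1$.
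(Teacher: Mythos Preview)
Your overall architecture --- local slice models glued along orbits, and identification via the tubular neighborhood theorem --- matches the paper's, but the proof in the paper is not inductive and rests on two external inputs you do not invoke. First, rather than reconstructing each slice by induction, the paper uses the Coxeter Isotropy Lemma (\lref{linearisotropy}(b), ultimately Dadok plus the classification of symmetric spaces) to conclude directly that a Coxeter polar slice representation is determined up to linear equivalence by $\K$, $\H$, and its dimension; this makes the local model $\G\times_{\K}B_\epsilon^*$ well-defined without any descent to a sphere of lower cohomogeneity. Second --- and this is where your proposal has a genuine gap --- the paper uses Mendes' metric extension theorem (\tref{metrics}) to produce, on each local slice $B_\epsilon^*\subset V_{\K}$, a smooth $\K$-invariant polar metric that restricts on the section to the prescribed metric on $C(p,\epsilon)$. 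Only then does a partition of unity over $C$ yield a global polar metric with chamber isometric to the given $C$, so that $M(D)$ is genuinely a Coxeter polar manifold with data $D$.

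Your inductive scheme does not supply this. At a top-codimension vertex you propose to reconstruct the unit sphere (cohomogeneity $k-1$) and then cone; but the metric furnished on that sphere by the inductive hypothesis has as its section the distance sphere in $(B_\epsilon,g_V)$, which is not round unless $g_V$ is flat, so the metric cone is not smooth at the apex and in any case does not recover $g_V$ on the section. Thus your local models carry polar metrics that need not match the given metric on $C$, and you have not shown that the glued manifold is polar with chamber $C$ --- precisely what Mendes' theorem provides. Your sphere-then-cone reduction is a legitimate substitute for \lref{linearisotropy} at the level of smooth equivariant type (once you know from the compatibility data that \emph{some} linear model exists, uniqueness for the sphere forces your reconstruction to be that sphere), and your identification $\Phi$ is essentially the paper's; but for the ``canonical construction'' half of the theorem you must either import Mendes' result or explain how the induction carries the correct metric through the cone step.
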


For a general polar manifold we construct a $\G$-equivariant  cover
which  is Coxeter polar. Conversely, if $M$ is Coxeter polar with
data $(C,\G(C))$ and $\Gamma$ acts isometrically on $C$ preserving
the group graph, it extends to a free action on $M$ such that
$M/\Gamma$ is polar. See \tref{cover} and \tref{quotient}.

The special class of polar manifolds, where no singular orbits are
present is in a  sense ``diametrically opposite" to that of Coxeter
 polar manifolds. Those are simply described as
 \begin{center}
 $M = \G/\H \times_{\Pi}\Sigma$
\end{center}
where $\H$ is a subgroup of $\G$ and $\Pi$ a discrete subgroup of
$\N(\H)/\H$ acting properly discontinuously on $\Sigma$. We call
these actions \emph{exceptional polar} since
 this is precisely the class of polar
manifolds where the orbits are principal or exceptional, i.e., of
the same dimension, thus defining a foliation on $M$ (see
\tref{exceptional}).  These
examples also
  show that \emph{any discrete subgroup of a Lie group can be the polar
  group of a polar action}, which is our motivation for renaming these
  groups  polar groups, as opposed to generalized Weyl groups, as
  is common in the literature.

In  Theorem A one can include metrics on the section, as was shown
in \cite{Me}: If $\Sigma$ is equipped with a metric invariant under
$\Pi$, then $M(D)$ has a polar metric with section $\Sigma$,
extending the metric on $\Sigma$. This is in fact crucial in the
proof of Theorem A, as well in the construction of a large class of
examples.

 \smallskip

It is apparent from  our general result that  Coxeter polar
manifolds play a pivotal role in the theory. They are also in
various other ways the most interesting ones. For example, one
naturally
  associates to such a manifold a general \emph{chamber system} \cite{Ti2, Ro}, which provides a natural
 link to the theory of buildings. This was used in \cite{FGT} to show that \emph{polar actions on simply connected positively curved manifolds of cohomogeneity at least two are smoothly equivalent to polar actions on rank one symmetric spaces} (here passing from topological to smooth equivalence appeals to our recognition result above).  In another direction, the construction of principal
  $\L$ bundles over a cohomogeneity one manifold with orbit space an interval, crucial for the
   construction of principal bundles with total space of nonnegative curvature in \cite{GZ1,GZ2}, carries over
   to general Coxeter polar manifolds as a consequence of Theorem
   A.

\begin{main}
Let $D = (C, \G(C))$ be  data for a Coxeter polar $\G$ manifold $M$,
and let $\L$ be a Lie group.
 Then for any $\G(C)$ compatible choice of homomorphisms from each $\K \in \G(C)$ into $\L$, the resulting group
 graph $\L\times\G(C)$ together with $C$ are Coxeter data, $D' = (C, \L\times\G(C))$ for a Coxeter
  polar $\L\times\G$ manifold $P$. Moreover, $P$ is a principal $\L$ bundle over $M$.
\end{main}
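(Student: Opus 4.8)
The plan is to twist the polar data of $M$ by the chosen family of homomorphisms, reconstruct the total space $P$ from the twisted data by Theorem~A, and then recognize the principal $\L$ bundle structure directly from the isotropy groups. For each $\K\in\G(C)$ write $\phi_\K\colon\K\to\L$ for the given homomorphism and set
\[
\tilde\K \;=\; \{\,(\phi_\K(k),k)\;:\;k\in\K\,\}\ \subset\ \L\times\G ,
\]
the graph of $\phi_\K$, so that $k\mapsto(\phi_\K(k),k)$ is an isomorphism $\K\cong\tilde\K$. Declaring the slice representation of $\tilde\K$ to be that of $\K$ transported along this isomorphism (so that the $\L$ coordinate acts trivially on every slice), the assignment $\K\mapsto\tilde\K$ produces a group graph over $C$, which is the object called $\L\times\G(C)$ in the statement; one should keep in mind that $\tilde\K$ is abstractly isomorphic to $\K$, the new ingredient being only its embedding into $\L\times\G$.

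The first and essential point is that $D'=(C,\L\times\G(C))$ is again admissible Coxeter polar data, i.e.\ satisfies the compatibility relations required by Theorem~A. This is exactly where the $\G(C)$ compatibility of the family $\{\phi_\K\}$ is used: that hypothesis says precisely that $\phi_{\K'}$ restricts to $\phi_{\K''}$ along each edge inclusion $\K''\hookrightarrow\K'$ of $\G(C)$, which is what guarantees that the twisted slice representations of neighbouring faces fit together. Concretely, since each twisted slice representation of $\tilde\K$ is, as a representation, identical to the slice representation of $\K$ (only the $\K$ coordinate acts), it is again polar, and each compatibility relation for $D'$ is the image, under the fixed isomorphisms $\K\cong\tilde\K$, of the corresponding relation for $D$, which holds by assumption. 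Hence $D'$ is valid data and Theorem~A produces a Coxeter polar $\L\times\G$ manifold $P:=M(D')$ with chamber $C$ and group graph $\L\times\G(C)$; concretely, $P$ is built by gluing the twisted slice-disc bundles $(\L\times\G)\times_{\tilde\K}\Disc_\K$ over the faces of $C$, where $\Disc_\K$ is the slice disc attached to $\K$.

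It remains to exhibit $P\to M$ as a principal $\L$ bundle. Restrict the $\L\times\G$ action on $P$ to the closed normal subgroup $\L\times\{e\}$. Every isotropy group of this action is conjugate into one of the $\tilde\K$ (the chamber $C$ meets every orbit, and the $\tilde\K$ are the isotropy groups occurring along $C$), and $\tilde\K\cap(\L\times\{e\})=\{(\phi_\K(k),k):k=e\}=\{(e,e)\}$; since $\L\times\{e\}$ is normal this property is conjugation invariant, so the $\L$ action on $P$ is free. As it is also proper, being a restriction of the proper $\L\times\G$ action, $P\to P/\L$ is a principal $\L$ bundle. Finally $\L$ acts termwise on the pieces $(\L\times\G)\times_{\tilde\K}\Disc_\K$, with quotient $((\L\times\G)\times_{\tilde\K}\Disc_\K)/(\L\times\{e\})\cong\G\times_\K\Disc_\K$, and these identifications are compatible with the gluing maps used to build $P$; hence $P/\L$ is $\G$-equivariantly diffeomorphic to $M(D)$, which by the uniqueness clause of Theorem~A is in turn $\G$-equivariantly diffeomorphic to $M$. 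Composing, $P\to M$ is a $\G$-equivariant principal $\L$ bundle.

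The step I expect to demand the most care is the second: unwinding the notions of a $\G(C)$ compatible family of homomorphisms and of a group graph far enough to confirm that the twisted slice representations reproduce all the compatibility relations of $D$. Once that verification is in place, the freeness of the $\L$ action and the identification $P/\L\cong M$ follow formally from Theorem~A.
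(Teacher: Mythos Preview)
Your argument is correct and follows essentially the same route as the paper's proof (Theorem~\ref{lifts}): define the twisted isotropy groups as the graphs of the $\phi_\K$, observe that the slice representations are unchanged so the Coxeter compatibility conditions carry over, apply the reconstruction theorem to build $P$, check that $\L\times\{e\}$ acts freely because each $\tilde\K$ projects injectively to the $\G$ factor, and identify $P/\L$ with $M$ via the uniqueness clause of Theorem~A. The only cosmetic difference is that the paper identifies $P/\L$ with $M$ by first arguing that the induced $\G$ action on $P/\L$ is polar (the section in $P$ is horizontal for the $\L$ submersion) and then matching its polar data to $D$, whereas you compute the $\L$ quotient piecewise on the tubes $(\L\times\G)\times_{\tilde\K}\Disc_\K$; both lead to the same invocation of Theorem~A.
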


 Examples are
polar actions on $\CP^n$ and $\HP^n$ which lift to polar actions on
spheres under the Hopf fibration. See also \cite{Mu} for examples of
a similar type. This construction is important as well for the
special cohomogeneity two case of $C_3$ geometry  in \cite{FGT}
where the general theory for buildings brakes down.

\medskip
Extending the concept of a \emph{hyperpolar} manifold, where  the
sections are flat (euclidean polar might be a better terminology),
we say that a polar manifold $M$ is \emph{spherical polar} resp.
\emph{hyperbolic polar} if its sections have constant positive resp.
negative sectional curvature.

Using Morse theory we provide a connection between geometric
ellipticity and topological ellipticity. Recall that a topological
space  is called  rationally elliptic if all but finitely many
homotopy groups are finite.  Rationally elliptic manifolds have many
very restrictive properties, see \cite{FHT}.
 The following
can be viewed as an extension of the main result for cohomogeneity
one manifolds in \cite{GH2}:

\begin{main}
A simply connected compact spherical polar or hyperpolar manifold
$M$ is rationally elliptic.
\end{main}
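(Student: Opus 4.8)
\emph{Reduction to the Coxeter polar case.} The plan is to reduce to the Coxeter polar case and then run a $\G$-invariant Morse-theoretic argument whose critical submanifolds are homogeneous, feeding the result into the rational homotopy machinery of Grove--Halperin. Since $M$ is simply connected, the $\G$-equivariant Coxeter polar covering furnished by \tref{cover} (a connected component of which still covers $M$ and, $M$ being simply connected, is a homeomorphism) shows that $M$ is itself Coxeter polar, so $M = M(D)$ with data $D = (C,\G(C))$; in particular $C = M/\G$ is a compact convex set and the reflection group $\Rho\triangleleft\Pi$ acts on the section $\Sigma$ with fundamental domain $C$. The spherical resp.\ hyperpolar hypothesis now does essential work: it makes $\Sigma$, hence $C$, a space form of constant curvature $\ge 0$, so that $C$ is a \emph{convex polytope} in $\Sph^k$ resp.\ in a flat $\Sigma$. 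This is what makes the distance-function and focal-point analysis below possible, and it keeps the induction -- through the links of the faces of $C$, which are round spheres -- within the same class of problems. If $\Rho$ is trivial there are no singular orbits, $M$ is exceptional polar (\tref{exceptional}) and of the form $\G/\H\times_\Pi\Sigma$, and simple connectivity of $M$ forces $\Pi$ trivial and $\Sigma$ a point (hyperpolar) or a round sphere (spherical); then $M\cong\G/\H$ or $M\cong\G/\H\times\Sph^k$ is rationally elliptic. So assume $C$ has nonempty boundary.

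\emph{Morse theory and the homotopy decomposition.} Since the section has constant curvature the orbits are isoparametric-type submanifolds, and distance-type functions from a principal orbit focalize precisely on the singular orbits; using this, modelled on the cohomogeneity one construction of Grove--Halperin, one produces a $\G$-invariant Morse--Bott function $f\colon M\to\R$ whose critical set is a union of singular orbits $\G/\K_F$, one per face $F$ of $C$ (together with a minimum orbit), the normal index along $\G/\K_F$ being computed, through the slice representation, from the codimension of $F$ in $C$. Every critical submanifold of $f$ is thus a compact homogeneous space, hence rationally elliptic \cite{FHT}. Moreover the slice representation at a point of $F$ is a polar representation whose section has dimension the codimension of $F$ and whose orbit space is the cone on the link of $F$ in $C$; this link is again a spherical polytope, so a $\G$-invariant tube about $\G/\K_F$ is the cone on a spherical polar manifold of strictly smaller cohomogeneity, fibring over $\G/\K_F$ with homogeneous fibre $\K_F/\H$. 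Assembling these local models along the Morse decomposition exhibits $M$, up to $\G$-equivariant homotopy, as an iterated double mapping cylinder of the homogeneous fibrations $\G/\H\to\G/\K_F$, with the combinatorics of the poset of faces of $C$; for $\dim C = 1$ this is exactly the presentation of a simply connected compact cohomogeneity one manifold as a double mapping cylinder.

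\emph{Induction and conclusion.} One inducts on $k = \dim C$, the cohomogeneity of the action. The base case $k\le 1$ is the theorem of Grove--Halperin \cite{GH2}, proved by writing a simply connected compact cohomogeneity one manifold as the double mapping cylinder of two homogeneous fibrations and showing that such a double mapping cylinder -- a double mapping cylinder of fibrations with rationally elliptic total space and bases -- is rationally elliptic. For $k\ge 2$ one processes the faces of $C$ in order of decreasing dimension, at each stage gluing in a disk bundle over a homogeneous space along a homogeneous fibration; by the inductive hypothesis, applied to the lower-cohomogeneity spherical polar manifolds supplied by the links, what has been built so far is simply connected and rationally elliptic, so the double mapping cylinder theorem applies again. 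The process terminates with $M$, which is therefore rationally elliptic.

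\emph{Main obstacle.} The hard part is the last assembly: one must make the presentation of $M$ as an iterated double mapping cylinder over the face lattice of $C$ precise, and check that the intermediate spaces stay simply connected and rationally elliptic while the gluing maps remain fibrations over rationally elliptic bases -- exactly the hypotheses needed to cascade the cohomogeneity one theorem of \cite{GH2} down the face lattice. The geometric hypothesis is precisely what makes this work, by forcing $C$ to be a convex polytope with spherical links. (The hyperbolic polar case is genuinely excluded: there $C$ is a compact convex hyperbolic polytope, $\Rho$ contains elements of infinite order, and the manifolds produced are in general rationally hyperbolic.)
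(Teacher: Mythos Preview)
Your approach is genuinely different from the paper's, and the gap you flag as the ``main obstacle'' is real and, as stated, fatal.

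The paper does \emph{not} decompose $M$ into pieces and induct on cohomogeneity. Instead it runs Morse theory directly on the path space $F$ of paths from a regular point $p$ to a principal orbit $\G q$. The critical points of the energy are geodesics in $\Sigma$ from $p$ to the $\Pi$-orbit of $q$; for generic $p,q$ these are exactly the geodesic \emph{billiard trajectories} in the chamber $C$, and each wall-crossing contributes an index jump equal to the codimension of the corresponding stratum minus one. The curvature hypothesis enters only here: since the universal cover of $\Sigma$ is $\R^k$ or $\Sph^k$, the number of $\Pi$-translates of $q$ within distance $L$ of $p$ grows at most polynomially in $L$, so the Betti numbers of $F$ grow polynomially. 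One then feeds this into the fibration $\Omega(\G/\H)\to\Omega M\to F$ (with a further trick, replacing $\G$ by $\SU(n)\supset\G$ when $\G/\H$ has infinite fundamental group). This yields the stronger conclusion of \emph{topological} ellipticity, for every coefficient field.

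Your proposed route --- present $M$ as an iterated double mapping cylinder indexed by the face lattice of $C$ and cascade the Grove--Halperin cohomogeneity-one theorem --- does not, as written, use the curvature hypothesis in a way that separates the spherical/flat case from the hyperbolic one. The property you isolate, ``$C$ is a convex polytope with spherical links,'' holds verbatim for a right-angled hyperbolic hexagon; the link at every vertex is still an arc in $\Sph^1$, and the slice representations are still spherical polar of lower cohomogeneity. So if your induction worked from that hypothesis alone, it would also prove that $\Sph^n\times\Sph^n\#\Sph^n\times\Sph^n$ is rationally elliptic --- which it is not. What actually distinguishes the two regimes is quantitative (polynomial versus exponential growth of billiard trajectories, equivalently of $\Pi$-orbits in the universal cover of $\Sigma$), and your inductive scheme has no place where this enters. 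Concretely: attaching a disk bundle over a homogeneous space to ``what has been built so far'' is not a double mapping cylinder of fibrations with elliptic base and fibre, and there is no general principle that such an attachment preserves rational ellipticity; indeed in the hexagon example it destroys it. To salvage your approach one would have to exploit the very restricted combinatorics of spherical/Euclidean Coxeter chambers (essentially simplices or products thereof), which is a different argument from the one you sketch.
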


We actually prove the stronger property that the Betti numbers of
the loop space grow at most polynomially for any field of
coefficients (instead of just rational coefficients).   It is
interesting to compare Theorem C with the (extended) Bott conjecture
\cite{GH1,Gr} which
states that a manifold with (almost) nonnegative curvature is
rationally elliptic. It is thus also a natural question if one can
construct metrics with nonnegative curvature on spherical polar or
hyperpolar manifolds, possibly under special conditions on the
codimensions of the orbits, as was done in the cohomogeneity one
case in \cite{GZ1}.

\smallskip

Our main recognition theorem above leads to interesting
\emph{general surgery constructions}, in particular connected sums,
 showing how rich the class of polar manifolds indeed is.  As an interesting example we will see that
  $\Sph^n \times\Sph^n \# \Sph^n
\times\Sph^n$ supports a polar $\SO(n)\times\SO(n)$ action of
cohomogeneity two, and metric with section $\Sph^1\times\Sph^1 \#
 \Sph^1\times\Sph^1$ of constant negative curvature. If $n \ge 3$ one can use the proof of Theorem C  to give  a geometric
 proof of the well known fact \cite{FHT} that  $\Sph^n \times\Sph^n \# \Sph^n \times\Sph^n$  is rationally
 hyperbolic.

 \smallskip

 Another rich class of polar actions is the case of general non-exceptional actions
 by a torus with 2-dimensional quotient. This gives e.g. rise to  polar actions on
connected sums of arbitrarily many copies of:  $\Sph^2\times\Sph^2$
or $\pm\CP^2$ in dimension 4,  \, $\Sph^3\times\Sph^2$ in dimension
5, and  $\Sph^4\times\Sph^2$ or $\Sph^3\times\Sph^3$ in dimension 6.
In fact in most cases each such manifold admits infinitely
 many inequivalent polar actions.

   \bigskip

  The paper is organized as follows. In Section 1 we recall
   known properties of polar actions. In Section 2
   we develop in detail the geometry of the polar group, its reflection subgroups and their chambers.
    In Section 3 we prove Theorem A and the reconstruction for general polar manifolds. Finally in
   Section 4 we discuss various applications and examples and  prove Theorem C.

\smallskip

\section{Preliminaries}

As a natural generalization of \emph{polar representations},
classified by Dadok in \cite{Da}, Palais and
 Terng \cite{PT1} coined a proper isometric action $\G \times M \to M$ to be
  \emph{polar} if there is an embedded submanifold of  $M$ intersecting all $\G$ orbits
  orthogonally. It is common nowadays to work with the following extension
  since it, e.g., includes all cohomogeneity one manifolds.

\begin{definition}\label{proper}
A proper smooth $\G$ action on a (connected) manifold $M$ is called
\emph{polar} if there is a $\G$ invariant complete Riemannian metric
on $M$ and an immersion $\sigma: \Sigma \to M$ of a
 connected  complete manifold $\Sigma$, who's image intersects all $\G$ orbits
 orthogonally.
\end{definition}

To be more specific, we require that each $\G$ orbit intersects
$\sigma(M)$ and $\sigma_*(T_p\Sigma)$ is orthogonal to
$T_{\sigma(p)}(\G\cdot \sigma(p))$ for all $p\in \Sigma$. Here
$\sigma$ is referred to as a \emph{section} of the action, and $M$
simply as a \emph{polar manifold}. We will implicity assume that
$\sigma$ has no subcover section. It is an interesting question if
the immersion $\sigma$ has to be injective, at least if $M$ is
simply connected.

Note that if $\G$ acts polar, then the identity component $\G_\circ$
acts polar as well with the same section, since one easily sees that
$\cup_{g\in \G_\circ}\Sigma$ is open and closed in $M$. But
disconnected groups occur naturally, e.g., as slice representations.

\smallskip

We note that the following theorem, due to H. Boualem in the more
general situation of singular Riemannian foliations \cite{Bo} (see
also \cite{HLO} for the case of group actions), gives an alternate
definition, and motivation, to a polar manifold:
\begin{thm}\label{integrable}
The horizontal distribution associated with an isometric
 action is integrable on the regular part if and only if the action is polar.
\end{thm}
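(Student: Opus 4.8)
The plan is to prove the two implications separately. The direction ``polar $\Rightarrow$ horizontal distribution integrable on the regular part'' is essentially immediate: on the regular part $M_{\mathrm{reg}}$ the horizontal distribution $\mathcal{H}$ is smooth of rank $k=\dim\Sigma$, and a section $\sigma:\Sigma\to M$ is, by definition, an immersed integral manifold through each of its points. Since through every regular point $p$ there passes a $\G$-translate $g\cdot\Sigma$ of a section (as every orbit meets $\sigma(\Sigma)$), and $g\cdot\Sigma$ is again totally geodesic and horizontal, $\mathcal{H}$ admits a local integral manifold through $p$; integrability then follows from the local Frobenius theorem, or one simply invokes that a distribution tangent to a foliation (here the foliation by the translated sections on the regular part) is involutive. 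One should remark that completeness and properness are used only to guarantee that the regular part is open, dense and connected, so that $\mathcal{H}$ is a genuine smooth distribution there.

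The substantive direction is ``$\mathcal{H}$ integrable on $M_{\mathrm{reg}}$ $\Rightarrow$ the action is polar,'' i.e.\ one must manufacture a complete section out of an integral leaf. First I would fix a regular point $p_0$ and let $L_0$ be the maximal connected integral leaf of $\mathcal{H}$ through $p_0$ inside $M_{\mathrm{reg}}$; by construction $\dim L_0 = \cohom(M,\G)$ and $L_0$ meets the principal orbit through each of its points orthogonally. The first key step is to show $L_0$ is totally geodesic in $M_{\mathrm{reg}}$: for horizontal vector fields $X,Y$ tangent to a Riemannian foliation with bundle-like metric, $\nabla_X Y$ is again horizontal precisely when $\mathcal{H}$ is integrable (O'Neill-type computation: the obstruction is the symmetric part of the second fundamental form of the leaf, which is controlled by the integrability tensor), so integrability upgrades each leaf to a totally geodesic submanifold of the regular part. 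The second, and I expect \textbf{hardest}, step is to show that the leaf closure $\Sigma := \overline{L_0}$ (equivalently, the completion of $L_0$ with its induced metric) is a smooth complete totally geodesic submanifold of $M$ that still meets \emph{every} orbit — including singular ones — orthogonally. The point is that $L_0$ a priori avoids the singular set, and one must check that it limits onto the singular orbits transversally in the right way and that no bad accumulation (non-closedness, lower regularity) occurs at the boundary. The standard resolution is local: near a point $q$ on a singular orbit one uses the slice theorem to write a $\G$-neighborhood as a twisted product $\G\times_{\G_q} V$, observes that the normal slice representation $(\G_q, V)$ is again polar (its horizontal distribution is the restriction of $\mathcal{H}$, hence integrable), and argues by induction on $\dim M$ — or directly, since slice representations are linear — that $L_0$ passes through $q$ orthogonally along a linear subspace of $V$. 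Patching these local sections along $L_0$ via the totally-geodesic and completeness properties yields the global immersed $\sigma:\Sigma\to M$.

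Finally I would record that $\sigma$ so constructed is complete (it is a closed totally geodesic submanifold, or: the metric is complete and $\Sigma$ is totally geodesic, hence geodesically complete) and intersects all orbits orthogonally by the two steps above, which is exactly Definition \ref{proper}; the assumption of no subcover section is harmless, as one may replace $\sigma$ by the appropriate quotient. The only genuinely delicate ingredient is the regularity and orthogonality of the limiting section at singular orbits, and that is where I would invest the bulk of the argument, leaning on the slice theorem and the linear (hence elementary) case of polarity for slice representations; everything else is the Frobenius theorem plus a bundle-like O'Neill computation.
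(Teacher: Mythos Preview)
The paper does not prove this theorem at all: it is stated as a known result and attributed to Boualem \cite{Bo} (in the broader setting of singular Riemannian foliations) and to Heintze--Lu--Olmos \cite{HLO} for group actions. There is therefore no argument in the paper to compare your proposal against.

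That said, your outline is essentially the standard route taken in those references: the easy direction via $\G$-translates of a section, the O'Neill computation giving total geodesicness of horizontal leaves on $M_\circ$, and the slice-theorem reduction to a linear polar representation at singular points. One point to tighten: you write ``the leaf closure $\Sigma := \overline{L_0}$ (equivalently, the completion of $L_0$ with its induced metric),'' but these are \emph{not} equivalent in general. The closure of $L_0$ in $M$ may identify distinct limit points of $L_0$ lying on the same singular orbit, whereas the section in Definition~\ref{proper} is an \emph{immersed} manifold and is obtained as the metric completion of $L_0$ together with the induced immersion into $M$ (cf.\ the paper's discussion of $C$ versus $C'$ in Section~2). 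This matters precisely at the step you flag as hardest, where the leaf meets a singular orbit: the slice argument shows the completion extends smoothly across the origin of the slice, but the map to $M$ need not be injective there. Keep the two objects separate and the plan goes through.
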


There are   trivial polar actions where $\G$ discrete and hence
$\Sigma=M$, as well as the case where $G$ acts transitively and
hence $\Sigma$ a point. We call such actions {\it improper} polar
actions. We will usually assume that the polar action is proper,
although improper ones sometimes occur, e.g., as slice representations
of the $\G$ action.

\smallskip
In the following we denote by $M_\circ$ the regular points in $M$,
and by $\Sigma_\circ$ those points whose image is in $M_\circ$.
Recall that $M_\circ$ is open and dense in $M$ and
$M_\circ/\G$ is connected. For any $q\in M$, the isotropy $\G_q$
acts via the so called slice representation on the orthogonal
complement of the orbit $T_q^\perp:=(T_q(\G\cdot q))^\perp$. The
image $S_q:=\exp_q(B_\e )$ in $M$ of a small ball $B_\e\subset
T^\perp_q$
 is called the slice at $q$. By the slice theorem
$\G\times_{\G_q}B_\e$ is equivariantly diffeomorphic to a tubular
neighborhood of the orbit $\G\cdot q$.

\smallskip

 Fix a section $\sigma: \Sigma \to M$, a point $p\in\Sigma_\circ$, and let $\H =
\G_{\sigma(p)}$ be the isotropy group  corresponding to the
principal  orbit $\G\cdot \sigma(p)$. Let $\G_{\sigma(\Sigma)}
\subset \G$ be the stabilizer of the image, and $Z_{\sigma(\Sigma)}$
the centralizer (i.e. fixing $\sigma(\Sigma)$ pointwise). We refer
 to $\Pi: = \G_{\sigma(\Sigma)}/Z_{\sigma(\Sigma)} $ as the \emph{polar group} associated to the section $\sigma$
 (the corresponding group was called the ``generalized Weyl" group in
 \cite{PT1}). Note that for improper polar actions $\Pi = \G$ when $\Sigma = M$
and $\Pi = \{1\}$ when $\Sigma$ is a point.

\smallskip

The usual properties of polar actions for which the section is
embedded, easily carry over to the immersed case, though care needs
to be taken since $\sigma$ may not be injective. For the convenience
of the reader, we indicate the details.

\begin{prop}\label{genproperties}
Let $\G$ act on $M$ polar with section $\sigma\colon \Sigma\to M$
and polar group $\Pi$ and projection $\pi\colon M\to M/\G$. Then the
following hold:
\begin{enumerate}
\item[(a)] The image $\sigma(\Sigma)$ is totally geodesic and  determined by the
slice of any principal orbit.
  \item[(b)] The polar group $\Pi$ is a discrete subgroup of $\N(\H)/\H$ and acts isometrically and properly discontinuously on
   $\Sigma$ such that $\sigma$ is equivariant. Furthermore,
  $\pi\circ\sigma$ induces an isometry between $\Sigma/\Pi$ and
  $M/\G$ and thus $M/\G$ is an orbifold.
   \item[(c)] $\Sigma_{\circ}:=\sigma^{-1}(M_\circ)$ is open and dense in $\Sigma$ and $\sigma$ is injective on
    $\Sigma_{\circ}$. The polar group $\Pi$ acts freely on
$\Sigma_{\circ}$ and $\pi\circ\sigma$ induces an isometric
covering from each component of  $\Sigma_{\circ}$ onto $
M_{\circ}/ \G$.
 \item[(d)] The slice representation of an isotropy group $\G_q ,\ q\in\sigma(\Sigma),$ is a
polar representation, and for each $p\in\sigma^{-1}(q)$ the
linear subspace $\sigma_{*} (T_p \Sigma)\subset T_q^\perp
\subset T_qM$ is a section with  polar group the (finite)
isotropy group $\Pi_p$.
\end{enumerate}
\end{prop}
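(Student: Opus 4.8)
\emph{Overall strategy.} The plan is to establish (a)--(d) by reducing to the classical embedded case (Palais--Terng) and then carefully handling the non-injectivity of the immersion $\sigma$ via the standing hypothesis that $\sigma$ has no subcover section. All four statements are local along $\sigma(\Sigma)$ except for the covering and quotient assertions, which are assembled from the local picture. I would prove the local facts in the order: total geodesy at regular points, polarity of the slice representations, density of the regular set, and only then the global conclusions --- the delicate point being to keep this order non-circular.

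\emph{Local normal form and (a).} At a regular point $q=\sigma(p)$ the orbit $\G\cdot q$ is principal, so $\sigma_*(T_p\Sigma)$, being orthogonal to it, is the whole normal space $T_q^{\perp}$, and the slice representation of $\H=\G_q$ on $T_q^{\perp}$ is trivial; thus $\H$ fixes the slice $S_q=\exp_q(B_\e)$ pointwise and $S_q$ is an open piece of $\Fix(\H)$, in particular totally geodesic. It remains to see that $\sigma$ itself is totally geodesic, which identifies $\sigma(\Sigma)$ with $S_q$ near $q$. By \tref{integrable} the horizontal distribution is integrable with $\sigma(\Sigma)$ an integral leaf, so for fields $X,Y$ tangent to the leaf the quantity $\langle\nabla_XY,Z\rangle$ is symmetric in $X,Y$ (since $\nabla_XY-\nabla_YX=[X,Y]$ is again tangent to the leaf); on the other hand $\G$-invariance of the metric forces the same quantity to be skew in $X,Y$ for every Killing field $Z$ of the action (differentiate $\langle Y,Z\rangle\equiv 0$ along the leaf and use the Killing equation). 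Hence $\II\equiv 0$ on the regular part, so everywhere by density and continuity, and $\sigma(\Sigma)$ is totally geodesic; since $\Sigma$ is complete it equals $\exp_q(T_q^{\perp})$, so it is the totally geodesic continuation of the slice of any principal orbit. In particular $\H$ fixes $\sigma(\Sigma)$ pointwise.

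\emph{Slice representations, density, and (c),(d).} For $q\in\sigma(\Sigma)$ and $p\in\sigma^{-1}(q)$, total geodesy gives $\exp_q(W)\subseteq\sigma(\Sigma)$ where $W=\sigma_*(T_p\Sigma)\subseteq T_q^{\perp}$; via the slice theorem the $\G$-orbits meeting $S_q$ are the images of the $\G_q$-orbits in $B_\e$, so orthogonality of $\sigma(\Sigma)$ to all orbits shows $W$ is a section of the slice representation, which is therefore polar. A section of a polar representation meets the dense set of regular vectors, and $\G$-regular points are dense in each slice $S_q$ (the principal isotropy of the tube $\G\times_{\G_q}B_\e$ coincides with that of the slice representation), so $\Sigma_\circ=\sigma^{-1}(M_\circ)$ is open and dense. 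Two preimages of a regular point have the same tangent plane $T_q^{\perp}$, hence the same totally geodesic image germ there, and were $\sigma$ not injective on $\Sigma_\circ$ this germ-matching would globalise to a nontrivial deck transformation --- a subcover section, which is excluded; so $\sigma$ is injective on $\Sigma_\circ$. A horizontal lift of a path in $M_\circ/\G$ stays in $\sigma(\Sigma)\cap M_\circ$ (the leaf is complete, being totally geodesic in complete $M$), which gives the path lifting needed for $\pi\circ\sigma$ to restrict to an isometric covering of $M_\circ/\G$ by each component of $\Sigma_\circ$, and $\Pi$ acts freely there since a lift fixing a regular $q$ lies in $\G_q=\H=Z_{\sigma(\Sigma)}$.

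\emph{The polar group and (b).} Any $g\in\G_{\sigma(\Sigma)}$ maps regular points of $\sigma(\Sigma)$ to regular points with $\G_{gq}=g\H g^{-1}$; but every principal isotropy along the section equals $\H$ (it contains $\H$, which fixes $\sigma(\Sigma)$ pointwise, and has the same dimension and component count), so $g\in\N(\H)$, while $Z_{\sigma(\Sigma)}=\H$ (contained in $\G_q=\H$ and containing $\H$), giving $\Pi\subseteq\N(\H)/\H$. Its identity component $\G_{\sigma(\Sigma)}^{\circ}$ fixes each regular $q$ (its orbit through $q$ lies in both $\sigma(\Sigma)$ and $\G\cdot q$, which meet discretely by orthogonality), so $\Pi$ is discrete, and properness of the $\G$-action passes to a properly discontinuous isometric $\Pi$-action on $\Sigma$, with $\sigma$ equivariant by construction. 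That $\pi\circ\sigma$ descends to a bijection $\Sigma/\Pi\to M/\G$ is the statement that two points of $\sigma(\Sigma)$ in the same $\G$-orbit are related by $\G_{\sigma(\Sigma)}$: if $gq_1=q_2$ then $g\sigma(\Sigma)$ is a second section through $q_2$, hence conjugate near $q_2$ to $\sigma(\Sigma)$ by some $l\in\G_{q_2}$ (two sections of a polar representation are conjugate under the group), whence $lg\in\G_{\sigma(\Sigma)}$ with $lg\,q_1=q_2$. As $\Pi$ acts isometrically this bijection is an isometry, so $M/\G=\Sigma/\Pi$ is a Riemannian orbifold; and $\Pi_p$, being the classes of the $g\in\G_{\sigma(\Sigma)}$ fixing $p$, i.e.\ of elements of $\G_q$ preserving $\sigma(\Sigma)$ and hence acting linearly on $T_q^{\perp}$ preserving $W$, is exactly the polar group of the slice representation --- finite, since $\Pi$ acts properly discontinuously. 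The main obstacle throughout is the bookkeeping forced by $\sigma$ being only immersed: lifting image-stabilising isometries to $\Sigma$ compatibly with $\sigma$, verifying that ``no subcover section'' is precisely what forces injectivity on $\Sigma_\circ$, and ordering the local statements so that total geodesy, polarity of the slice representations, and density of $\Sigma_\circ$ are not used circularly.
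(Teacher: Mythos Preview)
Your second-fundamental-form computation for total geodesy on the regular part is clean and differs from the paper's approach (which simply observes that a horizontal curve projects to a geodesic under the Riemannian submersion $M_\circ\to M_\circ/\G$, so $\Sigma$-geodesics map to $M$-geodesics on the regular part). But your ordering is circular in exactly the way you flag. To extend $\II\equiv0$ from $\Sigma_\circ$ to all of $\Sigma$ you invoke density of $\Sigma_\circ$; you derive density from polarity of the slice representation at a general $q$; yet your argument for that polarity begins with ``total geodesy gives $\exp_q(W)\subseteq\sigma(\Sigma)$'', and at a non-regular $q$ total geodesy is precisely what is not yet available. Without it you have no way to identify $\sigma$ near $p$ with $\exp_q$ on $W$, and hence no way to relate the linear $\K$-action on $T_q^\perp$ to the $\G$-orbits near $q$. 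The paper breaks the loop by proving density \emph{first}, via an argument independent of (d): along a $\Sigma$-geodesic issuing from a regular point the non-regular points are isolated, since at any limit of regular points the slice theorem forces the isotropy on the two sides to coincide. Only then does it conclude total geodesy by continuity, and only after that does it turn to (d).

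There is a second gap in (d) itself: orthogonality of $W$ to the $\K$-orbits is only half of what ``section'' means; you must still show that every $\K$-orbit in $T_q^\perp$ actually meets $W$, and the sentence ``orthogonality of $\sigma(\Sigma)$ to all orbits shows $W$ is a section'' does not supply this. The paper's device is a minimal-geodesic argument: for $s$ in the slice $S_q$, join a nearby regular orbit $\G\cdot\sigma(r)$ to $\G\cdot s$ by a minimizing geodesic; based at $\sigma(r)$ it is horizontal, hence lies in $\sigma(\Sigma)$ by total geodesy, so $\G\cdot s$ --- and therefore $\K\cdot s=\G\cdot s\cap S_q$ --- meets the local image of $\sigma$. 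Finally, a word is owed on lifting the $\Pi$-action from $\sigma(\Sigma)$ to $\Sigma$: the paper does this via injectivity of $\sigma$ on $\Sigma_\circ$ and then extends by density, whereas you assert ``$\sigma$ equivariant by construction'' without saying how the action on the domain is produced.
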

\begin{proof}
 If $p\in\Sigma_\circ$, then for dimension reasons
$T^\perp_{\sigma(p)}=\sigma_*(T_p\Sigma)\subset T_{\sigma(p)}M$.
Thus, if  $c$ is a geodesic in $\Sigma$ with $\gamma(0)=p$,
$\sigma(c)$ is horizontal on the regular part and hence a geodesic
in $M_\circ$ since $\pi\colon M_{\circ}\to M_{\circ}/\G$ is a
Riemannian submersion. The non-regular points on $\sigma(\gamma)$
are isolated. Indeed,  the set of regular points is clearly
non-empty and open. If $q=\gamma(t_0)$ lies in its closure, the
slice theorem at $q$, since $\sigma(\gamma)\subset S_q$, implies
that the isotropy groups at $\sigma(\gamma)(t_0-\e)$ and
$\sigma(\gamma)(t_0+\e)$ are the same, which proves our claim. This
implies that $\Sigma_\circ$ is dense in $\Sigma$ and hence
$\sigma(\Sigma)$ is totally geodesic.
 Since $\sigma$ has no
subcovers, $\sigma_*(T_p\Sigma)$ determines the immersion. Notice
this also implies that if $g\in\G$ satisfies
$g_*(\sigma_*(T_p\Sigma))=\sigma_*(T_q\Sigma)$ for some
$p,q\in\Sigma$, then $g\in\G_{\sigma(\Sigma)} $ with $gp=q$.
Similarly, $\sigma\colon\Sigma_\circ\to M_\circ$ is injective and
$\Pi$ acts freely on $\Sigma_\circ$.

 By the
slice theorem the principal isotropy at $\G_{\sigma(p)}=\H$ acts
trivially on $S_{\sigma(p)}\subset\sigma(\Sigma)$ and since $\Sigma$
is totally geodesic,
 $\H$ acts trivially on $\sigma(\Sigma)$.
In fact, by the slice theorem  again, the centralizer
$Z_{\sigma(\Sigma)}=Z_{\sigma(\Sigma_{\circ})}=\H$. Since $\H$ is
thus the ineffective kernel of the action of $\G_{\sigma(\Sigma)} $
on $\sigma(\Sigma)$, $\H$ is normal in $\G_{\sigma(\Sigma)} $, i.e.,
$\G_{\sigma(\Sigma)} \subset\N(\H)$. Next, let $h\in
\G_{\sigma(\Sigma)}$ be close to the $\Id$. By the slice theorem the
orbits  in a tubular neighborhood of the regular point $\sigma(p)$
intersect the slice in a unique point and thus
$h\sigma(p)=\sigma(p)$, i.e. $h\in \H$. Thus $\H$ is open in
$\G_{\sigma(\Sigma)} $ and hence $\Pi$ is discrete in $\N(\H)/\H$.

 The polar group $\Pi$ acts on $\sigma(\Sigma)$ by definition and
preserves $\sigma(\Sigma_{\circ})\subset M_{\circ}$. Since
$\sigma\colon \Sigma_{\circ}\to M_{\circ}$ is injective,  the action
lifts to an isometric action on $\Sigma_{\circ}$ such that $\sigma$
is $\Pi$ equivariant. Since the action is isometric and
$\Sigma_{\circ}$ is dense, $\gamma\in\Pi$ extends to a metric
isometry on $\Sigma$ and hence a smooth isometry. By the same
argument as above, $\Pi\subset\Iso(\Sigma)$ is a discrete subgroup.
That it acts properly discontinuously is a direct consequence of
properness of the $\G$ action.  Indeed, since the $\G$ orbits are
closed, so are the $\Pi$ orbits, hence preventing accumulation
points.

We now show that   $\pi\circ\sigma$ induces an isometry
$\Sigma_\circ/\Pi\simeq M_\circ/G$, which by continuity implies
$\Sigma/\Pi\simeq M/G$. Let $p_i\in\Sigma_\circ$ with
$\sigma(p_2)=g\sigma(p_1)$. Then $g_*\sigma_*(T_{p_1}\Sigma)=
\sigma_*(T_{p_2}\Sigma)$. Hence $g\in\G_{\sigma(\Sigma)} $ and
 thus there exists a unique $\gamma\in\Pi$ with $p_2=\gamma p_1$.
 Furthermore, $\pi\circ\sigma \colon \Sigma_\circ\to M_\circ/\G$ is a local
 isometry, in fact a covering onto $M_\circ/\G$.

 To see that the slice representation of $\K:=\G_{\sigma(p)}$ on $T^\perp_{\sigma(p)}$ is polar
 with sections $\sigma_{*} (T_p \Sigma) \subset T_qM$,
 take a geodesic $c$ in $\Sigma$
 starting at $p$. Then by the slice theorem the isotropy along the geodesic $\sigma(c(0,\e))$ is
 constant, and is equal to the isotropy of the slice representation along
 $t\sigma_*(c'(0))$ for $t\in(0,\e)$. As $t\to 0$, it follows that the $\K$ orbit $\K\cdot (t\sigma_*(c'(0)))$  is orthogonal to  $\sigma_{*} (T_p
 \Sigma)$. In addition, we need to show that each $\K$ orbit  meets  the linear subspace $\sigma_{*} (T_p
 \Sigma)$. Equivalently, $\K\cdot s=\G\cdot s\cap S_{\sigma(p)}$ with $s\in S_{\sigma(p)} $ meets $\sigma(B_\e(p))\cap S_{\sigma(p)}$.
 To see this, pick a regular point $r\in\Sigma_\circ$ near
 $p$. Now connect  the two orbits  $\G\cdot \sigma(r)$ and $\G\cdot
 s$ by a minimal geodesic $c$ in $M$, which we can assume starts at
 $\sigma(r)$ and is hence tangent to $\sigma_{*} (T_p
 \Sigma)=T_{\sigma(p)}(\G\cdot {\sigma(p)})^\perp $. Thus $c\subset\sigma(\Sigma)$,  which implies that
   $\G\cdot
 s$ and hence $\K\cdot s$
 meets $\sigma(\Sigma)$. Finally, the polar group of  $\sigma_{*} (T_p
 \Sigma)$ is clearly $\Pi_p$.
 \end{proof}

We will call $\Pi_p$ the {\it local polar group}.  Notice that
$\Pi_p$  is in general  smaller than the stabilizer
$\Pi_{\sigma(p)}$ (if $\sigma^{-1}(q)$ has more than one point), and
that $\Pi$ acts transitively on $\sigma^{-1}(q)$.

\smallskip

 It is interesting to note that for a general group
action, the quotient $M/\G$ is an orbifold if and only if the action
is infinitesimally polar, i.e., all  slice representations are polar
\cite{LT}.

\smallskip

If a polar action by $\G$ acts linearly and isometrically on a
vector space $V$, it is called a polar representation. If $\G/\K$ is
a symmetric space with $\K$ connected, the isotropy action of $\K$
on the tangent space is well known to be polar, the section being a
maximal abelian subspace  $ \fa\subset\fk^\perp\subset\fg$. Such
representations are called {\it s-representations}. Notice that the
cohomogeneity of the action of $\K$ is the rank of the symmetric
space. The action of $\K$ of course also induces a polar action on
the unit sphere in $V$, with cohomogeneity one less than the rank.

\smallskip

 The remarkable result by Dadok \cite{Da}
(see also \cite{EH1} for a conceptual proof if the cohomogeneity is
at least 3) is a converse. For this, first recall that two actions
with the same orbits are called orbit equivalent.

\begin{prop}\label{linear} Let $\G$ be a connected Lie group acting linearly
on $V$.
\begin{enumerate}
\item[(a)] The $\G$ action on $V$ is polar if and only if it is orbit equivalent to an
s-representation.
 \item[(b)] If the $\G$ action is polar, the polar group is a finite group generated by  linear reflections.
\end{enumerate}
\end{prop}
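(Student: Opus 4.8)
The plan is to reduce Proposition \ref{linear} to Dadok's classification theorem \cite{Da}, which identifies polar representations with s-representations up to orbit equivalence, and then to analyze the polar group of an s-representation directly.

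For part (a), the content is exactly Dadok's theorem. Since $\G$ is connected and acts linearly, the orbit space $V/\G$ is contractible (it is a cone), so the action has a section $\Sigma$ through the origin; because $\Sigma$ is totally geodesic in $V$ by \pref{genproperties}(a) and passes through $0$, it is a linear subspace. Thus the notion of polar representation here coincides with the classical one, and Dadok's result gives the asserted orbit equivalence with an s-representation. I would simply cite \cite{Da} (and note \cite{EH1} for cohomogeneity $\ge 3$) for this half, perhaps adding the remark that orbit equivalence preserves the section and hence the property of being polar, so there is no circularity.

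For part (b), since the polar group and the chambers depend only on the orbits (the section $\Sigma$, the singular strata in $\Sigma$, and the reflections across the walls are all orbit-equivalence invariants by \pref{genproperties}), it suffices to prove the statement for an s-representation: $\K$ connected, $\G/\K$ a symmetric space, section $\fa\subset\fk^\perp\subset\fg$ a maximal abelian subspace. Here the polar group $\Pi = \N_\K(\fa)/\Z_\K(\fa)$ is precisely the (restricted) Weyl group $W$ of the symmetric pair, which is classically known to be finite and generated by the reflections in the walls $\ker\lambda$ of the restricted roots $\lambda$ — these are genuine orthogonal reflections of $\fa$. Finiteness also follows abstractly from \pref{genproperties}(b): $\Pi$ is discrete, and it acts on the vector space $\fa$ by linear isometries (since every element extends $0\mapsto 0$ linearly), hence lies in $\O(\fa)$ and is discrete in a compact group, so finite. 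The fact that it is generated by reflections is the statement that a finite group acting on a sphere/vector space all of whose singular orbits have a reflection structure is a Coxeter group; concretely, for s-representations this is Chevalley's theorem on Weyl groups of symmetric spaces.

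The main obstacle is making precise that the reflection-group structure transfers across orbit equivalence: a priori Dadok's equivalence is only an orbit equivalence, not an equivariant isometry, so one must check that it carries the section to the section and walls to walls. The clean way is to observe that the polar group, as defined via $\G_{\sigma(\Sigma)}/\Z_{\sigma(\Sigma)}$ acting on $\Sigma$, together with its singular stratification, is intrinsic to the pair (manifold, orbit partition): two orbit-equivalent polar actions have the same sections (a totally geodesic submanifold meeting all orbits orthogonally is determined by the foliation by regular orbits, \pref{genproperties}(a)) and the same isotropy stratification along $\Sigma$, hence the same polar group and the same walls. Granting this, (b) for a general polar representation follows from (b) for s-representations, which is classical. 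I would spell out this invariance in a sentence or two and then invoke the symmetric-space computation, citing \cite{Da}.
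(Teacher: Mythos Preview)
Your proposal is correct and follows essentially the same route as the paper: part (a) is simply attributed to Dadok \cite{Da}, and part (b) is deduced from (a) by observing that orbit-equivalent polar actions share the same polar group acting on the same section, whence the polar group coincides with the Weyl group of the corresponding symmetric space, which is classically finite and generated by linear reflections. The paper compresses your invariance discussion into the single sentence ``orbit equivalent actions have the same Weyl group (as an action on $\Sigma$),'' so your more careful justification of this point is a welcome expansion rather than a departure.
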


 Notice that orbit equivalent actions have the same Weyl group (as an action on $\Sigma$). Hence part (b) follows from (a)
since this fact is well known for symmetric spaces. In this case the
polar group coincides with the usual Weyl group of a symmetric
space. We reserve the word Weyl group for this particular kind of
polar group. Thus, if $\K=\G_{\sigma(p)}$,  the slice representation
of $\Ko$ is an s-representation and its polar group $\Pi_p\cap \Ko$
satisfies all the well known properties of a Weyl group, and in
particular is generated by linear reflections with fundamental
domain a (Weyl) chamber. The full local polar group $\Pi_p$ is a
finite extension of this Weyl group.

\begin{rem*}
If two connected groups $\K$ and $\K'$ are orbit equivalent polar
representations, with $\K$ being an s-representation, then it
 was shown in \cite{EH1} that $\K'\subset\K$ and in \cite{EH2,Ber,FGT} one
 finds a list of all connected subgroups of s-representations which are orbit equivalent.
\end{rem*}

\smallskip

For metrics we have the following natural but highly nontrivial
result due to R. Mendes \cite{Me}:

\begin{thm}[Metrics]\label{metrics}
Let $M$ be a polar $\G$ manifold with section $\sigma\colon\Sigma\to
M$ and polar group $\Pi$. Then for any $\Pi$ invariant metric
$g_\circ$ on $\Sigma$ there exists a $\G$ invariant metric $g$ on
$M$ with section $\Sigma$, such that $\sigma^*(g)=g_\circ$.
\end{thm}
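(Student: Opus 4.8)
The plan is to reduce the construction of $g$ on $M$ to a local problem near each orbit and then patch. First I would work on the regular part $M_\circ$, where the action is a fiber bundle $\G\times_\H\Sigma_\circ \to M_\circ$ (or rather $M_\circ$ is covered by copies of $\Sigma_\circ$ as in \pref{genproperties}(c)) and the horizontal distribution is an integrable $\G$-invariant distribution tangent to $\sigma(\Sigma_\circ)$. Here producing $g$ is straightforward: having fixed $\pi\colon M_\circ \to M_\circ/\G$ with fibers the regular orbits, one may choose any $\G$-invariant metric along the fibers, and declare $\sigma(\Sigma_\circ)$ horizontal with metric $g_\circ$; the condition that $\sigma$ be totally geodesic forces a unique way to propagate this by the $\G$-action, and $O'Neill$-type formulas show the resulting metric is smooth on $M_\circ$. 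The real content is to show this metric extends smoothly across the singular strata.

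To handle the singular part, I would argue inductively on the dimension of the orbit, or equivalently on the codimension of the singular stratum, using the slice theorem. Fix a singular point $q = \sigma(p)$ with isotropy $\K = \G_q$. By \pref{genproperties}(d) the slice representation of $\K$ on $T_q^\perp$ is polar with section $\sigma_*(T_p\Sigma)$ and polar group $\Pi_p$; and $\Pi_p$ is generated by reflections up to the finite extension $\Pi_p/(\Pi_p\cap\Ko)$, by \pref{linear}(b). The metric $g_\circ$ restricts near $p$ to a $\Pi_p$-invariant metric on a neighborhood of $p$ in $\Sigma$. So by induction on cohomogeneity (the slice representation has strictly lower-dimensional singular orbits, and the base case is the free or the linear case) one gets a $\K$-invariant metric on a ball $B_\e\subset T_q^\perp$ making the slice representation polar with the prescribed section metric; this descends to a $\G$-invariant metric on the tube $\G\times_\K B_\e$ around $\G\cdot q$ via the standard submersion/associated-bundle construction. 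The point is that on the tube the section is $\exp_q$ of the section in $T_q^\perp$, so the section metric there is $\exp_q^*$ of the one on $B_\e$, which we arranged to agree with $g_\circ$ (to first order; one must be a little careful that the radial direction metric matches, which is governed by the Jacobi/tube equations — this uses that $g_\circ$ is smooth and $\Pi_p$-invariant so the rescaled limit at $p$ is the flat metric).

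The main obstacle — and where I expect \cite{Me} does genuine work — is the \emph{patching}: showing that the $\G$-invariant metrics produced on the various tubes, and on $M_\circ$, agree on overlaps, or can be glued by a $\G$-invariant partition of unity without destroying the ``polar with section $(\Sigma, g_\circ)$'' property. Two $\G$-invariant metrics that both make $\sigma$ a totally geodesic section and both restrict to $g_\circ$ on $\Sigma$ need not be equal — off the section they can differ — so a naive partition of unity glues them to something still $\G$-invariant but possibly no longer polar. The resolution is that a $\G$-invariant metric which is polar with a fixed section is, by the Chevalley-type restriction picture, essentially \emph{determined} by its restriction to $\Sigma$ together with the fiber metrics; more precisely one sets up the space of ``admissible'' metrics (making $\sigma(\Sigma)$ horizontal and totally geodesic) as an affine space over $\G$-invariant tensors supported tangent to the orbits and vanishing suitably along $\Sigma$, shows this space is convex and locally non-empty by the slice argument above, and then uses a $\G$-invariant partition of unity subordinate to the tubes-plus-regular-part cover to glue \emph{within this affine space}. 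Convexity of the admissibility conditions (being a Riemannian metric is open-convex; being totally geodesic along $\Sigma$ and horizontal there are affine-linear) makes the convex combination again admissible, hence again polar with section metric $g_\circ$, completing the proof.
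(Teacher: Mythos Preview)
The paper does not prove this theorem. It is stated as a ``natural but highly nontrivial result due to R.~Mendes \cite{Me}'' and is used as a black box, both here and in the proof of \tref{coxeterreconstruction} (where the authors explicitly defer to ``the proof in \cite{Me}'' for the local smoothness statement). So there is no proof in the paper to compare your proposal against.

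That said, your outline is broadly the right architecture --- regular part by hand, tubes via the slice theorem, glue by partition of unity inside a convex set of admissible metrics --- and you correctly flag the gluing step as the one requiring care. One point to watch: your ``induction on cohomogeneity'' does not actually reduce anything, since the slice representation of $\K=\G_q$ on $T_q^\perp$ has the \emph{same} cohomogeneity as the $\G$ action on $M$ (the section is $\sigma_*(T_p\Sigma)$, of the same dimension). What you really need as a base case is the theorem for \emph{linear} polar actions, and that is not obviously easier; it is essentially a Chevalley-type restriction/extension statement for $\Pi$-invariant symmetric $2$-tensors on the section to $\K$-invariant symmetric $2$-tensors on $V$, and this is where Mendes does the real work. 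Your sketch treats the linear case as given, which hides the main analytic content.
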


\bigskip


\bigskip

\section{Polar group, Reflection subgroups and Chambers}

\bigskip

To begin a more detailed analysis of the action by the polar group
we derive more information about
 the orbit space $M/\G = \Sigma/\Pi$.
 This in particular will reveal the existence of an important normal subgroup $\Rho \subset \Pi$
 generated by \emph{reflections},
 which typically is non-trivial. In particular, the presence of \emph{singular orbits}, i.e.,
 orbits with dimension lower than the regular orbits, will yield the existence of a non-trivial
 normal reflection subgroup $\Rho_s \subset  \Pi$. The associated concept of
 \emph{mirrors} lead to the presence of open \emph{chambers}, whose length space completion and
  the isotropy groups along them  are the crucial ingredients for the recognition and reconstruction.

\bigskip

 We refer to polar actions with no singular points simply as {\it exceptional polar} actions. For those we have the following
characterization:

\begin{thm}[Exceptional Polar]\label{exceptional}
An action $\G \times M \to M$ without singular orbits is polar if
and only if $M$ is equivariantly
 diffeomorphic to an associated bundle $\G/\H \times_{\Pi} \ \Sigma$, where $\Pi$ is a discrete subgroup
  of $\N(\H)/\H$ acting isometrically and properly discontinuously on a Riemannian manifold
  $\Sigma$. The image of $(e\H)\times\Sigma\subset \G/\H\times\Sigma$ is a section and  $\Pi$ is  the polar group.
\end{thm}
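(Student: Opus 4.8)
The plan is to prove both directions through a single $\G$-equivariant map $\bar\phi\colon\G/\H\times_\Pi\Sigma\to M$, $[g\H,x]\mapsto g\,\sigma(x)$, where in the $(\Rightarrow)$ direction $\sigma$ is the given section and $\H=\G_{\sigma(p)}$, $\Pi\subset\N(\H)/\H$ are the principal isotropy and polar group, while in the $(\Leftarrow)$ direction $M=\G/\H\times_\Pi\Sigma$ is the given bundle and $\sigma$ is the inclusion $x\mapsto[e\H,x]$. The only place I would use the hypothesis ``no singular orbits'' is to record that all orbits have the principal dimension $d$, so that $\dim\G_q=\dim\fh$ for all $q$ and, since $\sigma_*(T_x\Sigma)\subset T^\perp_{\sigma(x)}$ always while $\dim T^\perp_{\sigma(x)}=\dim M-d=\dim\Sigma$, in fact $\sigma_*(T_x\Sigma)=T^\perp_{\sigma(x)}$ for \emph{every} $x\in\Sigma$, not just the regular ones. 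This upgrade of \pref{genproperties}(a),(d) is what makes the forward direction essentially formal.

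For $(\Rightarrow)$ I would first check $\bar\phi$ is well defined (because $\H$ fixes $\sigma(\Sigma)$ pointwise, by the proof of \pref{genproperties}), that it descends from $\G/\H\times\Sigma$ to the $\Pi$-quotient (because $\sigma$ is $\Pi$-equivariant, \pref{genproperties}(b)), and that it is $\G$-equivariant. Surjectivity is immediate since $\sigma(\Sigma)$ meets every orbit. For injectivity: if $g\,\sigma(x)=g'\,\sigma(x')$, put $h=g^{-1}g'$; then $h_*\sigma_*(T_{x'}\Sigma)=h_*T^\perp_{\sigma(x')}=T^\perp_{\sigma(x)}=\sigma_*(T_x\Sigma)$, so by the observation in the proof of \pref{genproperties} $h$ stabilises $\sigma(\Sigma)$ and, after correcting by the element of $\Pi$ supplied by transitivity of $\Pi$ on $\sigma^{-1}(\sigma(x))$, carries $x'$ to $x$; feeding the class $\bar h\in\Pi$ of $h$ into the twisted product then identifies $[g\H,x]$ with $[g'\H,x']$. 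For local invertibility I would compute $d\bar\phi$ at $[e\H,x]$: it carries $(\fg/\fh)\oplus T_x\Sigma$ onto $T_{\sigma(x)}(\G\cdot\sigma(x))\oplus T^\perp_{\sigma(x)}=T_{\sigma(x)}M$, injectively on the first summand because $\fg_{\sigma(x)}=\fh$ (both of dimension $\dim\fg-d$, and $\fh\subseteq\fg_{\sigma(x)}$), injectively on the second because $\sigma$ is an immersion; $\G$-equivariance carries this to all of $M$. A bijective local diffeomorphism is a diffeomorphism, so $\bar\phi$ is the required equivariant diffeomorphism, with $\Sigma$ given the metric induced from $M$ — on which $\Pi$ acts isometrically and properly discontinuously (\pref{genproperties}(b),(c)) — and $\H$ compact.

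For $(\Leftarrow)$ I would first note $M=\G/\H\times_\Pi\Sigma$ is a manifold: $\Pi$ acts freely on $\G/\H$ by right translations and properly discontinuously on $\Sigma$, hence diagonally so on the product; $\G$ acts properly on the left ($\H$ compact), its orbits are the images of the $\G/\H\times\{x\}$, all of dimension $\dim\G/\H$, so there are no singular orbits. The substantive step is the metric. I would keep the given $\Pi$-invariant $g_0$ on $\Sigma$, choose a $\Pi$-equivariant smooth map $x\mapsto B_x$ from $\Sigma$ into the open convex cone of $\Ad(\H)$-invariant inner products on $\fg/\fh$ (where $\Pi$ acts linearly through $\Ad$ of representatives in $\N(\H)$), and put on $\G/\H\times\Sigma$ the metric that is $g_0$ on the $\Sigma$-factor, the left-$\G$-translate of $B_x$ on the $\G/\H$-factor over $x$, and has the two factors orthogonal; it is $\G$-invariant, and $\Pi$-invariant by the equivariance of $x\mapsto B_x$, so it descends to a complete $\G$-invariant metric on $M$ ($M\to M/\G=\Sigma/\Pi$ is then a Riemannian-submersion fibre bundle with complete homogeneous fibres over a complete base). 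Then $x\mapsto[e\H,x]$ is an embedded section meeting all orbits orthogonally and inducing $g_0$, so the action is polar; and since $g\in\G$ stabilises the section iff $g\in\N(\H)$ with class in $\Pi$, and fixes it pointwise iff that class acts trivially on $\Sigma$, the polar group is exactly $\Pi$.

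I expect the main obstacle to be the metric in $(\Leftarrow)$: because $\Pi$ may be infinite with $\Ad(\Pi)$ unbounded on $\fg/\fh$ (for instance when $\G$ is solvable), there need not be any single $\G$- and $\Pi$-invariant metric on $\G/\H$, so the fibre metric genuinely has to vary over $\Sigma$, and producing the $\Pi$-equivariant family $x\mapsto B_x$ requires an averaging/obstruction argument over the orbifold $\Sigma/\Pi$ (convexity of the cone giving fixed points for the finite isotropy groups, a $\Pi$-invariant partition of unity doing the gluing); verifying completeness of the resulting metric is a smaller additional point. Everything in $(\Rightarrow)$, by contrast, is routine once the pointwise equality $\sigma_*(T_x\Sigma)=T^\perp_{\sigma(x)}$ is in hand.
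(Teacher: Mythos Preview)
Your argument is correct and follows essentially the same route as the paper: both directions go through the map $[g\H,x]\mapsto g\,\sigma(x)$, driven by the observation that without singular orbits one has $\sigma_*(T_x\Sigma)=T^\perp_{\sigma(x)}$ at \emph{every} point. The paper is terser in two places. For injectivity in $(\Rightarrow)$ it first remarks that uniqueness of the section through each point (together with the standing assumption that $\sigma$ has no subcover) forces $\sigma$ itself to be injective, which shortcuts your argument via transitivity of $\Pi$ on $\sigma^{-1}(q)$; your version is fine and in fact does not need that extra observation. For the metric in $(\Leftarrow)$ the paper simply asserts a ``warped product of a family of left-$\G$ right-$\H$ invariant metrics on $\G/\H$'' together with the given metric on $\Sigma$, without discussing the $\Pi$-equivariance of the family; the obstacle you flag is genuine when $\Ad(\Pi)$ is unbounded on $\fg/\fh$, and your averaging construction over the orbifold $\Sigma/\Pi$ (contractible convex fibre, finite isotropy, invariant partition of unity) is exactly the right way to supply what the paper leaves implicit.
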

\begin{proof}
We start with the Riemannian manifold $M=\G/\H \times_{\Pi} \
\Sigma$  on which $\G$ acts on the first component from the left.
Here $(g\H,p)$ is equivalent to $(gn^{-1}\H,np)$ for $n\in\Pi\cdot
\H\subset\G$. Since $\N(\H)/\H$ acts freely on $\G/\H$ on the right,
$\G/\H \times_{\Pi} \ \Sigma$ is a manifold. The image of
$({e},\Sigma)$ is a section if and only if the metric is induced by
a warped product metric of a family of left $\G$ right $\H$
invariant metrics on $\G/\H$ with the given metric on $\Sigma$. As
for the isotropy groups of this action we have
$\G_{[(eH,p)]}=\Pi_p$, where $\Pi_p$ is the stabilizer of $p$. Since
$\Pi_p$, being a discrete subgroup of $\O(T_p\Sigma)$, is finite, all
orbits are exceptional.

Conversely, assume that $\G$ acts polar on $M$ with section
$\Sigma$, polar group $\Pi$ and with all orbits exceptional.  The
section through every point is unique and hence $\sigma$ is
injective. We claim that $\pi\colon \G\times\Sigma\to M,\, (g,p)\to
g\sigma(p)$ induces an equivariant diffeomorphism of $\G/\H
\times_{\Pi} \ \Sigma$ with $M$. Clearly $\pi$ is onto. If
$p,p'\in\Sigma$ and $ g,g'\in\G$ with $g\sigma(p)=g'\sigma(p')$,
then $(g')^{-1}g\sigma(p)=\sigma(p')$.  By uniqueness of the
sections, $(g')^{-1}g=\gamma\in\Pi$ and since $\sigma$ is injective
$p'=\gamma p $ and $g'= g\gamma^{-1}$. This proves our claim.
\end{proof}

\smallskip

\begin{rem*}
(a)  Thus any discrete subgroup of a Lie group is  the polar group
of some (exceptional) polar manifold.

(b) It follows that a principal $\G$ bundle $P \to B$ with $\G$
compact is polar if and only if it has finite structure group. Note
that in this case a  section is not a section in the principal
 bundle sense, but rather a ``multiple valued" section: It is locally a section, and in fact a cover
  of the base.

  (c) Observe that  an exceptional $\G$ manifold $M$ (with $\G$ compact) has an invariant
metric of non-negative curvature if and only
 if $\Sigma$ has a $\Pi$ invariant metric of nonnegative curvature \end{rem*}

\bigskip

 In the remainder of the
paper we will be interested in polar manifolds which are not
exceptional.

\smallskip

We will use the following notation: $M_{\circ}$ is the
\emph{regular} part of $M$, i.e., the union of all principal orbits,
$M_e$ is the \emph{exceptional} part of $M$, i.e.,
 the union of all non-principal orbits with the same dimension as the
 principal orbits, and $M_s$ is the \emph{singular} part of $M$, i.e.,
  the union of all lower dimensional orbits.     $M_{\circ}$ is open and dense in $M$ and $M/\G$ . Recall that
   $\Sigma_{\circ} = \sigma^{-1} (M_{\circ})$ is also open and dense in $\Sigma$.

The maximal leaf of the (smooth) horizontal distribution in $
M_{\circ}$  through a
 point $p\in M_{\circ}$ is the connected component
  of $\Sigma_{\circ}$  corresponding to the (unique)
  section $\Sigma$  with $p\in \sigma(\Sigma)$. From now on, we will denote this component by $\Sigma_{\circ}(p)$.
   Moreover,  $\sigma: \Sigma_{\circ}(p) \to M$ is 1-1  and
$\pi\circ\sigma\colon \Sigma_{\circ}(p) \to M_\circ/\G$ is an
isometric covering.

  We  denote by $M^{\K}$ the fixed point
set of $\K$ (a totally geodesic submanifold) and by $M_{(\K)}$ the
orbit type, i.e. the union of all orbits with isotropy conjugate to
$\K$. The image of any set in $M$ we denote by a $*$, e.g.,
$M^*=M/\G$ and $M_{(\K)}^* =M_{(\K)}/\G$.

\smallskip

\begin{rem*}
In general, recall that the \emph{core} of a $\G$ manifold $M$ is
the subset $C(M) \subset M^{\H}$ of the fixed point set   of  $\H$
having non-empty intersection with the regular part $M_{\circ}$ (cf.
\cite{GS2}), and the \emph{core group} $\N(\H)/\H$ its stabilizer.
 Any connected component $C_{\circ}(M)$ of the core together with the induced action by
  $\G^*:=\G_{C_{\circ}(M)} / \H \subset \N(\H)/\H$  has the same significance as the
  core itself. Since clearly $\sigma(\Sigma) \subset C_{\circ}(M)$ for a core component
   $C_{\circ}(M)$, it follows that $M$ is polar if and only if the action of $\G^*$ on $C_{\circ}(M)$ is polar, and in
   this case the polar groups of $C_{\circ}(M)$ and of $M$ coincide.
   The principle isotropy of $\G^*$ is trivial, but the group $\G^*$ itself is in general not
   connected. Notice also that singular isotropy of $\G$ can become
   exceptional for $\G^*$.
\end{rem*}

\smallskip

We first recall some facts about the geometry of $M/\G$ that follow
from the slice theorem. If $S$ is a slice at $q\in M$ and $\K=\G_q$,
then $M_{(\K)}\cap S = M^{\K}\cap S$. A neighborhood of the point
$p^*=\G/\K$ in $M^*$ is isometric to $S/\K$ and hence $ M^{\K}\cap
S$ is isometric to a neighborhood of $p^*$ in $M_{(\K)}^*$.  In
particular, each component of $M_{(\K)}^*$ is a totally geodesic
(but typically not complete) submanifold in the quotient $M^*$.
Furthermore, if $c_{|[a,b]}$ is  a minimal geodesic between orbits,
then $\G_{c(t)}=Z_{\Im(c)}$ for all $t\in (a,b)$  since otherwise
$c$ could be shortened. Hence $M_\circ^*$ is convex, and $M^*$ is
stratified by (locally) totally geodesic orbit types.

 Let $\K=\G_q$ be an isotropy group where only the principal isotropy group $\H$ is
  smaller. We call such an isotropy group an \emph{almost minimal} isotropy
group. Then the action of $\K$ on the sphere orthogonal to
$T_q(M^{\K}\cap S)\subset T_qS $ has only one orbit type, namely
$\H$. As is well known, see, e.g., \cite{Br},p.198,  this implies that
the action of $\K$ on the sphere is either transitive, or it acts
freely, and $\K_\circ$ is one of the Hopf actions.
 But if $\G$ is a
polar action, the slice representation by $\K$ as well as $\K_\circ$
is polar. Since the horizontal space for the Hopf actions is not
integrable, it follows that if $\K$ acts freely, then it must be
finite, i.e., $\G\cdot q$ is an exceptional orbit.

 If $\K$ acts
transitively, $q^*\in M^*$ is a boundary point and $M^{\K}\cap
S\subset M^*$ is part of the boundary in $M^* = \Sigma^*$. Thus
$M^*_{(\K)}$ is part of the boundary and a connected component of
$M^*_{(\K)}$ is called an {\it open face} and its closure a  {\it
face} of the boundary.

\smallskip

We now observe that $M^*_s\subset
\partial M^*$. Indeed, if $\L$ is a singular isotropy of $\G$, the slice representation of $\L_\circ$
is polar and hence has singular isotropy. By induction, $\L$
contains an almost minimal $\K$ with $\K/\H$ not finite and hence
$M^*_{(\L)}$ lies in the closure of $M^*_{(\K)}$. Exceptional orbits
can also be part of the boundary: If $\K$ is almost minimal with
$\K/\H\simeq \Z_2$ and if the fixed point set of the involution has
codimension $1$ in the slice, then $M^*_{(\K)}$ is a face. An
exceptional orbit for which $\K$ contains such an involution lies in
$\partial M^*$ as well.  Of course there can be interior exceptional
orbits if $M$ is not simply connected, but their codimension is at
least $2$. We remark that $\partial M^*$ is also the boundary of
$M^*$ in the sense of Alexandrov geometry. Summarizing:
\smallskip

\begin{itemize}
\item $M^*$ is convex and stratified by totally geodesic orbit
types.
\item $M_s^*\subset \partial M^*$ and interior exceptional orbits
have codimension at least 2.
\end{itemize}

\bigskip
As for the geometry of the discrete group action  of $\Pi$ on
$\Sigma$, (see, e.g., \cite{Dav}),  it is common to consider the
normal subgroup $\Rho$ consisting of {\it reflections}. Here $r$ is
called a {\it reflection} if the fixed point set $M^r$ has a
component  $\Lambda$ of codimension 1, a totally geodesic reflecting
hypersurface called a \emph{mirror} for $r$.  Notice that we do not assume that $\Lambda$
separates $\Sigma$ into two  parts, as in \cite{Dav}. In particular,
$M^r$ can contain several reflecting hypersurfaces (and components
of lower dimension).
 Note though that if $\Sigma$ is simply connected,
degree theory implies that reflections separate. On the other hand,
sections are typically not simply connected, even if $M$ is.
 See
also \cite{AA2} for a general discussion of reflection groups,
 and reflections groups in which reflections separate.

\smallskip

We denote the set of all mirrors in $\Sigma$ by $\mathfrak{M}$. A
connected component of the complement of the
 set  of all mirrors in $\Sigma$ is called an \emph{open chamber}, and denote by $c$. The
closure $C = \bar c$ (in $\Sigma$) of an open
 chamber will be referred to as a {\it closed chamber}, or simply as a chamber of
 $\Sigma$.
 If $ \Lambda \in \mathfrak{M}$, we call the components of the intersection  $ C \cap \Lambda$ which
  contain open subsets of $\Lambda$
  a \emph{chamber face} of $C$. We can provide each chamber face
  with a label $i\in I$ and  will denote the face by $F_i$ and the
  corresponding reflection by $r_i$. Note though that different
  faces can correspond to the same reflection, i.e., possibly
  $r_i=r_j$.

 $\Rho$ (and hence $\Pi$) acts  transitively on the set of open
  chambers. In our generality, however, there may be non trivial elements $\gamma\in\Pi$ (even in $\Rho$)
  with $\gamma(C)=C$. We denote the subgroup of such elements by
  $\Pi_C$ (respectively $\Rho_C$). Clearly then $\Sigma/\Pi=C/\Pi_C$ and one easily sees that the
  inclusion $\Pi_C\subset\Pi$ induces an isomorphism  $\Pi_C/\Rho_C\simeq
  \Pi/\Rho$.

Recall that we denote by $\Sigma_{\circ}(p)$ the component of
$\Sigma_\circ$ containing $p$. We thus have:

\smallskip

\begin{itemize}
\item $C=$cl$(\Sigma_{\circ}(p))$ is locally the union of  convex sets with
 $\partial C=\cup_{i\in I}F_i$. Furthermore, $M^*=C/\Pi_C$, $\partial
 M^*=\partial C/\Pi_C$, and $\Pi_C/\Rho_C\simeq \Pi/\Rho$.
\item $\Pi_C$ preserve strata types, and in particular
 permutes chamber faces, possibly preserving some as well.
\end{itemize}

\smallskip

   A component of an  isotropy type $\Sigma_{(\Gamma)}$ of $\Pi$ (or of $\Rho$) is contained in
   $\Sigma^{\Gamma'}$ (as an open convex subset) with $\Gamma'$
  conjugate to $\Gamma$ since a slice at $p\in \Sigma_{(\Gamma)}$ is a neighborhood of $p$.
   It is contained
   in a mirror
   if $\Gamma$, and hence $\Gamma'$,
  contains a reflection $r\in\Gamma'$ since then $M^{\Gamma'}\subset M^r$ . Of course different components
  of $\Sigma_{(\Gamma)}$ can be contained in different mirrors, or even in the same mirror.
  In general, there can be isotropy in the interior of $c$, but the codimension of
  $M^\Gamma$ is then at least 2.
 Let
  $p\in M^\Gamma$ be contained in the $\Gamma$ isotropy type of $\Pi$ and
  $q=\sigma(p)$. Then $\K:=\G_q$ acts polar  on the slice with
  section $\sigma_*(T_p\Sigma)\subset T_q^\perp$ and polar group $\Pi_p\simeq\Gamma$. The fixed point
  set of $\K$ in the slice is $T_q(M^{\K}\cap S)\subset
  T_q^\perp$, and since every $\K$ orbit meets
  $\sigma_*(T_p\Sigma)$,  it follows that   $T_q(M^{\K}\cap S)\subset\sigma_*(T_p\Sigma)$,
  in fact for every $p\in\sigma^{-1}(q)$. Furthermore,
  $\K$ acts polar on the unit sphere $\Sph^\ell$ in $T_q(M^{\K}\cap
  S)^\perp$, without any fixed vectors. One easily sees that this
  implies that $\Gamma$ also has no fixed vector in $\Sph^\ell\cap\sigma_*(T_p\Sigma)$ and
  hence $T_q(M^{\K}\cap S)$ is also the fixed point set of $\Gamma$, in other words
   $\sigma(U)=M^\K\cap S$ for
  some neighborhood $U$ of $p$ in $M^\Gamma$.
Thus
\smallskip
\begin{itemize}
  \item $\pi\circ \sigma\colon\Sigma\to M^*$ takes orbit types of $\Pi$ to orbit
  types of $\G$ and chamber faces of $C$ to faces of $\partial
  M^*$. Furthermore, $\Pi_C$ acts transitively on the chamber
  faces in the inverse images of a face, and possibly
  non-trivial on a chamber face as well.
  \item For all points $p\in\Sigma$ lying in a component of an orbit
  type, the isotropy $\G_{\sigma(p)}$ is constant, and the slice
  representations are canonically isomorphic as well.
  \end{itemize}
  \smallskip
  The constancy of the isotropy will be crucial in our
  reconstruction.

  \smallskip

  Let $F$ be a face, $\Pi_{C,F}\subset\Pi_C$ the subgroup that stabilizes
  it, and $\Pi_{C,p}$ the isotropy at a point $p\in F$. Since $C/\Pi_C=M^*$, the orbit types of the
  action of $\Pi_{C,F}$ on $F$ are taken to the orbit types in
  $M^*$. Thus there exists an open and dense set of points in $F$, the regular points of the
  action of $\Pi_{C,F}$ on $F$, along which  the $\G$-isotropy is
  constant equal to $\K$. We call this the {\it generic} isotropy of
  $F$. Again from
  $C/\Pi_C=M^*$ it follows that:
  \smallskip
  \begin{itemize}
  \item If $ F$ is a face with generic isotropy $\K$, then at points $p$ in the interior of $F$ the isotropy
  $G_{\sigma(p)}=\K\cdot\Pi_{C,p}$.
  \end{itemize}
Such a generic isotropy is also an almost minimal isotropy, in
particular $\K/\H=\Sph^\ell$, and at a generic point $p$ the $\Pi$
isotropy $\Pi_p\simeq\Z_2$, generated by the reflection $r$ in the
face $F$. Of course $r\in\K$, in fact $r$ is the unique involution
in $(\N(\H)\cap\K)/\H$.

\smallskip

 It will be important for us to also consider the larger completion $C'$ of $c$ obtained from
  the intrinsic length metric on $c$, and the induced surjective map $C' \to
  C$, which induces a one-to-one correspondence between orbit types
  (but not necessarily components of orbit types). The advantage of considering $C'$ as opposed to $C$
  is that for $p\in\partial C$ the
  tangent cone $T_pC$ minus $p$ may not be connected, but these
  components are ``separated" in $C'$, i.e., $T_pC'$ minus $p$ is now
  indeed
  connected. In fact

  \begin{itemize}
  \item Each point of the boundary $\partial C'$
  has as a convex neighborhood the cone over the fundamental
  domain of the  reflection subgroup of the local polar group
  acting  on the normal sphere to the orbit.
  \end{itemize}

Due to this phenomena, $C$ may not be convex (see the example
below),
  though it is clearly
  a union of locally convex sets.
  This also means that although $C/\Pi_C$ is an Alexandrov space,
  $C$ itself may not be.

$\Pi_C$ acts on the interior of $C$ and  induces an isometric action
on the completion $C'$ with $C'/\Pi_C = C/\Pi_C$.
 We refer to it as the lift to $C'$.
The inverse image of chamber faces in $C$ are called chamber faces
of
  $C'$.
We now construct a Riemannian manifold $\Sigma'$ on which a Coxeter
group acts and in which $C'$ will be a convex subset. To do this,
first observe that the angle between any two chamber faces $F_i$ and
$F_j$
  along a component of a codimension 2 intersection (if it exists)
determine the order $m_{ij}$ (infinity if $F_i \cap F_j =
\emptyset$) of the rotation $r_i r_j$. Although the intersection may
have more than one component,  we claim that these integers are the
same for each component. Indeed, if $p$ is a generic point in a
component of $F_i\cap F_j$, then the action of $\G_{\sigma(p)}$ on
the sphere $\Sph^\ell$ normal to the orbit type is cohomogeneity
one.  The angle is the length of the interval
$\Sph^\ell/\G_{\sigma(p)}$, which is equal to the length  of the
section (a great circle) divided by the Weyl group. Furthermore, the
two non-regular isotropy groups are the generic isotropy groups of
$F_i$ and $F_j$. By constancy, the generic isotropy group of the
faces are the same on any other component of $F_i\cap F_j$. But this
implies that the Weyl group is the same as well since it is
generated by the two reflections in the non-regular isotropy.  We
can thus associate to $C$ a Coxeter group: $\M:=\{r_i\mid i\in I
\text{ with } r_i^2=1,\, (r_i r_j)^{ m_{ij}}=1\} $.  Notice that
there is a natural homomorphism $\M\to\Rho$ which is onto since
$\Rho$ is generated by the reflections $r_i$, Thus $\Rho$ is the
quotient of a Coxeter group. For polar actions, the corresponding
Coxeter matrix has special properties:
 As is
well known, the quotient $\Sph^\ell/\G_{\sigma(p)}^0$  is an
interval of length $\pi/n$ with $n=2,3,4,6$. The same is true for
$\Sph^\ell/\G_{\sigma(p)}$  if there are no exceptional orbits (see,
e.g., again \lref{linearisotropy} below), and possibly half as long
in general.

\smallskip

As usual, we can use the Coxeter group $\M$ to define a smooth
Riemannian manifold $\Sigma'$ on which $\M$ acts by taking the
quotient of $\M\times C'$ by the equivalence relationship
$(gr_i,x)\sim (g,x)$ for $x\in F_i,\ g\in\M$ which identifies
``adjacent" faces. One easily sees that $\Sigma'$ is smooth since
$\Sigma$ is, and $\M$ acts isometrically on $\Sigma'$, with
fundamental chamber $C'$, and simply transitively on chambers (in
particular $\M_{C'}$ is trivial). We thus have a natural map
$\Sigma'\to\Sigma$, taking $\M$ translates of $C'$ to $\Rho$
translates of $C$, which is an isometric covering, equivariant with
respect to the quotient homomorphism $M\to \Rho$. We can think of
$\Sigma'$ as the ``universal" section associated to $C$.

  Note that although combinatorially two polar manifolds may be the same, i.e., same $\M$ structure,
  as manifolds they of course depend on the topological structure of the fundamental domain $C$. Using this for
  example will result in very different polar manifolds via our reconstruction.

\smallskip

 These concepts are illustrated in the following

 \begin{example}

Consider the group $\Rho = \D_3 = \langle\r_{\circ}, \r_3\rangle$
acting on $\Sph^2$ as well as on $\RP^2$, where $\r_{\circ}, \r_3$ are
reflections in two great circles making an angle $\pi/3$:

\begin{itemize}

\item
On $\Sph^2$ there are three mirrors and six open chambers. Their
 closure is the orbit space $\Sph^2/\Rho$, a spherical biangle
 with angle $\pi/3$. There are two faces each of which are also
 a chamber face. Their intersection is the intersection of
 mirrors and coincides with the fixed point set $\Fix(\Rho)$.

\item On $\RP^2$ there are three mirrors and three open chambers.
 The closure of an open chamber is a spherical biangle with
 angle $\pi/3$ where the two vertices have been identified! The
 stabilizer $\Rho_C$ of a chamber has order two, fixes the ``mid
 point" of $C$ and rotates $C$ to itself, mapping one chamber
 face to the other. The orbit space has one face with one
 singular
  point, the fixed point of $\Rho$ and one interior singular
  point, the fixed point of $\Rho_C$. ($\tilde C$ is a spherical
  biangle with angle $\pi/3$ and $\Rho_C = \Z_2$ acts by
  rotation).

In this case, the reflection group obtained by lifting the
reflections in $\RP^2$ to reflections in $\Sph^2$ does not
contain the antipodal map, the deck group. If we extend it by
the antipodal map we get the same action on $\RP^2$ and the
 orbit spaces are of course the same as well.
\end{itemize}

Now consider the manifold $M$ with boundary consisting of six
circles, obtained from $\Sph^2$ by
 removing a small $\epsilon$ ball centered at the soul point of each chamber, and let $N$ denote
 its quotient by the antipodal map, having three boundary circles. Clearly,  $\Rho = \D_3$ preserves
  $M$ and induce an action on $N$.

\begin{itemize}

\item
 $\Rho = \D_3$ acts as a reflection group on the doubles D($M$)
  and D($N$) of $M$ and $N$ respectively. On D($M$) which is an
  orientable surface of genus five,  $\Rho_C = \Z_2$ acts freely
  on the chamber
   $C$. However, on its quotient D($N$),  $\Rho_C = \Z_2$ acts
   freely on the open chamber but fixes two points of its
   closure (on the corresponding $\tilde C$ is isometric to a
   chamber of D($M$), and $\Rho_C = \Z_2$ acts freely on it).
 \end{itemize}

 Now consider the group $\Pi = \Z_2 \times \Z_3$ acting on $\Sigma$ the double of a pair of pants,
  i.e., on a surface of genus two. Here $\Z_2$ is given by the reflection in the double, i.e.,
  mapping one pair of pants to the other, and $\Z_3$ rotates each pair of pants. In this example
  $\Rho = \Z_2$, there are two chambers a pair of pants, and the stabilizer groups of a chamber
  $C$ are $\Rho_C = \{1\}$ and  $\Pi_C = \Z_3$. In this example, $\Pi_C$ has two fixed points
  (as it does on the corresponding $\tilde C$).

 If we remove a disc at each fixed point of $\Z_3$ and join the two boundary circles by a cylinder,
 we get a new manifold, a surface of genus four with the same group acting, but where $\Pi_C$ acts
  freely on the chamber.

\begin{itemize}
\item
Note that the first example above can be realized concretely as
sections with polar groups actions: Let $\G = \SO(3)$ acting on
$\Sph^4$ with cohomogeneity one and orbit space an interval of
length $\pi/3$. Then the suspended action on $\Sph^5$ is polar
with section $\Sph^2$ and polar group $\Pi = \Rho= A_2$ the
dihedral
 group of order $6$. Its quotient action on $\RP^5$ has section $\RP^2$ with the same polar group but here $\Pi_C = \Z_2$
 as explained above.
 \end{itemize}

\end{example}

 \bigskip

One can refine the description of the reflection group $\Rho$. In
fact, $\Pi$ contains two natural normal reflection groups, $\Rho_s$
and $\Rho_e$, the \emph{singular} reflection group and the
\emph{exceptional} reflection group respectively. Here $\Rho_s$ is
generated by the reflections corresponding to faces consisting of
singular orbits, and $\Rho_e$ by the reflections corresponding
 to faces consisting of exceptional orbits. Clearly, $\Rho = \Rho_s \cdot
 \Rho_e$. The discussion above for chambers  can be carried our for either of the subgroups $\Rho_s$ and $\Rho_e$. This is particularly useful
 for $\Rho_s$, where we will use corresponding notation, such as $c_s, C_s$ and $C_s'$, and $\M_s$
  for the
 associated Coxeter group.

 \smallskip

We call a polar action {\it Coxeter polar} if there are no
exceptional orbits and $\Pi_C$ is trivial, in particular
$\Pi=\Rho=\Rho_s$ is a reflection group. Note though that $\Pi$
itself may not be an abstract Coxeter group, though it is a quotient
of  the Coxeter group $\M$ associated to $C$. Since $C/\Pi_C= M^*$,
we have:

\begin{prop} Let $\G$ be a Coxeter polar action on $M$ with polar group $\Pi=\Rho_s$. Then
 $\Pi$
acts freely and simply transitively on the set of chambers. The
chamber $C$ is a convex set isometric to $\Sigma/\Pi\simeq M/G$ with
  interior $c=\Sigma_\circ(p)$ isometric to $M_\circ^*$.
  Furthermore, the isotropy group along the interior of a chamber
  face $F_i$
  is constant equal to $\K_i$.
\end{prop}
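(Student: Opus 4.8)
The plan is to prove the four assertions in turn; the first and last are essentially formal, while the middle two use the geometry of $M^{*}$. By definition of a Coxeter polar action $\Pi=\Rho=\Rho_{s}$ and the chamber stabilizer $\Pi_{C}$ is trivial, hence so is the stabilizer of every chamber (they are all conjugate). Since $\Rho$ acts transitively on the set of open chambers, $\Pi$ therefore acts freely and transitively, i.e., simply transitively, on it. Next I would show $\Sigma_{\circ}=\Sigma\smallsetminus\bigcup_{\Lambda\in\mathfrak{M}}\Lambda$, so that the connected components of $\Sigma_{\circ}$ are precisely the open chambers and in particular $c=\Sigma_{\circ}(p)$. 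Points of a mirror are non-regular by \pref{genproperties}(c). Conversely, if $x\in\Sigma$ is non-regular and $q=\sigma(x)$, then, since there are no exceptional orbits, the Weyl group of the $s$-representation of $(\G_{q})_{\circ}$ --- a subgroup of the local polar group $\Pi_{x}$ by \pref{genproperties}(d) and \pref{linear} --- must be non-trivial and hence contains a reflection $\rho$ of the linear slice $\sigma_{*}(T_{x}\Sigma)$; since $\rho$ fixes the orbit-type component through $x$ pointwise and reflects a normal hyperplane, $\Fix(\rho)$ is a hypersurface near $x$, so $\rho\in\Rho$ and $x$ lies on a mirror. Finally, as $\Pi$ permutes the components of $\Sigma_{\circ}$ simply transitively, each $\Pi$-orbit meets $c$ in exactly one point, so the isometric covering $\pi\circ\sigma\colon\Sigma_{\circ}(p)\to M_{\circ}^{*}$ of \pref{genproperties}(c) has degree one, giving $c\simeq M_{\circ}^{*}$.

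For the convexity of $C$ in $\Sigma$ I would transport the problem to $M^{*}$. Recall that $M_{\circ}^{*}$ is convex in $M^{*}$ (a minimal geodesic between regular orbits has regular interior) and that $\pi\circ\sigma\colon\Sigma\to M^{*}$ is $1$-Lipschitz; combined with the isometry $c\simeq M_{\circ}^{*}$ this yields $d_{\Sigma}(x,y)=d_{M^{*}}(x^{*},y^{*})$ for $x,y\in c$, writing $x^{*}=\pi(\sigma(x))$. Hence any minimal $\Sigma$-geodesic $\gamma$ from $x$ to $y$ projects to a minimal $M^{*}$-geodesic between $x^{*},y^{*}\in M_{\circ}^{*}$, which stays in $M_{\circ}^{*}$; therefore $\gamma\subset(\pi\circ\sigma)^{-1}(M_{\circ}^{*})=\Sigma_{\circ}$, and being connected and meeting $c$ it lies entirely in $c$. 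A limiting argument, using completeness of $\Sigma$, then produces a minimal geodesic inside $C=\bar c$ joining any two of its points, so $C$ is convex.

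It remains to check that $\pi\circ\sigma$ identifies $C$ with $M^{*}=\Sigma/\Pi$. It is \emph{onto} because $\bigcup_{\gamma\in\Pi}\gamma C$ is a locally finite, hence closed, union containing the dense set $\Sigma_{\circ}$. For \emph{injectivity}, suppose $\gamma x=y$ with $x,y\in C$ and $\gamma\in\Pi$. If $x\in c$ (or symmetrically $y\in c$), then $y=\gamma x$ lies in the open chamber $\gamma c$, which is disjoint from $c$ yet, since $y\in\bar c$, must meet $c$ --- impossible unless $\gamma=1$. Otherwise $x,y\in\partial C$, and I would use that on a small normal slice $V_{x}$ at $x$ the set $C\cap V_{x}$ is the closure of a \emph{single} chamber of the finite reflection arrangement of $\Pi_{x}$ --- this single-chamber fact is precisely where $\Pi_{C}=\{1\}$ enters, since two such local chambers contained in $c$ would be related by an element of the reflection subgroup of $\Pi_{x}$ lying in $\Pi_{C}$ --- and likewise at $y$. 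Then $\gamma(C\cap V_{x})$ and $C\cap V_{y}$ are both closed chambers of the $\Pi_{y}$-arrangement on $V_{y}=\gamma V_{x}$, so the unique element $\rho$ of the reflection subgroup of $\Pi_{y}$ carrying the first to the second fixes the chamber $c$ (it takes $c\cap V_{x}$ onto $c\cap V_{y}$), i.e., $\rho\gamma\in\Pi_{C}=\{1\}$; hence $\gamma=\rho^{-1}$ fixes $y$, so $x=y$. Together with convexity and the isometry on $c$, this shows $C\simeq\Sigma/\Pi\simeq M/\G$. The last assertion is then immediate: the $\G$-isotropy is constant along each component of a $\Pi$-orbit type, and along the interior of a chamber face $F_{i}$ it equals $\K_{i}\cdot\Pi_{C,p}=\K_{i}$ since $\Pi_{C}=\{1\}$, where $\K_{i}$ denotes the generic isotropy of $F_{i}$.

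I expect the main obstacle to be the third step: one must show that in the Coxeter polar case $C$ coincides with the intrinsic length-metric completion $C'$ of $c$ --- equivalently, that $C$ is convex and does not ``wrap around'' along $\partial C$ (as it genuinely can when $\Pi_{C}\neq\{1\}$; see the $\RP^{2}$ example in the text). The convexity reduces cleanly to the already-established convexity of $M_{\circ}^{*}$ via the $1$-Lipschitz projection $\pi\circ\sigma$, but the injectivity of $\pi\circ\sigma|_{C}$ along $\partial C$ genuinely needs the local structure of chambers as Weyl chambers of the finite local polar groups, and it is there that the hypothesis $\Pi_{C}=\{1\}$ is used beyond the purely combinatorial statement about the set of chambers.
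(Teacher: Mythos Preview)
Your proposal is correct and follows essentially the same route as the paper, which in fact gives no standalone proof at all: the proposition is stated immediately after the sentence ``Since $C/\Pi_C = M^*$, we have:'' and is meant to be read as a direct specialization of the bulleted facts already established in Section~2 (in particular $M^* = C/\Pi_C$, the convexity of $M_\circ^*$, the constancy of isotropy along orbit-type components, and the formula $\G_{\sigma(p)} = \K\cdot\Pi_{C,p}$) to the case $\Pi_C = \{1\}$ with no exceptional orbits. Your argument unpacks exactly these ingredients --- the identification $\Sigma_\circ = \Sigma\setminus\bigcup\mathfrak{M}$ via the absence of exceptional orbits, the degree-one covering $c\to M_\circ^*$, and the transport of convexity from $M_\circ^*$ back to $c$ via the $1$-Lipschitz projection --- so the approaches coincide; you simply supply the details the paper leaves implicit, and your boundary-injectivity step (while correct) becomes redundant once convexity of $c$ is in hand, since then $C$ with $d_\Sigma$ is already the metric completion of $c\simeq M_\circ^*$.
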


We will see in fact that  a simply connected polar $\G$ manifold
   is Coxeter polar. This also follows from work of
  Alexandrino and T\"{o}ben on singular polar foliations (see \cite{AT} (for $\G$ compact) and \cite{A}).

Note that by the usual properties of a symmetric space, an
s-representations is Coxeter polar. For a general representation we
have:

\begin{lem}[Coxeter Isotropy]\label{linearisotropy}
Let $(V,\L)$ be a polar representation with chambers $C,C_s$, and
principal isotropy group $\H$. Then

\begin{itemize}
\item[(a)] $C_s$ is
the (Weyl) chamber of the Coxeter polar representation
$(V,\L_0)$.
\item[(b)] If $(V,\L)$ is Coxeter polar, then $\L$  is generated by the
isotropy groups $\K_i$ of the faces of $C (=C_s)$. Furthermore,
the representation   is determined, up to linear equivalence, by
$\L , \H$ and $\dim V$.
\item[(c)] $(V,\L)$ is Coxeter polar if and only if   it is orbit equivalent to
$(V,\L_0)$, i.e., $\L$ is generated by $\L_0$ and $\H$.
 \end{itemize}
\end{lem}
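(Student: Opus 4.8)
The plan is to reduce the entire statement to the structure of s-representations via Dadok's theorem (\pref{linear}), the bridge being the identity $\Rho_s=\W$, where $\W:=\Pi\cap\L_0$ denotes the Weyl group of the identity component $\L_0$ (which acts polar on $V$ with the same section $\Sigma$ and, being connected, is orbit equivalent to an s-representation by \pref{linear}(a), so $\W$ is a genuine Weyl group, generated by reflections, acting simply transitively on chambers with fundamental domain the Weyl chamber). For part (a) I would prove $\Rho_s=\W$ by double inclusion. For $\Rho_s\subseteq\W$: a generator $r_i$ of $\Rho_s$ is the reflection of a singular face $F_i$, whose generic isotropy $\K_i$ satisfies $\K_i/\H=\Sph^{\ell_i}$ with $\ell_i\geq1$; then $(\K_i)_0$ acts transitively on the connected normal sphere $\Sph^{\ell_i}$, so $r_i$ is already realized within $(\K_i)_0\subseteq\L_0$, giving $r_i\in\W$. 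For $\W\subseteq\Rho_s$: $\W$ is generated by its reflections $w$, and choosing a point $v$ on the mirror of $w$ that is generic for both $\W$ and $\Pi=\Rho$ (possible since distinct mirrors meet in codimension $\geq 2$), the local polar groups give $\langle w\rangle=\Pi_v=\langle r_i\rangle$ for the unique face $F_i$ through $v$, so $w=r_i$; moreover $v$ is singular, since an s-representation has no exceptional orbits, so the $\L_0$-orbit of $v$ — hence the equidimensional $\L$-orbit — is singular, making $F_i$ a singular face and $r_i\in\Rho_s$. Thus $\Rho_s=\W$, and $C_s$, being a fundamental domain for $\Rho_s$, is the Weyl chamber of $(V,\L_0)$.

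For part (c), suppose $(V,\L)$ is Coxeter polar, so $\Pi=\Rho=\Rho_s=\W$ by (a); in particular every element of $\Pi$ is represented in $\L_0$. Fix the regular point $v=\sigma(p)$, so $\L_v=\H$. Since all mirrors are singular, $\Sigma_\circ$ is the disjoint union of the open chambers, each mapped bijectively onto $M_\circ^*$ by $\pi\circ\sigma$ (\pref{genproperties}), and as $\Pi$ permutes the chambers freely and transitively the regular orbit $\L v$ meets $\Sigma$ in exactly $\Pi v$. Now take $g\in\L$: the orbit $\L_0(gv)$ is regular (a translate of the regular point $v$) and meets $\Sigma$ in a $\W$-orbit, which lies in $\L(gv)\cap\Sigma=\L v\cap\Sigma=\Pi v=\W v$, hence equals $\W v\subseteq\L_0 v$; so $gv\in\L_0(gv)=\L_0 v$, i.e. $g\in\L_0\L_v=\L_0\H$. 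This gives $\L=\L_0\H$, i.e. $(V,\L)$ is orbit equivalent to $(V,\L_0)$. Conversely, if $\L=\L_0\cdot\H$ then $\L w=\L_0\H w=\L_0 w$ for $w\in\Sigma$, and since every orbit meets $\Sigma$ the two actions have identical orbits on $V$; hence $(V,\L)$ inherits from $(V,\L_0)$ both the absence of exceptional orbits and the polar group $\W$ with trivial $\Pi_C$, so it is Coxeter polar. The reformulation is immediate since $\L_0$ is normal, whence $\langle\L_0,\H\rangle=\L_0\cdot\H$.

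For part (b), assume $(V,\L)$ Coxeter polar. By (c) one has $\L=\L_0\cdot\H$; every $\K_i$ contains $\H$; and $\L_0$ is generated by the groups $(\K_i)_0$ — this is the standard fact that for an s-representation the isotropy algebra is generated by the centralizer of the section together with the simple restricted-root subalgebras $\fk_{\alpha_i}$, each contained in $(\K_i)_0$ (one may first pass to an orbit-equivalent isotropy group, which only enlarges $\langle\K_i\rangle$). Hence $\L=\langle(\K_i)_0\rangle\cdot\H\subseteq\langle\K_i\rangle\subseteq\L$. For the last clause: by (c), $(V,\L)$ is orbit equivalent to the s-representation of $\L_0$, the cohomogeneity equals $\dim V-\dim\L+\dim\H$, and running through the classification of s-representations one checks that $\L_0$ together with this cohomogeneity determines the s-representation up to linear equivalence, after which $\H$ (via $\L=\L_0\cdot\H$) determines the full action; so $(V,\L)$ is determined by $\L$, $\H$, $\dim V$.

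The step I expect to be the main obstacle is the identity $\Rho_s=\W$ in (a): the inclusion $\Rho_s\subseteq\W$ hinges on realizing each face reflection inside the \emph{identity component} $(\K_i)_0$, which is exactly where polarity enters — the impossibility of Hopf slices (\tref{integrable}) forces $\K_i$, hence $(\K_i)_0$, to act transitively on the normal sphere — and one must be careful about the genericity of the point $v$ in the reverse inclusion. The only other place that needs genuine external input is the ``determined up to linear equivalence'' clause of (b), which relies on the classification of polar (equivalently, s-) representations.
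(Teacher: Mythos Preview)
Your arguments for (a) and (c) are correct and take a genuinely different route from the paper. For (a) the paper gives a one-line argument: $\L v$ is singular iff $\L_0 v$ is (same orbit dimensions), so the singular mirrors in $\Sigma$ coincide, hence $\Rho_s(\L)=\Rho_s(\L_0)$, and the latter is the full Weyl group since $(V,\L_0)$ is Coxeter polar. Your double-inclusion proof reaches the same conclusion but is more explicit. For (c) the paper \emph{derives} $\L=\L_0\cdot\H$ from the first clause of (b), whereas your orbit-counting argument (using $\Pi=\W$ and that the regular $\L$-orbit meets $\Sigma$ in a single $\W$-orbit) establishes (c) directly and independently of (b). This is a nice simplification: it avoids any appeal to Wilking's dual foliation theorem for part (c).

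The gap is in your proof of the first clause of (b). You claim that $\L_0$ is generated by the $(\K_i)_0$ because ``for an s-representation the isotropy algebra is generated by the centralizer of the section together with the simple restricted-root subalgebras,'' adding that one may ``pass to an orbit-equivalent isotropy group, which only enlarges $\langle\K_i\rangle$.'' But $\L_0$ need not itself be an s-representation --- it is only orbit equivalent to one, say $\S$, with $\L_0\subseteq\S$ by \cite{EH1}. The restricted-root decomposition gives $\S=\langle\K_i^{\S}\rangle$, and $\K_i^{\S}\supseteq\K_i$; this yields $\langle\K_i^{\S}\rangle\supseteq\langle\K_i\rangle$, which is the wrong direction for concluding $\L_0\subseteq\langle\K_i\rangle$. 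Your parenthetical does not repair this: enlarging the ambient group enlarges both sides and gives no information about the original inclusion. The paper handles this step by an entirely different mechanism, namely Wilking's dual foliation theorem \cite{Wi} applied to the orbit foliation on the unit sphere: any $g\in\L$ is connected to the identity by a chain of chambers sharing walls, so $g$ factors through wall isotropies (and $\Pi_C=\{1\}$ finishes). You could either invoke Wilking here, or --- since you already have (c) --- try to argue directly that $\langle\K_i\rangle$ is orbit-equivalent to $\L_0$ (both share the same face isotropies, hence the same singular orbits adjacent to $C$, and one can attempt to propagate); but as written the step is incomplete.
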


\begin{proof}
(a) Clearly an orbit $\L v$ is singular if and only if the orbit
$\L_0 v$ is. Thus for both, the group $\Rho_s$ is the same, and
hence have the same fundamental domain. But  $\L_0$ is orbit
equivalent to an s-representation and hence Coxeter polar.

 (b) We start by showing that $\L$  is generated by the face isotropy groups
 $\K_i$. Notice that, since there are no exceptional orbits, all
 isotropy groups on the interior of the face are equal to $\K_i$.
The claim that $\K_i$ generate $\L$ is proved for general polar
actions on simply connected positively curved manifolds in
\cite{FGT}, and we indicate a proof in our context. For this, fix $C$
and $g\in\L$, and let $\L^*$ be the group
generated by $\K_i$. Using Wilking's dual foliation theorem
\cite{Wi}, applied to the orbit foliation of $\L$ on the unit sphere
 $\Sph^\ell\subset V$, one obtains (from a suitable piecewise horizontal path from a point in $C$ to an point in $gC$) a sequence, $C= C_0, C_1, \ldots, C_n = gC$, where each $C_i$ is a chamber in some section, and consecutive $C_i$'s have a wall in common. It follows that  $C_{i+1} = k_i C_i$ for some $k_i$ in the isotropy group of that wall. From this description one also sees that each $k_i \in \L^*$, and hence $gC = k_1\cdot \dots \cdot k_nC$, with $k_1\cdot \dots \cdot k_n \in \L^*$. Since the action is polar, i.e.,
$\Pi_C$ is trivial, it follows that $g = k_1\cdot \dots \cdot k_n \in \L^*$.

Before proving the second part of (b), we prove part (c). If
$(V,\L)$ is orbit equivalent to $(V,\L_0)$, then it is clearly
Coxeter polar. Conversely, assume that  $(V,\L)$ is Coxeter polar.
As we just saw, the face isotropy $\K_i$ generate $\L$. Furthermore,
$\K_i/\H$ is connected since it is a sphere of positive dimension
due to the fact that $\K_i$ is almost minimal and singular. Thus
$\K_i$ is generated by $\K_i \cap \L_0$ and $\H$, which implies that
$\L_0$ and $ \H$ generate $ \L$. In particular, $\L$ and $\L_0$ have
the same principal orbits and hence are orbit equivalent.

To finish, we prove the second claim in part (b). For this,
 observe that from the classification of symmetric spaces (see ,e.g.,
\cite{Lo}) it follows that two irreducible s-representations by the
same (connected) Lie group $\S$ are linearly equivalent if they have
the same dimension. The same then clearly holds if the
s-representation is reducible. If $\L$ is connected, recall that
$\L$ is orbit equivalent to an s-representation by some Lie group
$\S$ and that in fact $\L\subset\S$. Furthermore, $\S$ is uniquely
determined by $\L$, since otherwise the two s-representations would
be orbit equivalent. This determines the representation of $\L$ as
well. If $\L$ is not connected, we just saw that it is generated by
$\L_0$ and $\H$ and that the representation of $\L_0$ is determined
by $\dim V$. As for the action of $\H$ on $V$, it acts trivially on
the section, and orthogonal to it as the isotropy representation of
a principal orbit $\L/\H$. Thus the representation of $\L$ is
determined as well.
\end{proof}

\begin{rem}
If $(V,\L)$ is polar, $\L$ lies in the normalizer of $\L_0$ in
$\O(V)$. For an irreducible s-representation, one finds a list of
the finite group $\N_{\O(V)}(\L_0)/\L_0$ in \cite{Lo} Table 10 (in
fact $\N_{\O(V)}(\L_0)$ is precisely the stabilizer of the full
isometry group of the simply connected symmetric space corresponding
to the s-representation). To see which extensions are Coxeter polar,
one needs to determine if a new component has a representative which
acts trivially on the section of $\L_0$. For example, for the
adjoint representation of a compact Lie group $\K$,
$\N_{\O(V)}(\L_0)/\L_0\simeq \Aut(\fk)/\Int(\fk)\cup\{-\Id\}$ and
one easily sees that the only extension which is possibly Coxeter
polar is the one by $-\Id$. For a simple Lie group this extension is
in fact Coxeter polar in all cases but $\SU(n), \SO(4n+2)$ and
$E_6$. An orbit equivalent subgroup of an s-representation can of
course have a larger normalizer and such normalizers can contain
finite groups acting freely on the unit sphere in $V$. This shows
that part (b) of \lref{linearisotropy} does not hold for general
polar representations.
\end{rem}

 \smallskip

Finally, observe the following. The exponential map at $\sigma(p),
p\in \partial C$ is $\G_{\sigma(p)}$ equivariant and recall that
orbit types are totally geodesic. This implies that it takes faces
of the slice representation to faces of $C$ containing $p$ in their
closure, and that the reflection group of $\G_{\sigma(p)}$  consists
of reflection in these faces. Furthermore, the  generic isotropy
groups of the faces are the same as well, and the tangent cone
$T_qC$ is a fundamental chamber for the slice representation. This
implies in particular:
\smallskip
\begin{itemize}
\item If $(M,\G)$ is Coxeter polar, then all slice representations
are Coxeter polar and hence $\G_{\sigma(p)}$ is generated by the
face isotropy groups containing $p$ in their closure.
Furthermore, the tangent cone $T_qC$ is the (metric) product of
the tangent space to the stratum with a cone over a spherical
simplex.
\end{itemize}

  Motivated by this, we call a domain $C$ a {\it Coxeter
domain} if it is a manifold with boundary, stratified by totally
geodesic submanifolds, such that at all points of a $k$ dimensional
stratum, the tangent cone  is the product of $\R^k$ with a cone over
a spherical simplex. Notice  that for any polar action
 $C'$ and $C_s'$ are Coxeter domains.

\bigskip


\bigskip

\section{Reconstruction for  Polar manifolds}

\bigskip

In the previous section we saw how to reconstruct an exceptional
polar manifold, i.e., one for which there are no singular orbits, or
equivalently $\Rho_s = \{1\}$, from $\G/\H$, a section $\Sigma$ and
 the associated polar group $\Pi = \Rho_e \cdot \Pi_C \subset \N(\H)/\H$, where $\H$ is the principal
  isotropy group of regular points in the section.

\smallskip

\medskip
\begin{center}
\emph{Reconstruction for Coxeter Polar manifolds}
\end{center}
\medskip

We will now discuss the reconstruction problem for  polar manifolds
and start with the simpler and most important case of Coxeter polar
manifolds. As we shall see, in this case one only needs to know the
isotropy groups and their slice representations along a
 chamber $C$.

 \smallskip

Recall that for a Coxeter polar manifold a chamber $C$ is a
 fundamental domain (in fact a Coxeter domain) for the polar group $\Pi=\Rho (= \Rho_s)$ and  $\pi\circ\sigma\colon C \to M/\G$ is an
 isometry.
 The chamber faces $F_i, i \in I$ of $C$ are uniquely identified with faces of
 $M^*$ and orbit types of $\Pi $ are identified with orbit types of $\G$. The interior of each face $F_i$ is an
 orbit space stratum which by abuse of notation we again denote by $F_i$.
  The corresponding isotropy group, which is constant,  will be
 denoted by $\K_i$. Similarly, the interior of intersections $F_{i_1} \cap F_{i_2} \cap  \ldots \cap F_{i_{\ell}}$
 (possibly empty) is a stratum denoted by $F_{i_1 ,i_2 \ldots i_{\ell}}$ with isotropy group (constant)
 denoted by
 $\K_{i_1, i_2 \ldots i_{\ell}}$.  Notice that this is true even if the intersection has more than
  one component
 since the slice representations are Coxeter polar and by \lref{linearisotropy} their face
 isotropy groups
  $K_{i_1},\dots,\K_{i_\ell}$
 generate $\K_{i_1, i_2 \ldots i_{\ell}}$.

We thus can regard the chamber $C$, or equivalently $M^*$, as being
\emph{marked} with an associated partially ordered \emph{group
graph}  where the \emph{vertices} are in one to one
  correspondence with the isotropy groups $\K_{i_1, i_2 \ldots i_{\ell}}$ along  the (non empty) strata, i.e.

\begin{equation}\label{vertices}
\bullet :   F_{i_1, i_2 \ldots i_{\ell}}  \longleftrightarrow   \K_{i_1, i_2 \ldots i_{\ell}}
\end{equation}

Moreover, a vertex $\K_{i_1 i_2 \ldots i_{\ell}}$ of ``cardinality"
$\ell$ is joined by an \emph{arrow} to each vertex $\K_{j_1 j_2
\ldots j_{\ell + 1}}$ of cardinality $\ell + 1$ containing ${i_1 i_2
\ldots i_{\ell}}$ as a subset and to no other vertices. In other
words

\begin{equation}\label{edges}
\bullet: \K_{i_1,\ldots \hat{ i_j} \ldots  i_{\ell + 1}} \longrightarrow
\K_{i_1, i_2 \ldots i_{\ell + 1}}, \quad j = 1, \ldots, \ell+1.
\end{equation}

 For isotropy groups this means that the closure of the strata for $F_{i_1\ldots \hat{ i_j} \ldots  i_{\ell + 1}}$
 contains the strata for $ F_{i_1, i_2 \ldots i_{\ell + 1}}$ and there are no strata in between.

\smallskip

We denote this group graph, or more precisely the marking of $C$, by
$\G(C)$. In \cite{GZ1} the group graph was called the group diagram
for a cohomogeneity one manifold with orbit space an interval and
denoted by $\H\subset \{\Km,\Kp\}\subset \G$. Here $\Kpm$  are
simply the markings of the endpoints.

\smallskip

Note that for a general group action one can consider a
corresponding graph where, however, the
 isotropy groups corresponding to components of orbit space strata need to be replaced by their
  conjugacy classes.

\smallskip

  There is a natural length metric on the graph $\G(C)$, in which each of the individual
   arrows has length 1. We will say that a vertex, $\K$ is at \emph{depth} $d$ if the distance from
   $\K$ to the principal vertex $\H$ is $d$. The depth of $\G(C)$ is defined to be the largest depth of all its vertices.
    Furthermore, the subgraph of the group graph for $\G$ consisting of all vertices
and arrows eventually
  ending at $\K$ will be called the  {\it history} of $\K$.

  \smallskip

In the case of a Coxeter polar action we  have the following
\emph{compatibility relations} induced by the exponential map (see
Section 2).

\begin{itemize}
\item For each vertex $F_{i_1 i_2 \ldots i_{\ell}}$, the slice
representation  is Coxeter polar with group graph the history of
$ \K_{i_1 i_2 \ldots i_{\ell}}$. The tangent cone of $C$ along a
point in the strata
 is a chamber of the slice representation
whose marking is induced by the marking on $C$.
\end{itemize}

 Also notice that the pull back of the metric on an $\e$
ball $C(q,\e)\subset C$ gives rise to a metric on an $\e$ ball in
the fundamental chamber of the slice representation. This metric, if
extended via the local polar group to a metric on $T_q\Sigma$, is
smooth near the origin.
 \smallskip

 Motivated by these properties
we define a \emph{Coxeter polar data}
 $D = (C, \G(C))$ as follows:

\begin{itemize}
\item
A  Coxeter domain $C$, i.e., a smooth Riemannian manifold with
boundary stratified by totally geodesic submanifolds.
\item
A \emph{partially ordered group graph} $\G(C)$ marking the
strata of $C$ and satisfying \eqref{vertices},
\eqref{edges}.
\item For each vertex $\F_{i_1, i_2 \ldots i_{\ell}}$ with isotropy $\K_{i_1, i_2 \ldots i_{\ell}}$ there exists a
  representation of $\K_{i_1, i_2 \ldots i_{\ell}}$ on a vector
space $V$ which is Coxeter polar and whose group graph is the
history of $ \K_{i_1, i_2 \ldots i_{\ell}}$.
\item There exists a smooth Riemannian metric $g_V$ on a small ball $B_\e(0)\subset V$ invariant under the polar group of the representation
 such that for all $q\in \F_{i_1, i_2 \ldots i_{\ell}} $ the
tangent cone  $T_qC$ is isometric to  the restriction of $g_V$
to a chamber of the representation $V$.
\end{itemize}
By \lref{linearisotropy}, the representation $V$ is in fact
determined by $\K_{i_1, i_2 \ldots i_{\ell}}$ and its history up to
linear isomorphism.

\smallskip

 We will now see how to
reconstruct the $\G$ manifold $M$ from these data, and how these
data by themselves defines a polar manifold. Note that the latter,
by definition involves constructing a (complete)
 Riemannian metric and a section for the action, restricting to the given metric on  $C$.

\begin{thm}[Coxeter Reconstruction]\label{coxeterreconstruction}
Any set of smooth Coxeter polar data $D = (C,\G(C))$  determines a
Coxeter polar $\G$ manifold $M(D)$ with orbit space $C$. Moreover,
if $M$ is a Coxeter polar manifold with data $D$ then $M(D)$ is
equivariantly diffeomorphic to $M$.
\end{thm}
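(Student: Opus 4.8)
The plan is to assemble $M(D)$ by gluing together the local slice models prescribed by the data, organizing the gluing as an induction on the depth of $\G(C)$, to equip $M(D)$ with a polar metric by invoking \tref{metrics}, and finally to match a given Coxeter polar $M$ with $M(D)$ piece by piece. First one reads the polar group off the data: for each face $F_i$ the vertex group $\K_i\subset\G$ is almost minimal and singular, so $(\N(\H)\cap\K_i)/\H$ contains a unique involution $r_i$, and $\Pi=\Rho$ is the subgroup of $\N(\H)/\H$ generated by the $r_i$; the deeper vertex groups $\K_{i_1\ldots i_\ell}$ are generated by $\K_{i_1},\dots,\K_{i_\ell}$, consistently with \lref{linearisotropy}(b). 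The section will emerge as the developing cover $\Sigma$ of Section~2: if $\M$ is the Coxeter group attached to $C$ by its codimension-two angles and $\Sigma'=\M\times C/\!\sim$ is obtained by gluing adjacent faces, then, because the $r_i$ are \emph{concrete} elements of $\N(\H)/\H$ rather than abstract reflections, the relevant section is $\Sigma=\Sigma'/\ker(\M\twoheadrightarrow\Rho)$, a smooth complete Riemannian manifold on which $\Pi$ acts isometrically and properly discontinuously with $\Sigma/\Pi\cong C$; smoothness across the strata uses that $C$ is a Coxeter domain and that the metrics $g_V$ in the data reflect smoothly across the mirrors of $V$.

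To construct $M(D)$, observe that over $q$ in the open stratum $F_{i_1\ldots i_\ell}$ the data provide $\K:=\K_{i_1\ldots i_\ell}$, a Coxeter polar representation $(V,\K)$ whose group graph is the history of $\K$, and a $\K$-invariant metric $g_V$ near $0\in V$ restricting on a chamber $W\subset V$ to a neighborhood of $q$ in $C$; the local model over a neighborhood of the orbit of $q$ is $\G\times_\K B_\e(V)$, a smooth $\G$-manifold with orbit space $B_\e(V)/\K$ identified with a neighborhood of $q$ in $C$ and with local section the image of $\{e\}\times W$. I would build $M(D)$ by induction on the depth $d$ of $\G(C)$, starting from the principal part $\G/\H\times c$ and, assuming $M(D)$ already built over the strata of depth $<d$, gluing in for each minimal (depth-$d$) stratum $S$ the model $\G\times_{\K_S}B_\e(V_S)$: this is consistent because the compatibility relations say that the data induced on the unit sphere of $V_S$ minus its $\K_S$-fixed directions is again Coxeter polar, with vertices exactly the depth-$<d$ vertices of the history of $\K_S$, hence already handled. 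On overlaps the slice theorem, applied inside the larger of the two slice representations, furnishes canonical $\G$-equivariant isometries compatible with the identifications of orbit spaces with subsets of $C$; these satisfy the cocycle condition since there is no freedom in them -- a section is determined by one of its tangent spaces (\pref{genproperties}(a)) and a Coxeter polar representation by its group and dimension (\lref{linearisotropy}). Thus $M(D)$ is a well-defined smooth $\G$-manifold, polar with section $\Sigma$; patching the $g_V$-induced metrics by a $\G$-invariant partition of unity gives a complete invariant metric for which $\Sigma$ is a section, and then \tref{metrics}, applied to the $\Pi$-invariant metric on $\Sigma$ that restricts to the given metric on $C$, upgrades this to a polar metric on $M(D)$ restricting to that metric. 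By construction $M(D)$ is Coxeter polar with polar data $D$.

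For the converse, let $M$ be Coxeter polar with data $D=(C,\G(C))$. By \pref{genproperties} and Section~2, a chamber of $M$ is isometric to $M/\G\cong C$, the isotropy groups and slice representations along its strata are precisely the ones recorded in $\G(C)$, and near every orbit the slice theorem exhibits $M$ as glued from exactly the local models $\G\times_{\K_q}B_\e(V_q)$ along exactly the canonical slice identifications used to build $M(D)$. Hence the piecewise identity assembles to a $\G$-equivariant diffeomorphism $M(D)\to M$.

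The main obstacle is the gluing step: proving that the local models fit together consistently, i.e.\ that the overlap maps are canonical and satisfy the cocycle condition. This forces a systematic use of the constancy of the isotropy along each stratum, of the compatibility relations expressing each slice representation's group graph as a history, and of the rigidity of Coxeter polar representations in \lref{linearisotropy}; keeping the bookkeeping straight under the induction on depth, and obtaining a metric that is smooth along the deepest strata -- where one leans on \tref{metrics} rather than on a direct computation -- are the remaining delicate points.
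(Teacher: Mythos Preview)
Your plan is essentially the paper's: cover $C$ by tubes $\G\times_{\K}B_\epsilon^*$, glue via the canonical orbit identifications coming from the constancy of isotropy along strata, and use a partition of unity for the metric. Your induction on depth is an alternative packaging of the same gluing; the paper simply covers $C$ by balls $C(p,\epsilon)$ and glues all tubes at once, identifying orbits over $q\in C(p_1,\epsilon_1)\cap C(p_2,\epsilon_2)$ with $\G/\G_q$ canonically. Your upfront description of the section as $\Sigma'/\ker(\M\twoheadrightarrow\Rho)$ is more explicit than the paper's ``developed via repeated reflection'', but amounts to the same thing.

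There is one real slip, concerning where \tref{metrics} enters. The data do \emph{not} hand you a $\K$-invariant metric $g_V$ on a ball in $V$; they give only a metric on a ball in the \emph{section} $\Sigma_\K\subset V$, invariant under the local polar group $\Pi_\K$, whose restriction to a Weyl chamber matches the tangent cone of $C$. The paper's essential step is to apply Mendes' theorem \emph{locally}, to the polar representation $(V_\K,\K)$, so as to extend this $\Pi_\K$-invariant section metric to a $\K$-invariant metric on the full ball $B_\epsilon^*\subset V_\K$ for which $\Sigma_\K$ is still a section. Only after this does the submersion metric on $\G\times_\K B_\epsilon^*$ (biinvariant on $\G$, the Mendes extension on $B_\epsilon^*$) make the tube a polar $\G$-manifold with local section $\{e\}\times B_\epsilon$. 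You instead assume $g_V$ is $\K$-invariant from the start and postpone Mendes to a single global application; but that global invocation presupposes $M(D)$ is already polar with section $\Sigma$, which you cannot establish without first having polar metrics on the tubes. Move the use of \tref{metrics} into the construction of each tube; once that is done the partition-of-unity metric already restricts to the given metric on $C$, so your final global appeal to \tref{metrics} becomes unnecessary.
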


\begin{proof}
We start with polar data $D = (C,\G(C))$ with a compatibly
smooth Riemannian metric on $C$ and will construct $M(D)$ as the
union of ``tubular neighborhoods" of the ``orbits" $\G/\K$,
analogous to the slice theorem. Start with a vertex $\K_{i_1 \dots
i_{\ell}}$
 and a
point $p \in F_{i_1 \dots i_{\ell}}$.
 By the compatibility condition, associated to $\K = \K_{i_1 \dots  i_{\ell}}$ we have a linear Coxeter polar
representation $V_{\K}$ with section $\Sigma_\K$ and a smooth metric
on $B_\e(0)\subset\Sigma_\K$ invariant under $\Pi_\K$ which on the
fundamental domain  restricts to the metric on the tangent cone
$T_qC$. \lref{linearisotropy} implies that this representation is
uniquely determined by the history of $\K$.

We can now use the metric extension \tref{metrics} to find a  smooth
$\K$-invariant metric on the $\e$ ball $B^*_\epsilon(0)\subset
V_{\K}$ which restricts to the metric on $B_\epsilon(0)$, and such
that $\Sigma_\K$ is  a section. Notice that although the metric on
$B_\e$ is not complete, the extension \tref{metrics} is  a local
statement about smoothness near a singular orbit and thus can be
applied in our situation as well (see the proof in \cite{Me}).

We now consider the associated homogeneous vector bundle $\G
\times_{\K} V_{\K}$ or more precisely the disc bundle $\G
\times_{\K} B_\epsilon^*$. We endow $G\times B_\epsilon^*$ with a
product metric of a biinvariant metric on $\G$ and the above $\K$
invariant metric on $B_\epsilon^*$. Under the Riemannian submersion
$\pi\colon\G \times B_\epsilon^*\to \G \times_{\K} B_\epsilon^*$
this induces a metric on the tube $M(p,\epsilon):=\G \times_{\K}
B_\epsilon^*$ and $G$ acts isometrically via left multiplication in
the first coordinate. Since the orthogonal complement of a $\G$
orbit in $M(p,\epsilon)$ is the orthogonal complement of a $\K$
orbit in $B_\epsilon^*$, $\{e\}\times B_\epsilon$  is horizontal in
the Riemannian submersion $\pi$. Thus the action of $\G$ on
$M(p,\e)$ is polar with local section $\pi(\{e\}\times B_\epsilon)$
and fundamental domain isometric to $C(p,\epsilon)\subset C$.

Altogether, we have constructed for any $\K \in \G(C)$ and any point
$p$ in the $\K$ strata
 a Coxeter polar manifold
$M(p,\epsilon)$  with  chamber $C(p,\epsilon)$.
 Moreover, if for some $\L \in \G(C)$ and  $q \in C(p,\epsilon)$ is also in the $\L$ strata,
  the orbit $\G q$
  is canonically identified with $\G/\L$ with $q$ corresponding to $[\L] \in \G/\L$. For two points
  $p_i$ with non-empty intersection  $C(p_1,\epsilon_1) \cap C(p_2,\epsilon_2) $ this provides a
  canonical equivariant identification of the corresponding orbits in
  $M(p_i,\epsilon_i)$. Thus we obtain a $\G$ manifold $M(p_1,\epsilon_1) \cup M(p_2,\epsilon_2)$
  with section $C(p_1,\epsilon_1) \cup C(p_2,\epsilon_2)$.
 Clearly then, using a
 partition of unity associated with a cover $\{ C(p,\epsilon)\}$ produces a smooth $\G$
manifold $M(D)$. Since the orbits by construction are
 (closed) embedded sub manifolds and the orbit space is complete,
 so is $M(D)$. Notice that a section
 $\Sigma$, and the polar group $\Pi$
with its action $\Sigma$, is ``developed" via repeated reflection.
To see that the metric on the section  is complete, we can, e.g.,
apply \tref{integrable} since the horizontal distribution on
$M(D)_\circ$ is integrable by construction.

To see that the action of $\G$ on $M(D)$ is Coxeter polar, first
observe that it has no exceptional orbits since by assumption, the
slice representations have this property. Furthermore, $M(D)/\G=C$,
and hence $C$ is a fundamental domain for the polar group $\Pi$
acting on $\Sigma$. By construction $\Pi$ contains all the
 reflections in the (chamber) faces of $C$. Along each stratum, these
 generate by assumption (the local data are Coxeter) a reflection group of
 the normal sphere to the stratum and hence act freely and transitively on
  chambers in the normal sphere. If $\Rho$ is larger than the group generated
   by the reflections in the faces of $C$, then a chamber for $\Rho$ would be
    properly contained in $C$ and hence $C$ could not be the orbit space.
    Similarly $\Pi_C$ must be trivial since again $C$ is the orbit space
    $C/\Pi_C$.

\smallskip

 Conversely, if $D$ is the Coxeter polar data of a given manifold
 $M$, the construction above,
 and the tubular neighborhood theorem for the $\G$ action on $M$,
 clearly gives a canonical
  identification of $M$ with $M(D)$ which is a $\G$ equivariant diffeomorphism.
\end{proof}

\begin{rem*} (a) Notice that if we start with a section $\Sigma$ on which $\Pi$
acts isometrically with $\Pi_C=\{e\}$, and  mark the fundamental
domain $C$ with some group graph, the new section in $M(D)$
constructed above may not be equal to $\Sigma$. However, all
sections are covered by the universal section $\Sigma'$ associated
to the Coxeter matrix $\M$  defined in terms of the faces of $C$ and
their angles between them (see Section 2).
 In particular, the section is unique if
$\Pi=\M$. This happens, e.g., for any Coxeter polar manifold with a
fixed point since the polar group is clearly the local polar group
at the fixed point.

Notice also that the group graph
$\G(C)$ is already completely determined by the (almost minimal)
isotropy groups $\K_i$ since they generate all other vertex groups.
Also $\Pi$ is completely determined by the group graph since it is
generated by the reflections in the faces $F_i$ of $C$. As elements
in $\N(\H)/\H$, they are  the unique involutions in $(\N(\H)\cap
\K_i)/\H$ (recall that $\K_i/\H$ is a sphere). In particular, $M$ is
compact if and only if $C$ is compact and $\Pi$ is finite. On the
other hand, one can change the metric on $C$, or the conjugacy
classes of the vertex groups in $\G(C)$, without changing the
equivariant diffeomorphism type of $M(D)$. But such a change can
change the topology of $\Sigma$ and the polar group $\Pi$. It is an
interesting question if, via such a change, one can make the section
embedded, and if the manifold and group is compact, one can make the
section compact as well. This is easily seen to be true for a
cohomogeneity one action.

 (b) We can of course simply enlarge
 the group $\G\subset \L$ to obtain new polar data $D'(C,L(C))$ with the same
 isotropy groups. The new manifold can be regarded as $M\times_{\G}
 \L$. Conversely, the polar group action is called not primitive (and primitive otherwise) if there exists
 a $\G$-equivariant map $M(C,\G(C))\to \G/\L$ for some subgroup
 $\L\subset\G$. The fiber over the coset $[\L]\in \G/\L$ is then
 polar with data $(C,\L(C)$, i.e., all $\G$ group assignments are
 contained in $\L$. The section and polar group is the same for both
 polar actions.

(c) If we denote the inverse of $\pi\circ\sigma_{|C}$ by $s\colon
M^*\to
 C\subset\Sigma$, then $\sigma\circ s$ provides a ``splitting" for $\pi\colon M\to
 M^*$. In particular, the orbit type $\N(\K)/\K$ principle bundles $
 M^\K\cap M_{(\K)}\to M^*_{(\K)}$ have a canonical section. See also \cite{HH},
 where they examined group actions with such a splitting
  in the topological category, and a reconstruction of the
  topological space with an action of $\G$ as well. But in this
  category there is no compatibility condition.

(d) We point out that under very special assumptions, this Theorem
was claimed (without proof) in the cohomogeneity two case in
\cite{AA2}.
\end{rem*}

\medskip
\begin{center}
\emph{Reconstruction for General Polar manifolds}
\end{center}
\medskip

We will now address the general case, where both
$\Rho_s$ and $\Pi_C$ are
 non-trivial (recall that  $\Rho_C$ could be non-trivial as well).  Rather than considering $C$
 we will consider $C_s$ or better its Coxeter
 completion $C_s'$ together with the (induced) action by
 $\Pi_{C_s}$. We point out that replacing $C$ by $C_s$ in our
 reconstruction is linked to the properties of linear polar
 actions in \lref{linearisotropy}.
Note that $C_s$ is tiled by copies of $C$ under exceptional
reflections and   $\Pi_C\subset\Pi_{C_s}$, i.e. $\Pi_{C_s}$ is
nontrivial as well.
 Using the reconstruction for Coxeter
  polar manifolds we will now construct a cover which is  Coxeter polar.

\begin{thm}[Coxeter Covers]\label{cover}  Given a polar manifold $M(C,\G(C))$ with polar
group $\Pi$ and completion $C_s'$, there exists a Coxeter polar
manifold $M(C_s',\G'(C_s')$ with polar group $\Rho_s$ and a free
action by $\Pi_{C_s}'$, commuting with $\G$, whose quotient is our
$\G$ manifold $M$.
\end{thm}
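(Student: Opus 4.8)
The plan is to reduce everything to the Coxeter reconstruction, \tref{coxeterreconstruction}. First I would build Coxeter polar data $D'=(C_s',\G'(C_s'))$ on the Coxeter completion $C_s'$ of the singular chamber $c_s$ (a Coxeter domain, by the last remarks of Section~2): its boundary $\partial C_s'$ is the union of the singular faces and their intersections, and I mark the strata of $\partial C_s'$ by the singular isotropy groups $\K_i$ of $M$ together with their slice representations — and by the subgroups these generate along the deeper strata, as forced by \lref{linearisotropy}(b) — while all interior strata are marked by the principal isotropy $\H$ with trivial slice. Admissibility of $D'$ is exactly the compatibility conditions of Section~2: along a singular stratum the relevant linear representation is the one attached by \lref{linearisotropy}(a) to the corresponding singular slice representation of $M$, which is Coxeter polar with precisely the prescribed history (one uses here that $\K_i/\H$ is a sphere, so $\K_i=(\K_i)_\circ\cdot\H$ and \lref{linearisotropy}(c) applies), and the metric $g_V$ comes from the $\G$-invariant metric of $M$, restricted to $C_s\subset M^*$ and extended as in the proof of \tref{coxeterreconstruction} via \tref{metrics}. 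Applying \tref{coxeterreconstruction} to $D'$ produces a Coxeter polar $\G$-manifold $\tilde M:=M(D')$ with orbit space $C_s'$, a section $\tilde\Sigma$, and polar group the reflection group $\Rho_s$ generated by the reflections in the faces of $C_s'$.

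Next I would exhibit the residual symmetry. The group $\Pi_{C_s}$ acts on $c_s$, hence, passing to the length completion, on $C_s'$ (its lift); this action is isometric for a metric we may take $\Pi_{C_s}$-invariant — the metric on $C_s\subset M^*$ is the restriction of a $\G$-invariant one — and it preserves the marking $\G'(C_s')$, since $\Pi_{C_s}$ permutes the orbit types of $\Pi$, hence the associated isotropy groups and slice data. Now the reconstruction of \tref{coxeterreconstruction} is natural: a marking- and metric-preserving isometry between open subsets of the Coxeter domain induces a canonical $\G$-equivariant isometry between the corresponding open subsets of the reconstructed total space, because the local models $\G\times_\K B_\K^*$ are themselves determined by the local data (the representation $V_\K$ being unique, by \lref{linearisotropy}). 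Patching these local identifications over a $\Pi_{C_s}$-invariant cover of $C_s'$ lifts the action of $\Pi_{C_s}$ on $C_s'$ to an effective action of $\Pi_{C_s}$ on $\tilde M$ by isometries commuting with $\G$, and this action is properly discontinuous because the $\G$-action on $\tilde M$ is proper and the action on $C_s'$ is.

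It remains to see that this action is free with quotient $M$. The tube-by-tube construction provides a natural $\G$-equivariant continuous surjection $\Phi\colon\tilde M\to M$ whose fibres are exactly the $\Pi_{C_s}$-orbits: over a point $q\in C_s'$ with $\tilde M$-isotropy $\K$, $\Phi$ is the canonical $\G$-map $\G/\K\to\G/(\K\cdot\Pi_{C_s,q})$ onto the orbit of $M$ there, $\K\cdot\Pi_{C_s,q}$ being the isotropy of $M$ by the analysis of Section~2. Suppose some $1\neq\gamma\in\Pi_{C_s}$ fixed a point $\tilde x\in\tilde M$, and put $\K=\G_{\tilde x}$. As $\gamma$ commutes with $\G$ it fixes the orbit $\G\tilde x$ pointwise and acts on a slice $S\subset V_\K$ at $\tilde x$ by an orthogonal, $\K$-equivariant transformation which is nontrivial (were it trivial, $\gamma$ would be the identity on a tube, hence on the connected manifold $\tilde M$, contradicting effectiveness). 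Then $\Pi_{C_s,\tilde x}$ is a nontrivial finite group acting effectively and linearly on $S$, so $S/\Pi_{C_s,\tilde x}$ is not a smooth boundaryless manifold, and hence neither is $M=\tilde M/\Pi_{C_s}$ near $\Phi(\tilde x)\cong\G\times_\K(S/\Pi_{C_s,\tilde x})$ — a contradiction; on the regular part this argument reduces to \pref{genproperties}(c). Therefore $\Pi_{C_s}$ acts freely, $\Phi$ is a $\G$-equivariant covering, and the stratum-by-stratum comparison of $\G$-tubular neighbourhoods, exactly as in the uniqueness half of \tref{coxeterreconstruction}, upgrades $\Phi$ to a $\G$-equivariant diffeomorphism $\tilde M/\Pi_{C_s}\cong M$.

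The step I expect to be the crux is the naturality/extension of the second paragraph together with the freeness in the third: one must know that the residual chamber symmetries $\Pi_{C_s}$ genuinely extend to the reconstructed $\tilde M$ and then act without fixed points. This is exactly where the Coxeter nature of $\tilde M$ — its polar group $\Rho_s$ acts with trivial chamber stabiliser, and all its slice representations, hence all its local polar groups, are reflection groups — and the freeness of the polar group of $M$ on the regular part of the section, \pref{genproperties}(c), are indispensable; everything else is a faithful transcription of the slice-theorem bookkeeping already carried out for \tref{coxeterreconstruction} and \tref{exceptional}.
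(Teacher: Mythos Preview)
Your overall architecture is exactly that of the paper: build Coxeter data on $C_s'$ using the subgroups generated by the singular face isotropies (as forced by \lref{linearisotropy}), reconstruct $\tilde M$ via \tref{coxeterreconstruction}, lift the $\Pi_{C_s}$ action using that it permutes strata and conjugates the group assignments, prove freeness, and compare the quotient with $M$ orbit-by-orbit. The paper even gives the explicit orbit map $g\K_{i_1\ldots i_\ell}\mapsto g\gamma^{-1}\K_{j_1\ldots j_\ell}$ that your ``naturality'' paragraph is implicitly invoking.

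The gap is in your freeness argument. You assume $\gamma$ fixes $\tilde x$, deduce that $\gamma$ fixes $\G\tilde x$ pointwise and acts nontrivially on the slice $S$, and then claim that $M$ near $\Phi(\tilde x)$ looks like $\G\times_\K(S/\Pi_{C_s,\tilde x})$ and hence fails to be a smooth boundaryless manifold. But at this stage your $\Phi$ is only a continuous bijection $\tilde M/\Pi_{C_s}\to M$, so you can only conclude a \emph{homeomorphism}, and a quotient $S/\Gamma_0$ by a nontrivial finite linear group can perfectly well be a topological manifold (e.g.\ $\R^2/\{\pm 1\}$ is homeomorphic to $\R^2$); moreover the actual tube in $M$ is $\G\times_{\K\cdot\Gamma_q}S$, not $\G\times_\K(S/\Gamma_0)$, so no contradiction arises. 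Thus the argument does not close.

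The paper's freeness argument avoids this entirely with a one-line algebraic observation: if $\gamma$ fixes $x\in C_s'$, then $\gamma$ normalizes the marking $\K=\K_{i_1\ldots i_\ell}$, and its action on the orbit $\G/\K$ is right translation by $\gamma^{-1}\in\N(\K)/\K$; since $\N(\L)/\L$ always acts freely on $\G/\L$ from the right, there are no fixed points on that orbit. This bypasses any appeal to the smooth structure of $M$ and is the step you should substitute for your third paragraph.
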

\begin{proof}
Recall that $C_s'$ is indeed a Coxeter domain. We  now need to
assign a group graph to $C_s'$ satisfying the compatibility
conditions in \tref{coxeterreconstruction} . Recall that to each
face $F_i$
 we associate the generic isotropy group $\K_i$ of the open chamber face.
 Consider a non-empty
 intersection of faces
 $F_{i_1 ,i_2 \ldots ,i_{\ell}}=F_{i_1} \cap F_{i_2} \cap  \ldots \cap F_{i_{\ell}}$
 and pick a generic point $p$ in its interior. The isotropy
 $G_{\sigma(p)}$ acts on the slice at $p$  and its generic isotropy groups
 are (via the exponential map) the isotropy groups $\K_{i_1}, \ldots,
 \K_{i_{\ell}}$. Let $\K_{i_1 \ldots i_\ell} \subset G_{\sigma(p)}$
 be the subgroup generated by these generic isotropy groups, and
 restrict the slice representation to $\K_{i_1 \ldots i_\ell} $ as
 well. By \lref{linearisotropy}, this representation of $\K_{i_1 \ldots i_\ell}
 $ is now Coxeter polar.
 All together, these isotropy groups
define the Coxeter data $D'=(C_s', \G(C_s'))$ for a polar $\G$
manifold $M'=M(D')$ with  orbit space $C_s'$. One easily checks that
the compatibility conditions are all satisfied. Note also that by
construction, the polar group of $M'$ is the reflection group
$\Rho_s$ for
 $M$, a normal subgroup of $\Pi=\Rho_s \cdot \Pi_{C_s}  \subset
 \N(\H)/\H$ with
    $\Rho_s \cap \Pi_{C_s} = {(\Rho_s)}_{C_s}$.

\smallskip

Now consider the stabilizer group $\Gamma : = \Pi_{C_s} \subset \Pi$
of $C_s$ and its action on $C_s'$ as well as on orbits or strata of
orbits: Let $x, y$  lie in two strata of $C_s'$ (possibly the same)
 with generic isotropy $\K_{i_1
\ldots i_\ell}$ and $\K_{j_1 \ldots j_\ell}$. If  $y=\gamma x$ with
$ \gamma\in\Gamma$, then we of course have $\G_y=\gamma
G_x\gamma^{-1}$, i.e  $\gamma \K_{i_1 \ldots i_\ell}\gamma^{-1}
=\K_{j_1 \ldots j_\ell}$. We  therefore have a natural
identification of the orbit through $x$ with the orbit through $y$
via $g\K_{i_1 \ldots i_\ell}\to g\gamma^{-1} \K_{j_1 \ldots
j_\ell}$. This defines an  action of $\Gamma$ on $M'$ commuting with
$\G$. We can choose a metric on $M'$ as in the proof of
\tref{coxeterreconstruction} such that $\Gamma$ acts by isometries
and the metric on $C_s$ is unchanged.  Moreover this action is free.
Indeed, if $x=\gamma x$ then $\gamma\in \N(\K_{i_1 \ldots i_\ell})/
\K_{i_1 \ldots i_\ell}$. But recall that in general for a
homogeneous space $\G/\L$
 the action of $\N(\L)/\L$ on $\G/\L$  on the right is free.

We now have a free action of $\Gamma$ on $M'$ and an induced action
by $\G$ on the quotient $N = M'/\Gamma$. We claim that $(N,\G)$ is
equivariantly diffeomorphic to the action on the given $(M,\G)$.
They both have the same fundamental domain $C_s$, and completion
$C_s'$, by construction. Also recall that the isotropy in $M$ at
$p\in F_{i_1 ,i_2 \ldots ,i_{\ell}}$ is the generic isotropy
$\K_{i_1 ,i_2 \ldots ,i_{\ell}}$ extended by the isotropy of
$\Gamma$ at $p$. But this is also the isotropy of the $\G$ action on
$N$ and we can hence define the diffeomorphism orbit wise.
\end{proof}

\begin{rem}
(a) This also reproves in a concrete way the results of Alexandrino
and T\"oben that polar actions on simply connected manifolds by a
connected group have no exceptional orbits, and that such an action
is Coxeter polar. Indeed, the existence of exceptional orbits means
that $\Gamma = \Pi_{C_s}$ is non trivial, and hence one gets a cover
 (which is connected since $\G$ is connected).

(b) One can interpret this even when $\Rho_s$ is trivial, in which
case the ``Coxeter domain" $C_s' = C_s = \Sigma$, thus covering
exceptional polar actions.
\end{rem}

\bigskip

The proof of \tref{cover} also shows how to take quotients of
Coxeter polar manifolds.

\begin{thm}[Coxeter Quotients]\label{quotient}
Let $(M,\G)=M(C,\G(C))$ be a Coxeter polar manifold with  polar
group $\Pi$. Let $\Gamma\subset \N(\H)/\H$ be a subgroup that
normalizes $\Pi$ and acts on $(C,\G(C))$, i.e., $\Gamma$ acts on $C$
 isometrically preserving
strata and conjugates the corresponding group assignment.
 Then $\Gamma$
acts isometrically and freely on $M$  commuting with the action of
$\G$ and the quotient $(M/\Gamma,\G)$ is polar with polar group
$\Pi\cdot\Gamma$.
\end{thm}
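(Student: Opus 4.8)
The plan is to imitate the proof of \tref{cover}, with $M$ itself now playing the role of the Coxeter polar manifold that was constructed there.

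First I would build the $\Gamma$-action on $M$ orbit by orbit. Since $(M,\G)$ is Coxeter polar, $\pi\circ\sigma$ restricts to an isometry $C\xrightarrow{\ \sim\ }M/\G$, so every $\G$-orbit meets $\sigma(C)$ in exactly one point $\sigma(x)$ with $x\in C$, and $\G_{\sigma(x)}=\K_S$ is the (constant) group that $\G(C)$ assigns to the stratum $S\ni x$. Fix $\gamma\in\Gamma$ and a lift $\tilde\gamma\in\N(\H)\subset\G$. Because $\Gamma$ preserves strata and conjugates the group assignment, $\gamma$ sends $S$ to a stratum $S'\ni\gamma x$ with $\K_{S'}=\tilde\gamma\K_S\tilde\gamma^{-1}$, independently of the chosen lift since $\H\subset\K_S$. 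Define $\Phi_\gamma\colon M\to M$ on the orbit over $x$, identified with $\G/\K_S$ via $g\K_S\leftrightarrow g\sigma(x)$, by
\[
g\K_S\ \longmapsto\ g\tilde\gamma^{-1}\K_{S'},\qquad\text{i.e.}\qquad g\sigma(x)\ \longmapsto\ g\tilde\gamma^{-1}\sigma(\gamma x).
\]
This is well defined: it is independent of the coset representative because $\tilde\gamma\K_S\tilde\gamma^{-1}=\K_{S'}$, and independent of the lift because $\H\subset\K_{S'}$. It commutes with the left $\G$-action, and a short coset computation (using $\widetilde{\gamma_1\gamma_2}=\tilde\gamma_1\tilde\gamma_2$) gives $\Phi_{\gamma_1}\circ\Phi_{\gamma_2}=\Phi_{\gamma_1\gamma_2}$; in particular each $\Phi_\gamma$ is a bijection, and $\Phi$ is an action of $\Gamma$ on $M$ by $\G$-equivariant maps.

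Next I would upgrade this to an isometric action, exactly as in \tref{cover}: one re-runs the construction of \tref{coxeterreconstruction} equivariantly. Because $\Gamma$ acts by isometries on $C$ and, via $\tilde\gamma$, carries the compatibility data attached to a point $p$ (the Coxeter polar slice representation $V_{\K}$ with its $\Pi_{\K}$-invariant metric on $B_\epsilon(0)\subset V_{\K}$ supplied by \tref{metrics}) to the corresponding data at $\gamma p$, the covering $\{C(p,\epsilon)\}$, a subordinate partition of unity, and hence the resulting $\G$-invariant metric on $M$, may all be chosen $\Gamma$-invariant; with such a metric each $\Phi_\gamma$ is a $\G$-equivariant isometry. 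Then I would prove freeness. If $\Phi_\gamma(m)=m$, write $m=g\sigma(x)$; then $\tilde\gamma^{-1}\sigma(\gamma x)=\sigma(x)$, and since distinct points of $C$ index distinct $\G$-orbits this forces $\gamma x=x$ and $\tilde\gamma^{-1}\in\G_{\sigma(x)}=\K_S$. Hence on the orbit over $x$, $\Phi_\gamma$ is right translation by the image of $\gamma$ in $\N(\K_S)/\K_S$. As in \tref{cover}, the right action of $\N(\K_S)/\K_S$ on $\G/\K_S$ is free, so $\gamma$ must already be trivial modulo $\K_S$; feeding this, together with the fact that $\gamma$ fixes $x$, into the Coxeter polar slice representation $(V_{\K_S},\K_S)$ at $\sigma(x)$ — on which $\gamma$ acts compatibly, with $C$-chamber $T_xC$ of strictly smaller depth and with the local polar group $\Pi_x$ normalized by $\gamma$ — one concludes $\gamma=e$ by induction on the depth of $C$; the base case (no singular orbits, $C=\Sigma$) is immediate since there every point is regular with isotropy $\H$, whence $\tilde\gamma\in\H$. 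I expect this inductive step — checking that the induced action on each slice representation genuinely satisfies the hypotheses of the theorem one level down, so that freeness of the lower-depth quotient may be invoked — to be the main technical obstacle.

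Finally I would identify the quotient. Since $\Gamma$ acts freely and isometrically on $M$ commuting with $\G$, the projection $q\colon M\to N:=M/\Gamma$ is a $\G$-equivariant Riemannian covering, so $N$ is a $\G$-manifold and $\sigma':=q\circ\sigma\colon\Sigma\to N$ (passing to its subcover if necessary) is an immersion of a complete manifold whose image meets every $\G$-orbit orthogonally, i.e. a section of $N$. The action of $\Gamma$ on $C$ develops, through the reflections generating $\Pi$, to an action of $\Gamma$ on $\Sigma$ extending that of $\Pi$, so that $\Pi\cdot\Gamma$ is a group (with $\Pi$ normal, since $\Gamma$ normalizes $\Pi$) acting on $\Sigma$ with $\Sigma/(\Pi\cdot\Gamma)=C/\Gamma$. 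Then $N/\G=(M/\G)/\Gamma=C/\Gamma=\Sigma/(\Pi\cdot\Gamma)$, and by \pref{genproperties}(b) applied to $\sigma'$ — whose polar group contains $\Pi$ (the polar group of $\sigma$) and $\Gamma$ (realized by the deck transformations $\Phi_\gamma$ permuting the sheets of $q$ over $\sigma'(\Sigma)$), and is contained in $\N(\H)/\H$ — the polar group of $(N,\G)$ is exactly $\Pi\cdot\Gamma$. Hence $(M/\Gamma,\G)$ is polar with polar group $\Pi\cdot\Gamma$, as claimed.
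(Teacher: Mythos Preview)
Your overall plan is exactly what the paper does: it proves \tref{quotient} by referring back to the proof of \tref{cover}, and your orbit-by-orbit construction of $\Phi_\gamma$ via $g\K_S\mapsto g\tilde\gamma^{-1}\K_{S'}$, the equivariant re-running of \tref{coxeterreconstruction} to obtain a $\Gamma$-invariant metric, and the identification of $(M/\Gamma,\G)$ as polar with polar group $\Pi\cdot\Gamma$ all mirror that argument faithfully.

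The one genuine gap is in your freeness step. The induction on the depth of $C$ does not terminate: if $x$ lies in a stratum of \emph{maximal} depth $d$, the slice representation $(V_{\K_S},\K_S)$ has chamber $T_xC$ whose group graph is precisely the history of $\K_S$, and this again has depth $d$, not strictly smaller. So at the deepest stratum your inductive hypothesis is unavailable and the argument is circular. The correct completion is direct rather than inductive. Once you know $\tilde\gamma\in\K_S$ you have $\tilde\gamma\in\K_S\cap\N(\H)$, so $\gamma=\tilde\gamma\H$ belongs to the polar group $\Pi_x$ of the slice representation of $\K_S$ on $V_{\K_S}$. Since $\gamma$ fixes $x$ and preserves the strata of $C$, its differential preserves the chamber $T_xC$ of that slice representation. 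But the compatibility conditions in the Coxeter polar data force each slice representation to be Coxeter polar, in particular $(\Pi_x)_{T_xC}=\{e\}$. Hence $\gamma=e$ in $\N(\H)/\H$. This is exactly the content hidden behind the paper's terse line ``the action of $\N(\L)/\L$ on $\G/\L$ on the right is free'' in the proof of \tref{cover}; no induction on depth is needed.
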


\begin{rem}This also encodes that if a subgroup of $\Gamma$ preserves a stratum, it
must lie in the normalizer of the assigned  group. The section in
$M/\Gamma$ is of course simply the image under the cover and hence
$\Gamma\cap\Pi$ is the stabilizer group of a chamber in $M/\Gamma$
and the section of $M/\Gamma$ has $| \Pi/\Gamma \cap\Pi|$ chambers.
We also  allow the possibility that $\Gamma\subset\Pi$ is a proper
subgroup.
\end{rem}


\bigskip

\section{Constructions, Applications and Examples}

\bigskip

Although polar actions may seem rather special and rigid, the
construction from data provided in the previous  section is amenable
to various operations leading to a somewhat surprising flexibility
and a wealth of examples. In addition to the trivial process of
taking \emph{products}, these operations include general
\emph{surgery type constructions}, as well as the construction of
\emph{principal bundles}.  When
combined with the
 flexibility in the choice of metrics this can be used to derive topological conclusion by means of geometric arguments.
 \medskip
\begin{center}
\emph{Cutting and Pasting Operations}
\end{center}
\medskip

Consider a polar $\G$ manifold $M$ with an invariant hyper surface
$E$ corresponding to a $\Pi$ invariant separating hyper surface
$\Delta \subset \Sigma$ missing $0$ dimensional $\Pi$ strata.
Accordingly we have decompositions $M = M_1 \cup M_2$, $\Sigma =
\Sigma_1 \cup \Sigma_2$ and $M^* = M^*_1 \cup M^*_2$, where
 $M_i$ are polar $\G$ manifolds with common boundary $E$, and $\Sigma_i$ are $\Pi$ manifolds with common
  boundary $\Delta$. Here, typically $\Pi$ does not act effectively on $\Delta$ and $\Pi/ker$ is the polar
   group for the section $\Delta$ in $E$. Note also that $\Delta^*$ determines $\Delta$ via invariance and is automatically perpendicular to the strata it meets. We will also refer to such a $\Delta^*$ as a separating hyper surface in $M^* = \Sigma^*$.

Now suppose $M'$ is another polar $\G$ manifold which is separated in the same fashion into two polar $\G$
 manifolds $M'_1$ and $M'_2$ with common boundary $E'$. If moreover there is a $\G$ equivariant isometry
 $A$ from a neighborhood of $E \subset M$  to a neighborhood of $E' \subset M'$, then clearly
 $M_1 \cup_{A} M'_2$  is a polar $\G$ manifolds as well, with section
 $\Sigma_1 \cup_{A} \Sigma'_2$.

Notice that as long as a separating hypersurface $\Delta^*$ in $M^*$ is
orthogonal to all the strata it meets, the inverse image in $M$ is a
smooth  separating hypersurface $E$. The condition that a
neighborhood of $E$ is $\G$ isometric to a neighborhood of $E'$ can
be
 achieved via our reconstruction process (including metrics)  as long as there is a $\Pi/ker$ invariant
 isometry between neighborhoods of $\Delta$ and $\Delta'$. To achieve this modulo a change of metrics
 on the sections $\Sigma$ and $\Sigma'$ it suffices to have a $\Pi/ker$ invariant isometry between
 $\Delta$ and $\Delta'$: Indeed, making the geodesic reflections in $\Delta$ and $\Delta'$ isometries
  in tubular neighborhoods makes $\Delta$ and $\Delta'$ totally geodesic. Next one can change the metrics
   to be product near $\Delta$ and $\Delta'$, in both steps keeping the actions by the polar groups isometric
   (also with respect to a partition of unity).

This in particular proves the following

\begin{thm}
Let $M$ and $M'$ be Coxeter polar $\G$ manifolds with orbit spaces
$C$ and $C'$.  Let $\Delta$, respectively ${\Delta'}$ be codimension
1 separating hypersurfaces  in $C$ and $C'$ orthogonal to the strata
they meet.
 Suppose  there is an isometry $A: \Delta \to {\Delta'}$ such that corresponding isotropy data are the same.
 Then $A$ induces a $\G$-equivariant diffeomorphism between the codimension one sub manifolds of $M$ and $M'$
 corresponding to $\Delta$
 and ${\Delta'}$. In particular,
  $M_1 \cup M'_{2}$ has the structure of a polar $\G$ manifold.
\end{thm}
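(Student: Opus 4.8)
The plan is to route everything through the Coxeter reconstruction machinery. Write $\pi\colon M\to M^*=C$ and $\pi'\colon M'\to M'^*=C'$ for the orbit maps, and set $E=\pi^{-1}(\Delta)$, $E'=\pi'^{-1}(\Delta')$. Since $\Delta$ is orthogonal to every stratum it meets, the slice model (as noted in the discussion preceding the theorem) shows that $E$ is a smooth $\G$-invariant separating hypersurface, so $M=M_1\cup_E M_2$ and $M'=M_1'\cup_{E'}M_2'$ with $M_i=\pi^{-1}(C_i)$, $M_i'=\pi'^{-1}(C_i')$ relative to the decompositions $C=C_1\cup_\Delta C_2$, $C'=C_1'\cup_{\Delta'}C_2'$. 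The goal I aim at is a single $\G$-equivariant isometry between an invariant tubular neighborhood of $E$ in $M$ and one of $E'$ in $M'$: its restriction to $E$ gives the first assertion, and using it to glue collars gives the second.

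First I would reduce to identifying \emph{collar data}. An open tubular neighborhood $U$ of $\Delta$ in $C$ is again a Coxeter domain, whose group graph $\G(U)$ is the restriction of $\G(C)$ to the strata meeting $\Delta$; these intersections are totally geodesic, and the associated slice representations, being restrictions of the ambient Coxeter polar slice representations of \lref{linearisotropy}, are again Coxeter polar, so $(U,\G(U))$ inherits the axioms of a Coxeter polar datum. After a metric change supported near $\Delta$ — making the normal exponential of $\Delta$ an isometry onto a metric product $\Delta\times(-\e,\e)$ while keeping the polar group acting by isometries, via the geodesic-reflection-plus-partition-of-unity device recorded just before the theorem — we may take $U\cong\Delta\times(-\e,\e)$, and by the Remark following \tref{coxeterreconstruction} this change does not alter the equivariant diffeomorphism type of $M$. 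Performing the same adjustment for a neighborhood $U'\supset\Delta'$ in $C'$, the isometry $A$ extends to an isometry $\tilde A\colon U\to U'$. Because $A$ carries the stratification of $\Delta$ to that of $\Delta'$ with matching isotropy, $\tilde A$ is an isomorphism of Coxeter polar data $(U,\G(U))\to(U',\G(U'))$; the slice-representation part of the data needs no separate check, since by \lref{linearisotropy} it is determined by the isotropy groups and their histories.

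Now I would apply \tref{coxeterreconstruction}. Since that construction is built tube-by-tube over the chamber, it is local in $C$: applied to the open subdata $(U,\G(U))$ and $(U',\G(U'))$ it recovers the invariant neighborhoods $\pi^{-1}(U)$ and $\pi'^{-1}(U')$ with their metrics and sections, and the datum isomorphism $\tilde A$ is carried by the canonical construction to a $\G$-equivariant isometry $\Phi\colon\pi^{-1}(U)\to\pi'^{-1}(U')$. Restricting $\Phi$ to $E$ yields the required $\G$-equivariant diffeomorphism $\bar A\colon E\to E'$, which is the first assertion. For the second, I would use $\Phi$ on the overlap of the collars to glue $M_1$ to $M_2'$ along $E\cong E'$; the result is manifestly a smooth $\G$-manifold, and in fact equals the reconstruction $M\bigl(C_1\cup_A C_2',\ \G(C)|_{C_1}\cup\G(C')|_{C_2'}\bigr)$ — the glued orbit space is again a Coxeter domain (the orthogonality of $\Delta$ to the strata makes the gluing totally geodesic, preserving the tangent-cone condition), and the glued group graph satisfies the compatibility conditions because these are local along the strata of $C$, hold on each half, and agree along $\Delta$. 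Hence $M_1\cup M_2'$ carries a Coxeter polar $\G$-structure, with section $\Sigma_1\cup\Sigma_2'$ glued along the lift of $\tilde A$ and metric glued from the two pieces; in particular it is a polar $\G$-manifold as claimed.

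The step I expect to demand the most care is the metric surgery near $\Delta$ and $\Delta'$: one must simultaneously arrange that the collar is a metric product, that the polar groups still act by isometries, and that nothing is disturbed away from $\Delta$. The tool for this is exactly the argument sketched in the paragraph before the theorem, together with the invariance of the equivariant diffeomorphism type of $M$ under such changes. Verifying that the restricted and glued group graphs are legitimate Coxeter polar data is then routine, since every axiom is a local condition along the strata.
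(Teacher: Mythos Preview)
Your proposal is correct and follows essentially the same route as the paper: the argument in the paper \emph{is} the discussion immediately preceding the theorem statement, and you have simply unpacked it---the orthogonality of $\Delta$ to the strata gives a smooth invariant hypersurface $E$, the metric is adjusted to a product collar via the geodesic-reflection-plus-partition-of-unity device, and then the Coxeter reconstruction (together with \lref{linearisotropy} to pin down the slice data from the isotropy groups) produces the $\G$-equivariant isometry of tubular neighborhoods used for the gluing. Your write-up is more explicit about why the restricted and glued data satisfy the Coxeter-polar axioms, but the strategy and the key inputs (\tref{coxeterreconstruction}, \tref{metrics}, \lref{linearisotropy}) are identical to the paper's.
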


\begin{rem*}
(a) The separating hyper surfaces need not meet any lower strata.
This way for example one can glue together  two copies of the
complement of a tubular neighborhood of a principal orbit (see also
\cite{AT}). In fact, this is possible for any orbit as long as the
slice representations are equivalent. Another special case is the
connected sum along fixed points if the isotropy representations at
the fixed points are equivalent.

(b) $\G$ equivariant standard surgery can be performed in the
following case. Suppose $\S^k \subset M^n$ is $\G$ invariant sphere
in $M$ with trivial normal bundle. Assume $\G \colon \R^{k+1} \oplus
\R^{n-k} \to \R^{k+1} \oplus \R^{n-k}$ is a reducible polar
representation, such that when restricted to a tubular neighborhood
of $\Sph^k \subset \Sph^n$ is equivalent to the restriction of the
$\G$ action on $M$ to a tubular neighborhood of $\S^k$.
 Then the manifold $N$ obtained from $M$ by replacing $\Sph^k \times \Disc^{n-k}$ with $\Disc^{k+1} \times \Sph^{n-k-1}$
 is a polar $\G$ manifold.
\end{rem*}

Here are some interesting concrete examples applying this type of operation:

\begin{example}
(a) Consider the standard cohomogeneity one action by $\SO(n)$ on
$\Sph^n$ with two fixed points, section a circle and polar group
$\Z_2$. The cohomogeneity two product action by $\G = \SO(n) \times
\SO(m)$ on $\Sph^n \times \Sph^m$ is polar with section a torus and
polar group $\Z_2 \times \Z_2$. By performing a connected sum at
 fixed points of $\G$ it follows
from the above that  $M = \Sph^n \times \Sph^m \# \Sph^n \times
\Sph^m$ admits a polar $\G$ action with section $\Sph^1\times\Sph^1
\#\Sph^1\times\Sph^1$ and orbit space a right angled hexagon. In
particular M admits a polar metric with section of constant negative
curvature. We can of course also take further connected sums and
obtain a section of any given higher genus.

(b) Consider the standard polar action by $\T^n$ on $\CP^n$ (and the
one where the action is reversed via the inverse) with $n+1$ fixed
points and section $\RP^n$.
 Thus $\CP^n \#
{\pm} \CP^n$ admits a polar action and in fact admits
 a non negatively curved invariant metric (using the Cheeger construction \cite{Ch}) with section $\RP^n \# \RP^n$.
 Taking  further connected sums $M = \CP^n \# \dots \# \CP^n$ yields a polar $\T^n$ manifold with section
 $\RP^n \#\ldots \# \RP^n$. If $n=2$, we can equip $M$ with a metric such that the section has
 constant curvature $0$ or $-1$.

 (c) In the first example (and
similarly in the second) we can do a surgery along a principal
orbit on two copies of the action. The section will be the double of $\Sph^1\times\Sph^1$ minus
four discs, one at the center of all four chamber squares of
$\Sph^1\times\Sph^1$. The polar group remains $\Z_2 \times \Z_2$.

(d) Recall that a symmetric space
$M=\G/\H$ has a natural polar action by $\H$ with a fixed point.
Thus $M\# M$ has a polar action by $\H$ as well.
\end{example}

In general using such constructions with the basic sources coming from cohomogeneity one manifolds and \emph{model examples} coming from polar actions on symmetric spaces (as above) leads to a wealth of interesting examples.

\smallskip

Here are some further (model) examples illustrating our data:

\begin{example}
The case where the section is $2$-dimensional is particularly
simple, but already gives rise to many interesting examples. This
case is also easier geometrically since by the uniformization
theorem, and the metric extension theorem, we can assume that
$\Sigma$ has constant curvature and that $\Pi$ acts by isometries.
Thus for a Coxeter polar action we can simply start with a smooth
metric on a domain $C$ of constant curvature such that the boundary
$\partial C$ is  a geodesic polygon.  Recall also that the angle
between two geodesics in $\partial C$ is one of $\pi/\ell,\ $ with $
\ell=2,3,4,6$ and the corresponding local Weyl group is the dihedral
group $D_\ell$. The sign of the curvature is determined by applying
the Gauss Bonnet theorem to $C$, in particular, the section is
typically hyperbolic.

If $\K_i$ are the isotropy groups of the sides, $\L_i$ the isotropy
groups at the vertices, and $\H$ the principle isotropy group, then
the compatibility condition simply says that $\K_i/\H$ are spheres,
and that $\L_i$ defines a cohomogeneity one action on another
sphere, with group diagram given by $\L_i$, the two adjacent sides,
and the principle isotropy group $\H$. Notice also that the
smoothness condition (i.e. extension to a smooth metric in a
neighborhood of $p\in\partial C$ invariant under the local polar
group) is now automatic since the metric has constant curvature. Of
course the angle at the vertex is also determined by the vertex
groups $\L_i$. The marking by the groups also determines the polar
group $\Pi\subset \N(\H)/\H$ since it is generated by the
reflections in the sides. Thus if $C$ is compact and $\Pi$ finite,
the isotropy groups and the order $|\Pi|$ also determine the genus
of the (compact) section, again by the Gauss Bonnet theorem.

 We can of course also change the
topology of $\Sigma$ on the interior, and hence the topology of $M$,
via connected sums with any surface and keep the same isotropy
groups. But it also becomes clear that for example for a domain $C$
with 3 vertices, the compatibility conditions makes it quite
delicate for the group diagram to be consistent since the sides must
be the common isotropy groups of three cohomogeneity one actions on
spheres (but only after it has been made effective). See e.g.
\cite{GWZ}  for a list of cohomogeneity one actions on spheres,
together with their Weyl group. Notice that in the case when the
slice representation is reducible, i.e. the angle between the sides
is $\pi/2$ and $\Pi=D_2$, the description is contained in what is
called a generalized sum representation in \cite{GWZ}.

\smallskip

A typical example of a cohomogeneity two action on a rank one
symmetric space is given in Figure 1. The section is an $\RP^2$ and
the polar group is $\Pi= \Rho = C_3/\Z_2$ generated by the 3
reflections in the sides with the further relation coming from the
fact that the $C_3$ action on $\Sph^2$ contains the antipodal map.
The Coxeter group associated to $C$ is $\M=C_3$ and the universal
section $\Sigma'=\Sph^2$.

\begin{picture}(100,200)  
 \small\put(220,140){\line(2,-3){80}}
  \put(220,140){\line(-2,-3){80}}
 \put(140,20){\line(1,0){160}}
 \put(210,148){$\Sp(3)$}
 \put(120,80){$\Sp(1)\Sp(2)$}
  \put(280,80){$\Sp(2)\Sp(1)$}
    \put(210,60){$\Sp(1)^3$}
     \put(85,10){$\S(\U(2)\U(4))$}
      \put(310,10){$\Sp(2)\U(2)$}
       \put(200,2){$\Sp(1)^3\S^1$}
        \put(155,28){$\pi/4$}
         \put(270,28){$\pi/2$}
          \put(212,115){$\pi/3$}
 \put(100,-23){\it
\small Figure 1. Cohomogeneity two polar action of $\SU(6)$ on
$\CP^{14}$.}
\end{picture}

\vspace{50pt}

An example of an action on a symmetric space of rank$>1$, where the
sections have to be flat, is given in Figure 2. The vertices are
fixed points and the slice representations are the well known
irreducible polar action of $\SO(3)$ on $\R^5$. $\O(2),\, \O'(2),\,
\O''(2)$ are the 3 different block embeddings of $\O(2)$ in
$\SO(3)$. The polar group is the polar group at the fixed point and
hence $\Pi=D_3$ (which in this case is all of $\N(\H)/\H$). Notice
that here there are only 2 orbits types, besides the principal one,
each of which has 3 components, and that the isotropy along the 3
sides must all be different in order for the group diagram to be
consistent. We could add an action of $\Pi_C=\Z_3\subset\Pi$ here
that rotates the vertices, which defines a polar action on a
subcover of $\SU(3)/\SO(3)$. Notice though that $\Z_2\subset\Pi$ is
not allowed since it is not normal in $\Pi$.

\begin{small}
\begin{picture}(100,200)  
 \small\put(220,140){\line(2,-3){80}}
  \put(220,140){\line(-2,-3){80}}
 \put(140,20){\line(1,0){160}}
 \put(210,148){$\SO(3)$}
 \put(140,80){$\O(2)$}
  \put(280,80){$\O'(2)$}
    \put(210,60){$\Z_2^2$}
     \put(105,10){$\SO(3)$}
      \put(310,10){$\SO(3)$}
       \put(200,2){$\O''(2)$}
        \put(155,28){$\pi/3$}
         \put(270,28){$\pi/3$}
          \put(212,115){$\pi/3$}
 \put(80,-23){\it
\small Figure 2. Cohomogeneity two polar action of $\SO(3)$ on
$\SU(3)/\SO(3)$.}
\end{picture}
\end{small}

\vspace{50pt}

\smallskip

In the third example, where the polar manifold is not a symmetric
space anymore, we replace in the previous example two of the vertex
groups by $\SO(4)$ and let $\G=\SO(4)$. The slice representation of
$\SO(4) $ at the 2 fixed points is given by the 8 dimensional
irreducible polar representation coming from the symmetric space
$\G_2/\SO(4)$. Care needs to be taken for the 3 embeddings of
$\O(2)$ in $\SO(4)$. To describe this, it is easier to let
$\G=\S^3\times\S^3$ act ineffectively. Then for the cohomogeneity
one action by $\S^3\times\S^3$ on $\Sph^7$ the isotropy groups are
$\H=\Delta\Q$ and $\K_-=e^{(i\theta,3i\theta)}\cdot \H, \;
\K_+=e^{(j\theta,j\theta)}\cdot \H$ up to conjugacy by an element in
$\S^3\times\S^3$ (see e.g. \cite{GWZ}). For the cohomogeneity one
action by $\S^3$ (effectively $\SO(3)$) on $\Sph^4$ they are
$\K_-=e^{i\theta}\cdot \H, \; \K_+=e^{j\theta}\cdot \H,\; \H=\Q$ up
to conjugacy, and the embedding of the third vertex group
$\SO(3)\subset\SO(4)$ corresponds to the embedding $\Delta
\S^3\subset \S^3\times\S^3$. Thus a consistent choice for the
isotropy of the sides is e.g. $ e^{(i\theta,3i\theta)}\cdot \H, \;
e^{(j\theta,j\theta)}\cdot \H$ and $e^{(k\theta,k\theta)}\cdot \H$.
The polar group $\Pi$ is now $D_6$, and since $\Sigma'=\Sigma$ one
easily sees that  the genus of the section is 2. Notice that the
section here must be orientable since $C$ is and $\Pi$ acts simply
transitively on chambers.

\smallskip

Notice though that in Figure 3 it is not possible to have one or
three fixed points since there is no compatible choice of side
groups. On the other hand, we can easily generalize this example to
a domain $C$ with arbitrarily many sides, or by adding handles in
the interior. In order for the group diagram to be consistent, the
fixed points by $\SO(4)$ must come in adjacent pairs, and the
remaining vertices have isotropy $\SO(3)$. This gives rise to a
large class of 8 dimensional polar manifolds.

\begin{picture}(100,200)  
 \small\put(220,140){\line(2,-3){80}}
  \put(220,140){\line(-2,-3){80}}
 \put(140,20){\line(1,0){160}}
 \put(210,148){$\SO(4)$}
 \put(140,80){$\O(2)$}
  \put(280,80){$\O'(2)$}
    \put(210,60){$\Z_2^2$}
     \put(105,10){$\SO(4)$}
      \put(310,10){$\SO(3)$}
       \put(200,2){$\O''(2)$}
        \put(155,28){$\pi/6$}
         \put(270,28){$\pi/3$}
          \put(212,115){$\pi/6$}
 \put(80,-23){\it
\small Figure 3. Cohomogeneity two polar action of $\SO(4)$ on
$M^8$.}
\end{picture}

\end{example}

\bigskip

\begin{example}
We now claim that every $\T^{n}$ action  on a
 compact simply connected
$(n+2)$-dimensional manifold $M$ is polar, if it  has singular
orbits. Indeed, by the theory of Orlik-Raymond \cite{OR}, every such
torus action is classified by the following data: The quotient
$M/\T^{n}$ is a 2-disk with $k$ edges and $k$ vertices. The
principal isotropy is trivial, the vertices have isotropy $\T^2$,
and the isotropy groups along the sides are $\S^1$ with some slopes,
the only condition being that $\T^2$ is the product of the two
circles assigned to any two adjacent sides. Two assignments give the
same action, if and only if they can be carried into each other by
an automorphism of $\T^n$. The slice representations at the vertices
are effective representations of $\T^2$ on $\R^{n+2}$ and hence the
angles at the vertices are $\pi/2$. But every such representation is
polar. Hence all compatibility conditions are satisfied, and we can
apply \tref{coxeterreconstruction} to obtain a polar action on
$M^{n+2}$.

In \cite{OR} it was shown that if $n=4$, the manifolds are
diffeomorphic to connected sums of $k$ copies of
$\Sph^2\times\Sph^2$ or $\pm\CP^2$, but on each such manifold there
are infinitely many distinct $\T^2$ actions, the only exceptions
being $\CP^2$ and $\CP^2\#\CP^2$ where the action is unique.

 If $n=5$     such
5 manifolds are diffeomorphic to $\Sph^5$ or connected sums of
arbitrarily many copies of $\Sph^3\times\Sph^2$, possibly with the
non-trivial $\Sph^3$ bundle over $\Sph^2$ as well (see \cite{Oh1}).
Again, each space except $\Sph^5$, admits infinitely many $\T^3$
actions.

In dimension 6, it was shown in \cite{Oh2} that such a manifold is
diffeomorphic to connected sums of arbitrarily many copies of
$\Sph^4\times\Sph^2$,\, $\Sph^3\times\Sph^3$, and possibly with the
non-trivial $\Sph^4$ bundle over $\Sph^2$ as well, again with
infinitely many actions in most cases.
\end{example}

\bigskip

\begin{center}
\emph{Bundle Constructions}
\end{center}

\bigskip

Note that the following \emph{quotient operation} holds within the
class of polar actions: If $(M,\G)$ is polar with section $\sigma$,
and $\L$ a normal subgroup of $\G$ acting freely on $M$, then
$(M/\L, \G/\L)$ is polar with section $/\L \circ \sigma$.

In the
language of
 principal bundles, the polar action of $\G/\L$ on $M/\L$ admits
 a commuting lift to the total space $M$.
 We
are interested in such {\it commuting lifts}. See \cite{GZ2} for a
discussion of  commuting lifts in the cohomogeneity one situation.
Recall also that one  obtains a larger collection of commuting lifts
if one allows the action of $\G$ on $M$ to be almost effective, i.e
$\G$ and the principle isotropy group $\H$ have a finite central
subgroup in common.

 Such bundles can be constructed via data as
follows.

\begin{thm}\label{lifts}
Let $D = (C, {\G}(C))$ be the (almost effective) data for a Coxeter
polar $\G$ manifold $M(D)$.
 Let  $\L$ be a Lie group and  choose for each $\K \in {\G}(C)$  homomorphisms
  $\phi_{\K}\colon\K\to\L$ such that ${\phi_{\K}}_{|\U}=\phi_{\U}$ for
  any $\U\in{\G}(C)$  connected in the graph $\G(C)$ to $\K$ via the inclusion $\U\subset\K$.
   Then for each $\K \in
  {\G}(C)$ we obtain embeddings of $\K$ into $\G^*=\L\times{\G}$
  via $k\to (\phi_{\K}(k),k)$. This defines a group graph
  ${\G^*}(C)$, and hence the data $D^* = (C, {\G^*}(C))$   for a Coxeter polar
$\G^*$
    manifold $ M(D^*)$. Moreover, $ M(D^*)$ is a principal $\L$ bundle over $M(D)$
    and the polar metrics on $ M(D^*)$ and $M(D)$  can be chosen such that
      the projection  is a ${\G}$ equivariant Riemannian submersion.
\end{thm}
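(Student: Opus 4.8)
The plan is to run the reconstruction of \tref{coxeterreconstruction} twice in parallel, for $D$ and for the new data $D^* = (C,\G^*(C))$, making compatible choices at each step, so that the projection $\pi_2\colon\G^* = \L\times\G \to \G$ induces the bundle and the submersion. The real content is only the observation (via \lref{linearisotropy}) that the slice data of $D$ can be transported unchanged to $D^*$; everything else is parallel bookkeeping.

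\emph{Step 1: $D^*$ is legitimate Coxeter polar data.} For each vertex group $\K\in\G(C)$ the map $\iota_\K\colon\K\to\G^*$, $k\mapsto(\phi_\K(k),k)$, is an injective homomorphism onto its image $\K^*:=\iota_\K(\K)$, and $\pi_2$ restricts to the inverse isomorphism $\K^*\to\K$. The hypothesis ${\phi_\K}_{|\U}=\phi_\U$ says precisely that $\iota_\U={\iota_\K}_{|\U}$ whenever $\U\subset\K$ is an edge of $\G(C)$; hence $\U^*\subset\K^*$, so $\K\mapsto\K^*$ turns $\G(C)$ into a partially ordered group graph $\G^*(C)$ marking the same strata of $C$ and satisfying \eqref{vertices}, \eqref{edges} (with principal isotropy $\H^*=\iota_\H(\H)$). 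The compatibility (slice) data transport verbatim: for the vertex with group $\K^*$ take the \emph{same} vector space $V_\K$ with $\K^*$ acting through $\K^*\xrightarrow{\pi_2}\K\to\O(V_\K)$. This action has the same orbits, section $\Sigma_\K$ and polar group $\Pi_\K$ as the $\K$-action, hence is Coxeter polar, and its group graph is the history of $\K$, which under $\U\mapsto\U^*$ is exactly the history of $\K^*$; the $\Pi_\K$-invariant metric $g_{V_\K}$ on $B_\e(0)$ is unchanged and still restricts to $T_qC$ on a chamber. By \lref{linearisotropy} these representations are forced, so $D^*$ is a set of smooth Coxeter polar data and \tref{coxeterreconstruction} produces a Coxeter polar $\G^*$ manifold $M(D^*)$ with orbit space $C$.

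\emph{Step 2: the bundle.} Inside $\G^*=\L\times\G$ the normal subgroup $\L\times\{e\}$ commutes with $\G=\{e\}\times\G$, so it induces an action of $\L$ on $M(D^*)$ commuting with $\G$. The isotropy groups of the $\G^*$-action are the conjugates of the $\K^*$, and $g^*\K^*(g^*)^{-1}\cap(\L\times\{e\})=\{e\}$ for all $g^*\in\G^*$, since $\pi_2$ is injective on $\K^*$ and kills $\L\times\{e\}$; hence the $\L$-action is free. The projection $\pi_2$ induces, orbit by orbit, the maps $\G^*/\K^*\to\G/\K$, $(g_1,g_2)\K^*\mapsto g_2\K$, which respect the gluings used to assemble $M(D^*)$ and $M(D)$ (both are built from the same $C$ with the \emph{same} index bookkeeping), hence patch to a smooth $\G$-equivariant, $\L$-invariant surjection $P:=M(D^*)\to M(D)$. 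Over a tube $\G\times_\K B^*_\e$ of $M(D)$ the preimage is $\G^*\times_{\K^*}B^*_\e$ and the map is $[(g_1,g_2),v]\mapsto[g_2,v]$; since $\pi_2\colon\G^*\to\G$ is a (trivial) principal $\L$-bundle restricting to an isomorphism on $\K^*$, this is a locally trivial principal $\L$-bundle over the tube, and the local trivializations glue. Thus $P\to M(D)$ is a principal $\L$-bundle with $M(D)=P/\L$ $\G$-equivariantly.

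\emph{Step 3: the metrics, and the main obstacle.} In the metric part of \tref{coxeterreconstruction} choose on $\G^*$ a biinvariant metric of product type $g_\L\times g_\G$ and on $\G$ the metric $g_\G$; for the slices $B^*_\e\subset V_\K$ use the \emph{same} $\K(=\K^*)$-invariant metrics (extended via \tref{metrics}) in both constructions, and pull back to $P$ the partition of unity on $C$ used for $M(D)$. For each tube the square
\[
\begin{CD}
\G^*\times B^*_\e @>{\pi_2\times\id}>> \G\times B^*_\e\\
@VVV @VVV\\
\G^*\times_{\K^*}B^*_\e @>>> \G\times_\K B^*_\e
\end{CD}
\]
commutes, the verticals are the Riemannian submersions defining the local metrics and the top map is a Riemannian submersion with totally geodesic $\L$-fibers; a routine comparison of horizontal spaces (the $\K^*$-orbits map submersively onto the $\K$-orbits) shows the bottom map is a Riemannian submersion. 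Since everything is assembled with the same partition of unity, $P\to M(D)$ is a $\G$-equivariant Riemannian submersion for the resulting polar metrics and $\L$ acts by isometries. The step demanding the most care is making the two runs of \tref{coxeterreconstruction} genuinely compatible — that the orbit-wise maps $\G^*/\K^*\to\G/\K$ respect the gluings and the partition of unity — and, if $\L$ is noncompact, that the $\L$-action is proper so that $P\to P/\L$ is honestly a principal bundle; this is automatic when $\L$ is compact, which is the case in all intended examples.
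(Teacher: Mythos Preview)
Your argument is correct, but the route differs from the paper's. You run the two reconstructions in parallel, choose compatible slice metrics and a product biinvariant metric on $\G^*=\L\times\G$, and then assemble an explicit map $M(D^*)\to M(D)$ tube by tube, checking via the commutative square that it is a Riemannian submersion. The paper instead builds only $M(D^*)$, observes (as you do) that $\L\times\{e\}$ acts freely, takes the quotient $B=M(D^*)/\L$ with its submersion metric, shows directly that the image of the section makes $(B,\G)$ polar with orbit space $C$ and the same isotropy marking, and then invokes the \emph{uniqueness} half of \tref{coxeterreconstruction} to identify $B$ with $M(D)$. Their approach is shorter and sidesteps the gluing-compatibility and square-of-submersions checks by packaging them into the reconstruction theorem; yours is more hands-on and makes the metric compatibility completely explicit, which is a genuine advantage if one later wants finer control of the geometry.

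One loose end you flag is not actually an issue: the $\G^*$ action on $M(D^*)$ is polar and hence proper by \dref{proper}, and $\L\times\{e\}$ is a closed subgroup of $\G^*$, so the restricted $\L$ action is automatically proper even when $\L$ is noncompact. Thus $M(D^*)\to M(D^*)/\L$ is an honest principal bundle in all cases, and your Step~2 needs no compactness hypothesis.
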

\begin{proof}
Notice that in the group graph $\G^*(C)$ the vertex groups $\K$, as
well as their slice representations $V_\K$, are unchanged. Thus the
necessary compatibility conditions are satisfied and hence we obtain
a polar manifold $M(D^*)$. The subgroup
$L\simeq\L\times\{e\}\subset\L\times\G$ acts along the orbits
$(\L\times \G)/K$ from the left and the action is free (and
isometric) since the embedding of $\K$ in $\L\times\G$ is injective
in the second coordinate by construction (since $\L$ is normal, the
isotropy groups are simply the intersection of $\L\times\{e\}$ with
$\K$). Thus we have a principle bundle $\pi\colon M(D^*)\to
M(D^*)/\L=B$ and on the base $B$ we obtain an induced metric and
isometric action by $\G$, such that $\pi$ is a $\G$-equivariant
Riemannian submersion. The section in $M(D^*)$ is horizontal w.r.t.
 $\pi$ and hence it covers an immersed submanifold in $B$ orthogonal
to the $\G$ orbits. Thus $(B,\G)$ is polar, and since $
M(D^*)/(\L\times\G)=B/\G$ the fundamental domain is isometric to the
given metric on $C$. One easily checks (orbitwise) that the isotropy
groups of the $\G$ action on $ B$ are the same as that in $M(D)$,
i.e.  the marking of $C$ for $\G$ action on $B$ is the same as that
for $M(D)$ and hence
\tref{coxeterreconstruction} implies that $B$ is equivariantly
diffeomorphic to $M(D)$.
\end{proof}

\begin{rem*}(a) Unlike in the cohomogeneity one case, if $P\to B$ is an $\L$
principle bundle and $\G$ acts polar on $B$ and admits a lift to $P$
that commutes with $\L$, the action of $\L\times\G$ on $P$ may not
be polar.

(c) Such a principle bundle construction was carried out in the
topological category in \cite{HH}.
\end{rem*}

\begin{example}

In \cite{GM} Garcia and Kerin showed that every  $\T^2$ action on
$\Sph^2\times \Sph^2$ or $\CP^2\#\pm\CP^2$, and every $\T^3$ action
on $\Sph^3\times \Sph^2$ or the non-trivial $\Sph^3$ bundle over
$\Sph^2$,  admits an invariant metric with non-negative sectional
curvature.  The proof shows that the
manifolds can be described by taking a quotient of
$\Sph^3\times\Sph^3$ by a sub torus $\S\subset \T^4$ and the action
is the induced action by $\T^4/\S$. Since the action by $\T^4$ is
polar in the product metric, it follows that the $\T^2$ and $\T^3$
actions on these 4 and 5 manifolds admit polar metrics with
non-negative sectional curvature. Note that this is an example of
our ``quotient'' principle bundle construction. In Figure 4 we
depict the group diagram  on the $4$ dimensional manifolds (see
\cite{GM}).
\end{example}

\begin{picture}(100,200)  
 \small
 \put(140,140){\line(1,0){120}}
\put(140,140){\line(0,-1){120}} \put(260,140){\line(0,-1){120}}
 \put(140,20){\line(1,0){120}}
 \put(130,145){$\T^2$}
 \put(125,10){$\T^2$}
 \put(265,145){$\T^2$}
 \put(268,10){$\T^2$}
  \put(110,70){$\S^1_{(0,1)}$}
   \put(270,70){$\S^1_{(k,1)}$}
    \put(190,150){$\S^1_{(1,0)}$}
     \put(190,5){$\S^1_{(1,0)}$}
     \put(148,125){$\frac{\pi}{2}$}
     \put(245,125){$\frac{\pi}{2}$}
     \put(245,30){$\frac{\pi}{2}$}
      \put(148,30){$\frac{\pi}{2}$}
 \put(60,-23){\it
\small Figure 4. $\T^2$ polar action on  $\Sph^2\times\Sph^2$ ($k$
even) and $\CP^2\#-\CP^2$ ($k$ odd).}
\end{picture}

\vspace{30pt}

\bigskip

\bigskip

\begin{center}
\emph{Geometric and Topological Ellipticity and hyperbolicity}
\end{center}

\bigskip

Polar manifolds with a flat section are referred to as
\emph{hyperpolar} in the literature. More generally we will examine
polar manifolds with constant curvature sections, which we will
refer to as \emph{polar space forms} or more
 precisely a \emph{spherical} respectively \emph{hyperbolic} polar manifold if the section has positive
  respectively negative constant curvature.

  Recall that $M$ is called (topologically) elliptic if the Betti numbers of the loop
space grow at most polynomially for any field of coefficients. It is
well known that simply connected homogeneous spaces and biquotients
have this property. If this holds for rational coefficients, the
space is called rationally elliptic, which is also equivalent to the
condition that all but finitely many homotopy groups are finite. If
the manifold is not rationally elliptic, it is called rationally
hyperbolic. Rationally elliptic manifolds are severally restricted,
see \cite{FHT}, and \cite{GH1} in the context of non-negative
curvature.

\begin{thm}\label{elliptic}
Let $M$ be a simply connected closed polar manifold which is hyper
polar or spherical polar. Then $M$ is topologically elliptic, in
particular rationally elliptic.
\end{thm}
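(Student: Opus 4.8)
The plan is to adapt the cohomogeneity-one argument of \cite{GH2}: use equivariant Morse theory on the (now polytopal) orbit space to present $M$ as a homotopy colimit of homogeneous spaces glued along linear sphere bundles, and then propagate, inductively over the strata, a polynomial bound on the Betti numbers of the loop space, for every field of coefficients.

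First I would normalize the data. Since $M$ is simply connected the action is Coxeter polar (\cite{AT,A}; cf.\ \tref{cover}); hence $M/\G=C$ is the closed chamber, which for a Coxeter polar action is a \emph{convex} compact geodesic polytope --- a fundamental domain of the reflection group $\Pi=\Rho$ acting by isometries of the space form $\R^m$ (when $M$ is hyperpolar) or $\Sph^m$ (when $M$ is spherical polar), $m=\dim\Sigma$. (If there are no singular orbits, $M$ is exceptional polar by \tref{exceptional}, and simple connectivity forces $M$ to be the product of a simply connected homogeneous space with a round sphere, which is manifestly elliptic; so assume singular orbits are present.) The strata of $C$ carry the isotropy groups of the group graph $\G(C)$, and by \pref{genproperties}(d) and \lref{linearisotropy} every slice representation along $C$ is Coxeter polar, orbit equivalent to an $s$-representation.

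Next I would read off a handle decomposition of $M$ from $C$. Choose a generic linear function $h$ on $C$ --- an affine function of $\R^m$ in the hyperpolar case, the restriction to $C\subset\Sph^m\subset\R^{m+1}$ of a generic linear functional in the spherical case --- so that $h$ has no critical point in the interior of any positive-dimensional stratum of $C$, its only critical points being the vertices. By convexity of $C$ and the slice theorem, the sublevel sets $M_{\le c}:=\pi^{-1}\{h\le c\}$ then change topology, as $c$ passes the value $h(v)$ of a vertex $v$, precisely by the attachment of the negative disk bundle $\G\times_{\L_v}D(\nu_v)$ --- with $\nu_v\subset V_{\L_v}$ an $\L_v$-subrepresentation of the slice representation at $v$ --- along its boundary sphere bundle $\G\times_{\L_v}S(\nu_v)$, a linear sphere bundle over the homogeneous space $\G/\L_v$. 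Iterating over the vertices of $C$ (in order of $h$-value) exhibits $M$, up to $\G$-equivariant homotopy, as an iterated double (more precisely, multiple) mapping cylinder of linear sphere bundles over homogeneous spaces, built up out of the principal orbit $\G/\H$ by successively adjoining the tubes over the singular orbits; when $\dim C=1$ this is exactly the double mapping cylinder $\G/\Km\leftarrow\G/\H\to\G/\Kp$ of \cite{GH2}.

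It remains to control the loop space, and this is where the main difficulty lies. Homogeneous spaces of compact Lie groups, and linear sphere bundles over them, are $\F$-elliptic for every field $\F$ (the former is classical; the latter follows since a fibration with $\F$-elliptic fibre $\Sph^k$ and $\F$-elliptic base has $\F$-elliptic total space). One must show that the class of spaces whose mod-$\F$ loop-space Betti numbers grow polynomially is closed under the gluings above --- that is, under forming the (multiple) mapping cylinder of $A\leftarrow S\to B$ in which $S\to B$ is a linear sphere bundle over an $\F$-elliptic homogeneous space $B$ and $S\hookrightarrow A$ is the inclusion of the attaching locus into the previously built $\F$-elliptic piece $A$. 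This is a field-independent, iterable strengthening of the estimate underlying \cite{GH2} (where $A$ too is a sphere bundle over a homogeneous space): whereas that result handles a \emph{single} double mapping cylinder of two sphere bundles over one homogeneous space, here one faces a whole chain of such attachments, governed by the combinatorics of the polytope $C$, with the inner space at each stage no longer homogeneous but only known to be $\F$-elliptic, and one must check --- uniformly in $\F$ --- that the polynomial bound survives every stage. I would carry this out on the Adams--Hilton / cobar chain model of $M$, on which each gluing becomes an explicit operation amenable to a growth (Euler-characteristic-type) comparison, and then conclude by induction on the depth of $\G(C)$ that $\sum_{i\le n}\dim_\F H_i(\Omega M;\F)$ grows polynomially for every $\F$; hence $M$ is topologically elliptic, and rationally elliptic on taking $\F=\mathbb{Q}$. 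The remaining points --- verifying, via general position of $h$, that the strata can be attached in an order compatible with the depth filtration; that the bundles $\nu_v$ are genuinely iterated linear sphere bundles (this is where the $s$-representation structure of \lref{linearisotropy} enters); and, in the spherical case, that $h$ can be chosen with no spurious critical points along $\partial C$ --- I expect to be routine.
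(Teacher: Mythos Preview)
Your approach is genuinely different from the paper's, and the central inductive step has a real gap. You propose to lift a linear height function on the convex chamber $C$ to a $\G$-invariant function on $M$, extract an equivariant handle decomposition, and then argue inductively that $\F$-ellipticity survives each attachment of a tube $\G\times_{\L_v}D(\nu_v)$ along its boundary sphere bundle. But that closure property, as you state it, is false, and nothing in your induction uses the hypothesis that the section has curvature $\ge 0$. The paper's own example $\Sph^n\times\Sph^n\,\#\,\Sph^n\times\Sph^n$ is Coxeter polar with chamber a \emph{convex} right-angled hexagon in $\H^2$; a generic height (or Busemann) function on this hexagon has its six critical points exactly at the vertices and yields precisely the iterated-attachment picture you describe, yet the manifold is rationally hyperbolic. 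Convexity of $C$ and the existence of a vertex-only Morse function hold equally in $\H^m$, so your Adams--Hilton induction, which is phrased purely in terms of one gluing at a time, would prove too much. There is also a technical problem upstream: by the Chevalley restriction theorem the $\G$-invariant lift $\tilde h$ is smooth on $M$ only if the $\Pi$-invariant extension of $h$ to $\Sigma$ is smooth, and a function linear on a Weyl chamber extends across a wall like $|x|$; correspondingly the ``descending set'' at a vertex is the $\L_v$-saturation of a face of the tangent cone, not in general the unit disk of an $\L_v$-subrepresentation $\nu_v$.

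The paper's argument runs Morse theory on the \emph{path space} instead, and this is where the curvature hypothesis does the work. For regular $p,q\in C$ the critical points of the energy on paths from $p$ to the orbit $\G q$ are geodesics; since they meet $\G q$ orthogonally they lie in the section and join $p$ to points of $\Pi\cdot q$, with focal points occurring at mirror crossings (and, in the spherical case, at conjugate points). Because the universal cover of $\Sigma$ is $\R^m$ or $\Sph^m$, the lifted reflection group has polynomial orbit growth and a geodesic of length $L$ meets only $O(L)$ mirrors, so the number of critical points of index $\le N$ grows polynomially in $N$; in $\H^m$ the orbit growth is exponential and this fails. One gets polynomial Betti-number growth for the homotopy fibre $F$ of $\G/\H\hookrightarrow M$, and then the fibration $\Omega(\G/\H)\to\Omega M\to F$ transfers this to $\Omega M$ --- directly when $\pi_1(\G/\H)$ is finite, and after replacing $\G$ by a large $\SU(N)\supset\G$ and $M$ by the associated bundle $\SU(N)\times_{\G}M$ when it is infinite.
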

\begin{proof}
By \tref{cover}  $(M,\G)$ is Coxeter polar, with a section $\Sigma$
of curvature $0$ or $1$. If we fix a Weyl chamber $C\subset\Sigma$,
the section $\Sigma$ is then ``tiled" by $\Pi$ translates of $C$.

 Let $p, q \in C
\subset \Sigma$ be regular points. The geodesics starting at $p$ and
ending perpendicularly
 at the orbit $\G q = \G/\H$ are exactly the critical points for the
energy function on the path space $F$ of paths starting at $p$ and
ending at $\G q$. Since the geodesic is perpendicular to the regular
orbit $\G q$, it is perpendicular to all regular orbits, in
particular to $\G p$ as well. Hence the geodesic lies in the  given
section $\Sigma$, starting at $p$ and ending at the $\Pi $ orbit of
 $q$.
  As long as $p$ is not a focal point for $\G q$,
 they are all non-degenerate and their indexes are computed
  by the Morse index theorem.  For a generic choice of $p$ and $q$ moreover,
  these geodesics  only cross the walls of the tiles  at interior points of the
  wall. Such points (together with conjugates points at multiples of $\pi$ when the section has curvature 1) are also precisely the focal points of $\G q$ along geodesic.
   At such a point the isotropy acts transitively on the normal
  sphere to the orbit type and hence the index increases by one less
  than the codimension of the orbit type.
 (When projected to the orbit space, these geodesics can be thought
    of as geodesic ``billiards").

   Since there are only finitely many walls in $C$, and since the universal cover of $\Sigma$ is $\R^n$ or $\Sph^n$
   with its canonical metric on which the lift of the polar group acts isometrically,  it
    follows that the growth of the Betti numbers ( for any field of coefficients)
   of the path space $F$ is at most polynomial.

    This path space in turn is the homotopy fiber of
the inclusion map $\G q \to M$. This suffices to complete the proof
 if the principal
    orbit $\G q = \G/\H$ is simply connected.
    To see this, consider the fibration $F \to \G/\H \to M$ and the induced sequence of iterated loop spaces

$$ \Omega F \to \Omega (\G/\H) \to \Omega M \to F
\to \G/\H \to M$$ where also $ \Omega (\G/\H) \to \Omega M \to F$ is
a fibration (cf. e.g. \cite{Ha}). Since $\pi_1(G/H)=0$, we can write
$G/H=G'/H'$ with $G'$ compact and simply connected and $H'$
connected. Using the energy function for the biinvariant metric on
$G'$, we see that $\Omega G'$ (which is now connected) is
topologically elliptic and from the spectral sequence of the
fibration $\Omega\G'\to \Omega(\G'/\H')\to \H'$ it follows that the
same is true for $\Omega(\G/\H)$. The fibration $ \Omega (\G/\H) \to
\Omega M \to F$ now shows that $\Omega M$  is topologically elliptic
as well.

Basically the same reasoning can be applied if $\G/\H$ has finite fundamental group,
say of order $\ell$: In this case one replaces $\G/H$ by its universal $\ell$ fold cover $\G'/\H'$
and the inclusion map $\G/\H \to M$ by the composed map  $\G'/\H' \to \G/\H \to M$. For each critical
point in $F$ one gets $\ell$ corresponding critical points in the homotopy fiber $F'$ of $\G'/\H' \to \G/\H \to M$
each with the same index. In particular our argument above also shows that the Betti numbers of $F'$ grow at most
polynomially, and we are done.

If $\G/\H$ has infinite fundamental group we replace $\G$ by $\SU(n)$ for $n$ large enough so that
$\G \subset \SU(n)$, and $M$ by $M' := \SU(n) \times_{\G} M$ the total space of the bundle with fiber
 $M$ associated with the principal $\G$ bundle $\SU(n) \to \SU(n)/\G$. Since $\SU(n)$ and $M$ are simply
 connected so is  $M'$. From the fibration $\G\to\SU(n)\times M\to M'$ it is clear that $M$
 is topologically elliptic if $M'$ is. Clearly $\SU(n)$ acts isometrically on $M'$ when $M'$ is
 equipped with the quotient metric induced from the product metric on $\SU(n) \times M$ where the metric
 on $\SU(n)$ is biinvariant. The orbit space $M'/\SU(n)$ is isometric to $M/\G$, and the principal
  $\SU(n)$ orbits are $\SU(n)/\H$, hence have finite fundamental group. Moreover the $\SU(n)$ action on $M'$
  is  polar with section $\Sigma$ in one of the fibres of $M\to M'\to \SU(n)/\G$.
  The proof provided
  above then shows that indeed $M'$ is topologically elliptic.
 \end{proof}

 \begin{rem*}  In dimension at most $5$, a simply connected compact manifold
 is topologically
 elliptic if and only if it is diffeomorphic to one of the known
 manifolds with non-negative curvature, i.e. $\Sph^n, n\le 5,\ \; \pm\CP^2,\; \Sph^2\times\Sph^2$
 \;
 $\CP^2\#\pm\CP^2$, \; $\Sph^3\times\Sph^2, \; \SU(3)/\SO(3)$ or the non-trivial
$\Sph^3$ bundle over $\Sph^2$. (see \cite{PP} for $n=5$). Thus in
these dimensions they are the only  manifolds that can admit a polar
action with  two dimensional section of non-negative curvature
(since in this case we can assume it also admits a polar metric with
section of constant curvature $0$ or $1$). It turns out that these
are also the only manifolds that admit cohomogeneity one actions
\cite{Ho}.
 \end{rem*}

In view of the above theorem it is natural to

\begin{conjecture}
A simply connected hyperbolic polar manifold is rationally
hyperbolic.
\end{conjecture}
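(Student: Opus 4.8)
\noindent\emph{Proposal.} The plan is to imitate the Morse-theoretic proof of \tref{elliptic}, replacing the polynomial count of geodesics used there by an exponential one. As in \tref{elliptic} we take $M$ closed — some compactness is needed, since $\QH^{n}$ with the trivial action is hyperbolic polar yet, being contractible, rationally elliptic. By \tref{cover} (and replacing $\G$ by its identity component) $(M,\G)$ is Coxeter polar, so $\Pi=\Rho$ and a chamber $C$ is a fundamental domain for $\Pi$ isometric to $M/\G$, hence compact. Write $n=\cohom(M,\G)=\dim\Sigma$. Since $\Sigma$ is complete of constant curvature $-1$ we have $n\ge 2$, the universal cover of $\Sigma$ is $\QH^{n}$, and the $\Pi$--action on $\Sigma$ (properly discontinuous, by \pref{genproperties}) lifts to a properly discontinuous isometric action of a discrete group $\hat\Pi$ on $\QH^{n}$ with $\QH^{n}/\hat\Pi\cong\Sigma/\Pi=C$ compact. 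Thus $\hat\Pi$ is a cocompact lattice in $\Iso(\QH^{n})$ with $n\ge 2$, hence non-elementary, so for fixed $\tilde p,\tilde q\in\QH^{n}$ its orbit-counting function grows exponentially:
\begin{equation*}
\#\{\,w\in\hat\Pi\ :\ d(\tilde p,w\tilde q)\le R\,\}\ \ge\ e^{\,cR}\qquad\text{for some }c>0\text{ and all large }R.
\end{equation*}

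Now fix generic regular points $p,q\in C$, with $p$ not a focal point of $\G q$, and let $F$ be the space of paths in $M$ from $p$ to the orbit $\G q=\G/\H$, i.e.\ the homotopy fibre of $\G q\hookrightarrow M$. As in the proof of \tref{elliptic}, the critical points of the energy $E$ on $F$ are exactly the non-degenerate geodesics of $\Sigma$ from $p$ to $\Pi\cdot q$; lifting to $\QH^{n}$, they are indexed by $\hat\Pi$, the critical point $\gamma_{w}$ having length $d(\tilde p,w\tilde q)$ and, by the Morse index theorem, index $\sum_{j}(c_{w,j}-1)$, where $c_{w,j}$ is the codimension of the orbit type met at the $j$-th wall crossed by $\gamma_{w}$. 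There is no conjugate-point contribution, since $\QH^{n}$ has none; and each term $c_{w,j}-1\ge 1$, since in a Coxeter polar manifold every wall of $C$ consists of singular orbits, which have codimension $\ge 2$. Because consecutive wall crossings of a geodesic of $\Sigma$ lie within $\diam(C)$ of one another, the index of $\gamma_{w}$ is at least $d(\tilde p,w\tilde q)/\diam(C)-1$; conversely, a volume comparison in the cocompact tiling of $\QH^{n}$ by copies of $C$ bounds the number of wall crossings — hence the index — linearly in $d(\tilde p,w\tilde q)$. Together with the orbit count this gives
\begin{equation*}
\#\{\,\text{critical points of }E|_{F}\text{ of index }\le k\,\}\ \ge\ e^{\,c'k}\qquad\text{for some }c'>0\text{ and all large }k.
\end{equation*}
Granting — and this is the crux — that this implies exponential growth of $\sum_{i\le k}\dim H_{i}(F;\mathbb{F})$ for some field $\mathbb{F}$, the proof concludes as follows: after the reductions in the proof of \tref{elliptic} that arrange $\G/\H$ simply connected (pass to $\SU(N)\times_{\G}M$, or to a finite cover of $\G/\H$), the Serre spectral sequence of the fibration $\Omega M\to F\to\G/\H$, together with finiteness of $H_{*}(\G/\H;\mathbb{F})$, forces $\sum_{j\le k}\dim H_{j}(\Omega M;\mathbb{F})$ to grow exponentially as well. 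Taking $\mathbb{F}=\mathbb{Q}$, this contradicts rational ellipticity of $M$; hence $M$ is rationally hyperbolic.

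The main obstacle is exactly the granted step. In \tref{elliptic} only the trivial bound $b_{i}(F)\le c_{i}(F)$ of Betti numbers by numbers of critical points is used, and a polynomial count of critical points yields a polynomial bound on Betti numbers for free; here one needs a Morse-theoretic \emph{lower} bound, that is, one must rule out large-scale cancellation in the Morse complex of $E|_{F}$. This is presumably why the statement is only conjectural. Three plausible routes: (i) a linking/minimax argument exhibiting non-trivial homology in the energy sublevel sets $F^{\le E}$, exploiting the exponential growth of $\hat\Pi=\pi_{1}^{\mathrm{orb}}(C)$ and the rigid, group-indexed structure of the critical set; (ii) an extension to polar metrics of Gromov's comparison between the number of short geodesics and the loop-space Betti numbers, using that all relevant geodesics lie in the single section $\Sigma$ so that cancellation can be tracked; (iii) assuming $M$ rationally elliptic and deriving a contradiction from the exponential geodesic count via finiteness of $\pi_{*}(M)\otimes\mathbb{Q}$ — though finiteness of rational homotopy does not by itself preclude cancelling critical points, so (iii) seems least likely to work alone.
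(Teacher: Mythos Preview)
This statement is posed as a \emph{conjecture} in the paper; there is no proof to compare against. What the paper offers immediately after stating it is a verification in a special family of examples only, via the lacunary principle: for $M=\Sph^{n}\times\Sph^{n}\#\Sph^{n}\times\Sph^{n}$ under $\SO(n)\times\SO(n)$, every stratum other than the principal one and the fixed points has codimension exactly $n$, so every Morse index of $E$ on the path space is a multiple of $n-1$. When $n\ge 3$ the resulting index gaps make $E$ a perfect Morse function, and the exponential critical-point count becomes an exponential Betti-number count with no cancellation argument needed at all.

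Your general setup is correct --- the indexing of critical points by the cocompact lattice $\hat\Pi\subset\Iso(\QH^{n})$, the exponential orbit count, and the linear upper bound on the index in terms of length are all sound (though ``volume comparison'' is not quite the right name for that last step; it is really a positive lower bound on the length of each in-chamber segment for generic $p,q$). More importantly, you have identified exactly the obstruction that keeps the statement open: without a codimension constraint the indices fill out all large integers, and nothing you have written rules out wholesale cancellation in the Morse complex. That is the genuine gap, and you are right that it is the reason the paper leaves this as a conjecture. None of your three proposed routes coincides with the paper's device; you should add the lacunary principle as a fourth, since it settles the conjecture outright whenever all wall codimensions are congruent modulo some $d\ge 2$ --- a hypothesis that already covers many examples beyond the one the paper treats, but which of course fails in general.
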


This can be verified for many such manifolds using the ideas in the
proof of \tref{elliptic}. For example, recall that $M=\Sph^n \times
\Sph^n \# \Sph^n \times \Sph^n$ supports a polar action by
$\SO(n)\times\SO(n)$ with section
 $\Sph^1\times\Sph^1 \#
 \Sph^1\times\Sph^1$ of constant negative curvature.
The codimension of any stratum (neither principal nor fixed points)
is $n$. Thus the index of the geodesics are all multiples of $n-1$
and if  $ n\ge 3$  the lacunary principle shows that the energy
function on the loop space is a perfect Morse function. This easily
implies that the Betti numbers grow exponentially. The same argument
applies to arbitrarily many connected sums of $\Sph^n \times \Sph^n$
and hence provides a geometric proof of the well known fact that
such manifolds are rationally hyperbolic.

\bigskip

\providecommand{\bysame}{\leavevmode\hbox
to3em{\hrulefill}\thinspace}

\end{document}